\documentclass[11pt,a4paper]{article}
\usepackage{amsfonts,amsmath,amssymb}
\usepackage{hyperref}
\usepackage{latexsym,fullpage,amsfonts,amssymb,amsmath,amscd,graphics,epic}
\usepackage[all]{xy}
\usepackage{amssymb,amsbsy,amsthm,amsxtra}
\usepackage[usenames]{color}
\usepackage{amscd}
\usepackage{amsthm}
\usepackage{amsfonts}
\usepackage{amssymb}
\usepackage{mathrsfs}
\usepackage{url}
\usepackage{bbm}
\usepackage{wasysym}
\usepackage{enumitem}
\usepackage{framed}
\usepackage{graphicx}

\theoremstyle{plain}% default
\newtheorem*{theorem*}{Theorem}
\newtheorem*{remark*}{Remark}
\newtheorem*{example*}{Example}
\newtheorem{lemma}{Lemma}[subsection]
\newtheorem{proposition}[lemma]{Proposition}
\newtheorem{corollary}[lemma]{Corollary}
\newtheorem{theorem}[lemma]{Theorem}
\newtheorem{conjecture}{Conjecture}
\newtheorem*{conjecture*}{Conjecture}

\newtheorem{sublemma}[lemma]{Sublemma}

\newtheorem{introtheorem}{Theorem}

\theoremstyle{definition}
\newtheorem{definition}[lemma]{Definition}
\newtheorem*{definition*}{Definition}

\newtheorem{example}[lemma]{Example}

\theoremstyle{remark}
\newtheorem{remark}[lemma]{Remark}

\newtheorem{notation}[lemma]{Notation}

\newtheorem{introremark}[introtheorem]{Remark}

\parindent=8mm
\frenchspacing

\newcommand{\Hom}{\operatorname{Hom}}
% \newcommand{\oH}{\operatorname{H}}
%\newcommand{\H}{\operatorname{H}}

% \newcommand{\kk}{{\bf K}}
% \newcommand{\zz}{\mathbb{Z}}
% \newcommand{\nn}{\mathbb{N}}
% \newcommand{\cc}{\mathbb{C}}
% %\newcommand{\qq}{\mathbb{Q}}$c(D,\psi)$
% \newcommand{\rr}{\mathbb{R}}
% \newcommand{\disc}{\Delta}
% \newcommand{\modulus}{\delta}
% \newcommand{\aaa}{\mathfrak{a}}
% \newcommand{\aut}{\it{A}}
% \newcommand{\A}{\mathbb{A}}
% \newcommand{\I}{\mathbb{I}}
% \newcommand{\eps}{\varepsilon}
% \newcommand{\vol}{\operatorname{vol}}
% \newcommand{\Ht}{H}
% \newcommand{\schwartzp}{C_c^\infty(M_n(F))}
% \newcommand{\schwartz}{C_c^\infty(H_n(E/F))}
% \newcommand{\diff}{\mathfrak{d}}
\newcommand{\triv}{{\mathbbm 1}}

\newcommand{\Ind}{\operatorname{Ind}}
\newcommand{\id}{\operatorname{Id}}

\renewcommand{\Im}{\operatorname{Im}}
\newcommand{\Ker}{\operatorname{Ker}}
\newcommand{\Ext}{\operatorname{Ext}}

\newcommand{\End}{\operatorname{End}}

\newcommand{\fk}{{\mathbbm{k}}}
\newcommand{\Z}{{\mathbb Z}}

\newcommand{\eps}{{\varepsilon}}

% \newcommand{\Fre}{{Fr\'{e}chet \,}}
% \newcommand{\lcctv}{{locally convex complete topological vector }}

% \newcommand{\cA}{{\it{A}}}
% \newcommand{\cB}{{\it{B}}}
% \newcommand{\cU}{{\it{C}}}
% \newcommand{\cD}{{\it{D}}}
% \newcommand{\cE}{{\it{E}}}
% \newcommand{\cF}{{\it{F}}}
% \newcommand{\cG}{{\it{G}}}
% \newcommand{\cH}{{\it{H}}}
% \newcommand{\cS}{{\it{S}}}

% \newcommand{\tG}{{\widetilde{G}}}
% \newcommand{\tH}{{\widetilde{H}}}
% \newcommand{\tO}{{\widetilde{O}}}

%
% \newcommand{\et}{{\'{e}tale }}
% \newcommand{\Fou}{{\it{F}}}
% \newcommand{\abs}[1]{\left|{#1}\right|}
% \newcommand{\norm}[1]{\lVert#1\rVert}
% \newcommand{\sprod}[2]{\left\langle#1,#2\right\rangle}
% %\newcommand{\cU}{{C_c^{\infty}}}
% \newcommand{\g}{{\mathfrak{g}}}
% \newcommand{\fb}{{\mathfrak{b}}}
% \newcommand{\fg}{{\mathfrak{g}}}
% \newcommand{\fh}{{\mathfrak{h}}}

% \newcommand{\fk}{{\mathfrak{k}}}
% \newcommand{\fv}{{\mathfrak{v}}}
% \newcommand{\h}{{\mathfrak{h}}}
% \newcommand{\z}{{\mathfrak{z}}}
% \newcommand{\cgot}{{\mathfrak{c}}}
% \newcommand{\Supp}{\mathrm{Supp}}
% \newcommand{\uG}{{\underline{G}}}
% \newcommand{\uH}{{\underline{H}}}
% \newcommand{\uX}{{\underline{X}}}
% \newcommand{\uU}{{\underline{U}}}
% \newcommand{\uV}{{\underline{V}}}
% \newcommand{\uW}{{\underline{W}}}
% \newcommand{\uY}{{\underline{Y}}}
% \newcommand{\uZ}{{\underline{Z}}}
% \newcommand{\gd}{\g^{\sigma}}
% \newcommand{\oF}{{\overline{F}}}
% \newcommand{\co}{{\it{O}}}
% \newcommand{\GL}{\operatorname{GL}}
% \newcommand{\SL}{\operatorname{SL}}
\newcommand{\gl}{\mathfrak{gl}}
% \newcommand{\sll}{{\mathfrak{sl}}}
% \newcommand{\HC}{\operatorname{HC}}
% \newcommand{\Fr}{\operatorname{Fr}}
% \newcommand{\Mp}{\operatorname{Mp}}
% %\newcommand{\Aut}{\operatorname{Aut}}
% \newcommand{\Sym}{\operatorname{Sym}}
% \newcommand{\Spec}{\operatorname{Spec}}
% \newcommand{\Sc}{{\it S}}
% \newcommand{\Lie}{\operatorname{Lie}}
% \newcommand{\Gr}{\operatorname{Gr}}
% \newcommand{\Coker}{\operatorname{Coker}}
% %\newcommand{\ad}{\operatorname{ad}}
% \newcommand{\Ad}{\operatorname{Ad}}
% \newcommand{\sign}{\operatorname{sign}}
% \newcommand{\on}{\overline{n}}
% \newcommand{\n}{\mathfrak{n}}
% \newcommand{\ou}{\overline{u}}
% \newcommand{\hot}{\widehat{\otimes}}
% \newcommand{\ctp}{\widehat{\otimes}}
% \newcommand{\wO}{\widetilde{\Omega}}
% \newcommand{\cM}{\it{M}}
% \newcommand{\gN}{\mathfrak{N}}
% \newcommand{\cP}{\it{P}}
% \newcommand{\Chi}{\mathfrak{X}}
% \newcommand{\ot}{\leftarrow}
% \newcommand{\fS}{\mathfrak{S}}
% \newcommand{\fJ}{\mathfrak{J}}
%
% \newcommand{\fri}{\mathfrak{i}}
% \newcommand{\fp}{\mathfrak{p}}
% \newcommand{\fl}{\mathfrak{l}}
% \newcommand{\fm}{\mathfrak{m}}
% \newcommand{\fq}{\mathfrak{q}}
% \newcommand{\fr}{\mathfrak{r}}
% \newcommand{\fs}{\mathfrak{s}}
% \newcommand{\ft}{\mathfrak{t}}
% \newcommand{\fu}{\mathfrak{u}}
% \newcommand{\cJ}{\it{J}}
% \newcommand{\cT}{\it{T}}
% \newcommand{\cV}{\it{V}}
% \newcommand{\tP}{\widetilde{P}}
% \newcommand{\tQ}{\widetilde{Q}}
% %\newcommand{\oD}{\overline{D}}
% %\newcommand{\oE}{\overline{E}}
% %\newcommand{\oPhi}{\overline{\Phi}}
% \newcommand{\oD}{\widetilde{D}}
% \newcommand{\oE}{\widetilde{E}}

\newcommand{\ad}{\operatorname{ad}}

\newcommand{\abs}[1]{\left|{#1}\right|}

\newcommand{\g}{\mathfrak{g}}

\newcommand{\T}{\mathcal{T}}

\newcommand{\cU}{\mathcal{U}}

\newcommand{\G}{\mathbb{G}_a}

 % exact
 % faithful

\newcommand{\Res}{\mathrm{Res}}

\newcommand{\Vect}{\mathtt{Vect}}
\newcommand{\sVect}{\mathtt{sVect}}
\newcommand{\Rep}{\mathrm{Rep}}

\newcommand{\osp}{\mathfrak{osp}}

\newcommand{\comment}[1]

\newcommand{\InnaA}[1]{{#1}} 
 
\newcommand{\InnaC}[1]{{#1}}
\newcommand{\InnaD}[1]{{#1}}
\newcommand{\VeraA}[1]{{#1}}
\def\quotient#1#2{%
    \raise1ex\hbox{$#1$}\Big/\lower1ex\hbox{$#2$}%
}

\begin{document}

\date{\today}
\title{Jacobson-Morozov Lemma for Algebraic Supergroups}
 \author{Inna Entova-Aizenbud\footnote{Inna Entova-Aizenbud, Dept. of Mathematics, Ben Gurion University,
Beer-Sheva,
Israel; email: \href{mailto:entova@bgu.ac.il}{entova@bgu.ac.il}.}, Vera Serganova\footnote{Vera Serganova, Dept. of Mathematics, University of California at
Berkeley,
Berkeley, CA 94720; email: \href{mailto:serganov@math.berkeley.edu}{serganov@math.berkeley.edu}.}}

\maketitle
\setcounter{tocdepth}{3}
\begin{abstract}
 
 Given a quasi-reductive algebraic supergroup $G$, we use the theory of 
semisimplifications of symmetric monoidal categories to define a symmetric 
monoidal functor $$\Phi_x: \Rep(G) \to \Rep(OSp(1|2))$$ associated to any given 
element $x \in \mathrm{Lie}(G)_{\bar 1}$. For nilpotent elements $x$, we show 
that the functor $\Phi_x$ can be defined using the Deligne filtration associated 
to $x$. 
 
 We use this approach to prove an analogue of the Jacobson-Morozov Lemma for algebraic supergroups. Namely, we give a necessary and sufficient condition on odd nilpotent elements $x\in \mathrm{Lie}(G)_{\bar 1}$ which define an embedding of supergroups $OSp(1|2)\to G$ so that $x$ lies in the image of the corresponding Lie algebra homomorphism. 
\end{abstract}
\setcounter{tocdepth}{3}
%\tableofcontents
\section{Introduction}\label{sec:intro}

\subsection{}

Let $\fk$ be an algebraically closed field of characteristic zero.

In the classical setting, one has the following version of the Jacobson-Morozov lemma: given an embedding of algebraic groups
$\G \to L$, where $L$ is reductive and $\G$ is the additive group, one can extend this homomorphism to a homomorphism $SL_2 \to L$.

The latter homomorphism is unique up to conjugation by an element of $L$. 

The embedding $\G \to L$ of course corresponds to a choice of nilpotent element in $\mathrm{Lie}(L)$. This provides the Jacobson-Morozov lemma for Lie algebras: every nilpotent element in a semisimple Lie algebra $\mathfrak{l}$ can be embedded into an
$\mathfrak{sl}_2$-subalgebra of $\mathfrak{l}$, and this embedding is unique up to conjugation by an element of $L$. 

In this paper, we extend this result to the case of a quasi-reductive algebraic supergroup $G$ (here "quasi-reductive" means that the even part $G_{\bar 0}$ of the supergroup $G$ is a reductive algebraic group).

The additive group $\G$ will then be replaced by a supergroup $\G^{(1|1)}$ whose Lie superalgebra $\g_a^{(1|1)}$ is a nilpotent Lie superalgebra, spanned by an odd nilpotent element $x$ and its commutator $[x,x]$. 

Fix an algebraic supergroup $G$ with corresponding Lie superalgebra $\g = \mathrm{Lie}(G)$ and an odd nilpotent element $y \in \g_{\bar 1}$. We then have an homomorphism $\G^{(1|1)} \to G$ of algebraic supergroups, whose differential $\g_a^{(1|1)} \to \g_{\bar 1}$ sends $x$ to $y$.

The group $SL_2$ will be replaced by the supergroup\footnote {\InnaC{We apologize for assuming connectedness of $OSp(1|2)$ to avoid the 
awkward notation $SOSp(1|2)$.}} $OSp(1|2)$, the latter being one of a few algebraic supergroups whose
category of finite-dimensional representations is semisimple. The group $\G^{(1|1)}$ embeds into $OSp(1|2)$, and $\mathfrak g_a^{(1|1)}$ is isomorphic to a maximal nilpotent subalgebra of $\mathfrak{osp}(1|2)$.

% The Lie superalgebra $\mathfrak{osp}(1|2)$ has a unique up to conjugation
% odd nilpotent element $x$ and the degree of nilpotency of $x$ in any irreducible representation of $\mathfrak{osp}(1|2)$ is odd.

% Let $G$ be a quasi-reductive algebraic supergroup, $\mathfrak g=\g$ and $x\in\mathfrak g$ be a nilpotent odd element. Assume that $[x,x]\neq 0$. Then $[x,x]$ and $x$ span a $(1|1)$-dimensional
% subalgebra $\mathfrak {g}^{(1|1)}_a$ of $\mathfrak g$. 

One should state right away that the situation here is trickier than in the classical setting: we do not expect an embedding $\G^{(1|1)} \to G$ (corresponding to a choice of an odd nilpotent element  in $\g$) to necessarily give an embedding $OSp(1|2) \to G$.
Indeed, the irreducible finite-dimensional representations of $OSp(1|2)$ have 
categorical dimension (also called ``superdimension'') $\pm 1$,  therefore we 
must have that the restriction of \InnaC{every} finite-dimensional $G$-module to 
$\G^{(1|1)}$ is a 
direct sum of indecomposable $\G^{(1|1)}$-modules of categorical dimension $\pm 1$. 

To state our main result, we will use the following definition:

\begin{definition*}
Let $V$ be a finite-dimensional vector superspace with an action of $\G^{(1|1)}$ on it.
  
The action is called {\it neat} if as a $\G^{(1|1)}$-module, all the indecomposable summands of $V$ have non-zero categorical dimension (``superdimension'').
\end{definition*}
The action of $\G^{(1|1)}$ on $V$ is completely determined by the odd nilpotent operator $x \in \End(V)$. The operator $x$ is called {\it neat} if it defines a neat action of $\G^{(1|1)}$ on $V$. 

Our main result is the following ``odd'' version of Jacobson-Morozov Lemma in the superalgebra setting (see Theorem \ref{thrm:JM}):
\begin{introtheorem}\label{introthrm:JM}
 
 Let $G$ be a quasi-reductive algebraic supergroup, and $\g=Lie(G)$ its Lie 
superalgebra. Let $x \in \g_{\bar 1}, x\neq 0$ be a nilpotent element such that 
$x\rvert_V$ is neat, for \InnaC{every} finite dimensional (algebraic) 
representation $V$ of $G$. 
 
 Let $i:\G^{(1|1)} \hookrightarrow G$ be the homomorphism of algebraic supergroups corresponding to the inclusion $x \in \g$.
 
 Then the inclusion $i$ can be extended to an injective homomorphism $\bar{i}: OSp(1|2) \hookrightarrow G$.
\end{introtheorem}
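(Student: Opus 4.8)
The plan is to feed the symmetric monoidal functor $\Phi_x\colon \Rep(G)\to\Rep(OSp(1|2))$, constructed earlier from the Deligne filtration attached to the nilpotent $x$, into Tannakian reconstruction for affine supergroup schemes. Recall that a homomorphism $OSp(1|2)\to G$ is the same datum as a $\fk$-linear symmetric monoidal functor $\Rep(G)\to\Rep(OSp(1|2))$ intertwining the two fiber functors to $\sVect$; under the identifications $G\cong\underline{\Aut}^{\otimes}(\omega_G)$ and $OSp(1|2)\cong\underline{\Aut}^{\otimes}(\omega_{OSp(1|2)})$, such a functor induces the desired $\bar i$ on automorphism supergroup schemes. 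So the first task is to check that $\Phi_x$ is fiber-functor compatible, i.e. that $\omega_{OSp(1|2)}\circ\Phi_x\cong\omega_G$ as functors to $\sVect$. This is exactly where neatness enters: a $\G^{(1|1)}$-indecomposable is a single Jordan block $J_m$ of the odd operator $x$, and $\sdim J_m$ vanishes precisely when $m$ is even, while $\sdim J_m=\pm 1$ when $m$ is odd. Hence neatness of $x\rvert_V$ for all $V$ says that no summand of any $V\rvert_{\G^{(1|1)}}$ is negligible, so the semisimplification/associated-graded construction underlying $\Phi_x$ discards nothing and $\dim\Phi_x(V)=\dim V$, giving the required natural identification of underlying superspaces.

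Granting this, Tannakian reconstruction produces $\bar i\colon OSp(1|2)\to G$, and the second task is to show $\bar i$ extends $i$, i.e. that $\Res^{OSp(1|2)}_{\G^{(1|1)}}\circ\Phi_x$ agrees with the restriction functor $i^{*}$ attached to $x$. Since $\G^{(1|1)}$ is a connected unipotent supergroup determined by the single odd element $x$, it suffices (in characteristic zero) to compare differentials and to verify that the principal odd nilpotent of $\osp(1|2)$, which generates $\g_a^{(1|1)}\subset\osp(1|2)$, is carried by $d\bar i$ to $x\in\g$. Concretely, a Jordan block $J_m$ with $m$ odd is already isomorphic to the associated graded of its own Deligne filtration, so under neatness the $\G^{(1|1)}$-module underlying $\Phi_x(V)$ is isomorphic to $V\rvert_{\G^{(1|1)}}$, naturally in $V$; this identifies $\bar i\rvert_{\G^{(1|1)}}$ with $i$.

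Finally, for injectivity I would invoke the Tannakian criterion: $\bar i$ is a closed embedding iff every object of $\Rep(OSp(1|2))$ is a subquotient of $\Phi_x(V)$ for some $V\in\Rep(G)$. As $\Rep(OSp(1|2))$ is semisimple, this amounts to asking that every irreducible occur as a summand of some $\Phi_x(V)$. The Deligne-filtration construction sends a Jordan block $J_{2k+1}$ to the irreducible $OSp(1|2)$-module $L_k$ of dimension $2k+1$; since $x\neq 0$, some $V_0$ carries a block of size $\geq 2$, hence by neatness a block of odd size $\geq 3$, so $\Phi_x(V_0)$ contains a nontrivial irreducible. Because $\Phi_x$ is monoidal and a single nontrivial irreducible tensor-generates $\Rep(OSp(1|2))$ — the $\osp(1|2)$ Clebsch--Gordan rule gives $L_k\otimes L_k=\bigoplus_{j=0}^{2k}L_j$, so tensor powers produce every $L_j$ as a summand — every irreducible appears in the image of $\Phi_x$, and $\bar i$ is injective.

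The main obstacle I expect is not the injectivity step, which is a routine Tannakian-plus-tensor-generation argument, but reconciling the abstractly defined $\Phi_x$ — which by construction passes to an associated graded, a priori changing the module — with honest representation theory: one must establish both the fiber-functor compatibility and that $\bar i$ restricts on the nose to $i$. Neatness is precisely the hypothesis guaranteeing that the associated graded loses no dimension and remains isomorphic to the original $\G^{(1|1)}$-module, so these compatibilities hold; pinning down the naturality of that identification in $V$ is the delicate point.
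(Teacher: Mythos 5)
Your overall strategy --- produce $\bar i$ from $\Phi_x$ via Tannakian reconstruction --- is the same as the paper's, but at the two places where the paper does real work your argument has a gap, and both instances are the same unproven claim: that the object-wise isomorphisms behind ``$\Phi_x$ discards nothing'' assemble into a \emph{natural symmetric monoidal} isomorphism. For reconstruction you need an isomorphism of functors $\omega_{OSp(1|2)}\circ\Phi_x\cong\omega_G$ (equivalently, exactness of $\Phi_x$), and for the extension property you need an isomorphism of functors $\Res^{OSp(1|2)}_{\G^{(1|1)}}\circ\Phi_x\cong i^*$. What neatness gives you for free is only that for each single $V$ the superspace underlying $\Phi_x(V)$ has the same dimension as $V$, and that $Gr(V\rvert_{\G^{(1|1)}})\cong V\rvert_{\G^{(1|1)}}$ as $\G^{(1|1)}$-modules. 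These isomorphisms depend on choosing a splitting of the Deligne filtration of each $V$; morphisms of $G$-modules do not preserve chosen splittings (and a priori need not even be strictly compatible with the filtrations), so a per-object dimension count does not produce the required natural transformation, let alone a monoidal one. A posteriori such an isomorphism exists --- it follows from the theorem --- but obtaining it directly amounts to a splitting theorem for filtered fiber functors, which is genuinely nontrivial; you flag this yourself as ``the delicate point'' and then assert rather than prove it. A telltale symptom: your proposal never uses quasi-reductivity of $G$, which is exactly the hypothesis the paper uses to close this gap.

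Concretely, the paper proves exactness of $\Phi_i=S\circ R$ with no naturality claim at all: quasi-reductivity gives enough projectives in $\Rep(G)$ (Proposition \ref{prop:enough_proj}); neatness gives $\Phi_i(P)\neq 0$ for projective $P\neq 0$; tensoring a short exact sequence with $P$ splits it, and faithful exactness of $\Phi_i(P)\otimes -$ in the tensor category $\Rep(OSp(1|2))$ then forces exactness of $\Phi_i$. Tannakian formalism yields some $\varphi:OSp(1|2)\to G$ with $\Phi_i\cong R_\varphi$, but --- precisely because the isomorphism $S\circ S^*\circ R_\varphi\cong \Phi_i$ cannot be cancelled against the non-faithful functor $S$ --- the paper does \emph{not} claim that the restriction of $\varphi$ to $\G^{(1|1)}$ has restriction functor isomorphic to $i^*$. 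Instead it compares the two Deligne filtrations on $\g$ itself, finding an automorphism matching them (Lemma \ref{lem:aux1}), and proves that two neat elements inducing the same filtration on $\g$ are $G_{\bar 0}$-conjugate by an open-orbit argument (Lemma \ref{lem:aux2}); replacing $\varphi$ by $\gamma\varphi$ then gives an extension of $i$ on the nose. Your injectivity argument via the subquotient criterion and Clebsch--Gordan tensor generation is correct (the paper simply appeals to simplicity of $OSp(1|2)$), and the fact that Tannakian arguments only identify homomorphisms up to conjugacy is harmless since one may conjugate $\bar i$; but the two central steps of your proposal rest on the unestablished naturality claim, so as written the proof does not go through.
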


Moreover, we give a simple criterion to check that an element $x \in \g_{\bar{1}}$ is neat: given a quasi-reductive algebraic supergroup $G$, a faithful representation $V$ of $G$, and $x\in \g_{\bar{1}} \setminus\{0\}$, we prove that the operator $x\rvert_V$ is nilpotent and neat iff $x$ satisfies the conditions of Theorem \ref{introthrm:JM}.

\begin{introremark}
{As it was stated above, the neatness of $x$ is also a necessary condition in order for the homomorphism $\bar{i}$ extending $i$ to exist.}
\end{introremark}

Let us give a short overview of our main tool for proving Theorem \ref{introthrm:JM}. 

Consider the category $\Rep(\G^{(1|1)})$. Using the theory of semisimplification, we show that there exists a (non-exact) $\fk$-linear full symmetric monoidal functor $$S: \Rep(\G^{(1|1)}) \to \Rep(OSp(1|2))$$ making $\Rep(OSp(1|2))$ the universal semisimple quotient of $\Rep(\G^{(1|1)})$. The functor $S$ annihilates all indecomposable $\G^{(1|1)}$-representations of superdimension zero.

Now, let us go back to the setting of Theorem \ref{introthrm:JM}. Consider the restriction functor $R_x: \Rep(G) \to \Rep(\G^{(1|1)})$. Setting $$\Phi_x := S \circ R_x: \,  \Rep(G) \longrightarrow \Rep(OSp(1|2)),$$ we show that this functor is an exact symmetric monoidal functor between super-Tannakian categories. Therefore it defines a homomorphism $\bar{i}$ as in Theorem \ref{introthrm:JM}.

Another construction of $\Phi_x$ is as follows. Let $M \in \Rep(G)$, and consider the action of $x$ on it. This action defines a canonical finite increasing ``Deligne'' filtration
$$\ldots\subset\mathcal F^{i}(M)\subset \mathcal F^{i+1}(M)\subset\ldots $$ satisfying conditions similar to that of the classical Deligne filtration appearing in the Hodge theory.

Let $Gr^{i}(M) = \mathcal F^{i}(M) / \mathcal F^{i-1}(M)$. Since $x$ is neat, we have: $Gr^{2i+1} (M) =0$ for all $i$. Then the grading and the action of $x$ extends uniquely to an  action of $OSp(1|2)$ on $\bigoplus_i Gr^{2i} (M)$ and an isomorphism of
$OSp(1|2)$-modules $$\bigoplus_i Gr^{2i} (M) \cong \Phi_x(M).$$

The nilpotent elements in $\g_{\bar 1}$ satisfying the condition on Theorem \ref{introthrm:JM} are called {\it neat} elements, and the set of such elements is denoted $\g_{neat}$.

We initiate the study of the set $\g_{neat}$; it is stable under the adjoint action of $G_{\bar 0}$ and if $G$ is quasi-reductive, we show that $\g_{neat}$ has finitely many $G_{\bar 0}$-orbits, and $\g_{\bar 1} = \g_{neat}$ iff $\Rep(G)$ is semisimple.

The construction of the functor $\Phi_x$ using semisimplification is inspired by \cite{EO}, and can be extended to arbitrary odd elements in the Lie superalgebra $\g$. Each element $x\in \g_{\bar 1}$ defines a functor $\Phi_x : \,  \Rep(G) \longrightarrow \Rep(OSp(1|2))$ in a similar manner.

This allows one to define the notion of {\it support} of a module $M \in \Rep(G)$ as the subset $${supp(M):=\{x \in \g_{\bar 1} | \Phi_x(M)\neq 0\}.}$$ We study supports of modules in Section \ref{sec:tensor_func}. We show that the minimal support a module can have is $\g_{neat}$, and this occurs when $M$ is projective. 

\subsection{Acknowledgement}
We would like to thank Joseph Bernstein, Pavel Etingof and Victor Ostrik for helpful discussions, \InnaC{and Kevin Coulembier for helpful remarks}. The authors were supported by the NSF-BSF grant NSFMath 2019694.

\section{Notation}\label{sec:notation}

Our base field will be an algebraically closed field $\fk$ with $char(\fk) = 0$.
\subsection{Tensor categories and vector superspaces}\label{ssec:symmetric monoidal_cat}

All our categories will be $\fk$-linear rigid symmetric monoidal, with the bifunctor $-\otimes-$ being bilinear. All the functors will be symmetric monoidal and $\fk$-linear.

We will write SM for short when refering to {\it symmetric monoidal} (both categories and functors).

Throughout the paper, we will use the following terminology and assumptions, following \cite{EGNO}:

\begin{itemize}
 \item 
 A {\it tensor} category is an abelian rigid SM $\fk$-linear 
category, where $-\otimes-$ is biexact\footnote{In fact, this follows from bilinearity of $-\otimes-$.}. A {\it tensor} functor between tensor categories is an exact SM functor.

\item We will assume that $\End(\triv) = \fk$ in all our categories.

\item In a rigid SM category $\cU$, one defines the {\it trace} of $f \in \End(C)$, $C \in \cU$ as $tr(f) \in \End(\triv)$ where
$$tr(F): \triv \to C \otimes C^* \xrightarrow{f \otimes \id_{C^*}} C\otimes C^* 
\xrightarrow{b_{C, C^*}} C^* \otimes C \to \triv.$$
 The (categorical) {\it dimension} of an object $C$ is then defined as $\dim(C) =tr(\id_C)
\in \End(\triv)$.

A morphism $f:C_1 \to C_2$ in $\cU$ is called {\it negligible} if it satisfies the following condition:
$$ \forall g: C_2 \to C_1, \; tr(g \circ f) = 0.$$

The set $\mathcal{N}$ of all negligible morphisms in a rigid SM $\fk$-linear $\cU$ forms an ideal under composition and tensor product, hence $\cU / \mathcal{N}$ is again a rigid SM $\fk$-linear category.
\item {The} {\it semisimplification} of a rigid SM $\fk$-linear category $\cU$ is the pair $(S, \overline{\cU})$, where $\overline{\cU} = \cU /\mathcal{N} $, and $S: \cU \to \overline{\cU}$ is the quotient functor. 

One can immediately see that
$\overline{\cU}$ is a semisimple rigid SM $\fk$-linear category, and $S: \cU \to \overline{\cU}$ is a full SM $\fk$-linear functor. 
In fact, the pair $(S, \overline{\cU})$ is universal among pairs, cf. for example \cite{AndreKahn, EO}, and also \cite{Heid, BEEO}.
\end{itemize}

Two {important} examples of rigid SM categories are the 
category of 
finite-dimensional representations of a group $G$ and the category of 
finite-dimensional vector superspaces $\sVect$, defined below.

\subsection{Vector superspaces and supergroups}\label{ssec:supervec}

A {\it 
vector superspace} is a $\mathbb{Z}/2\mathbb{Z}$-graded $\fk$-vector space 
$V=V_{\bar 0}\oplus V_{\bar 1}$; for $v\in V_{\eps}$, $\eps\in  
\mathbb{Z}/2\mathbb{Z}$, we denote by $\bar{v} = \eps$ the {\it parity} of $v$. 
{Given two superspaces $V, W$, the space $\Hom_{\fk}(V, W)$ is} {naturally} 
$\Z/2\Z$-graded {as well}, with 
homogeneous morphisms called {\it even} and {\it odd} respectively. 

{The objects in the category of vector superspaces $\sVect$ are 
finite-dimensional vector superspaces and the morphisms are linear 
even morphisms: $$\Hom_{\sVect}(V, W) = \Hom_{\fk}(V, W)_{\bar 0}.$$} 

The category $\sVect$ has a monoidal structure 
given by $(\otimes, \fk^{1|0})$, {with {the} symmetry morphisms}
$$b_{V, W}:V \otimes W \to W \otimes V,\;\; v\otimes w  \mapsto 
(-1)^{\bar{v}\bar{w}} w \otimes v$$
This makes $\sVect$ a rigid SM
category, which is {\bf not} equivalent to the {SM} category 
$\Rep(\Z/2\Z)$. 

The (categorical) dimension of $V \in \sVect$, also called 
{\it superdimension}, is then 
$\dim V = \dim V_{\bar{0}} - 
\dim V_{\bar 1}$. {Sometimes we also denote the dimension of $V$ as a vector space by $(\dim V_{\bar{0}} | \dim V_{\bar{1}})$.

We will denote by $\Pi$ the change of parity endofunctor on $\sVect$: namely, $\Pi \fk^{m|n} \cong \fk^{n|m}$.}

In the category $\sVect$ one can define Lie algebra objects, called (finite-dimensional) {\it Lie 
superalgebras}. An 
example of such an object is the vector superspace $\End(V)$ with a (signed) commutator bracket,
denoted $\gl(V)$. For the vector superspace $V=\fk^{m|n}$, we denote $\gl(m|n)=\gl(V)$. 

Similarly, one can consider {\it algebraic (affine) supergroups}\footnote{By ``algebraic (super)group'' we always mean an affine algebraic group (super)scheme $G$; its Hopf (super)algebra $O(G)$ is finitely generated.}. These form a category which is opposite to the category of finitely-generated commutative Hopf algebra ind-objects in $\sVect$, and each algebraic supergroup $G$ has a (finite-dimensional) Lie superalgebra $\mathrm{Lie}(G)$ attached to it.

A {\it pro-supergroup} $G$ is a limit of supergroups; its algebra of functions $O(G)$ is a Hopf superalgebra, not necessarily finitely generated.

\begin{definition}
 Given a Lie superalgebra $\g$, the category of finite-dimensional representations of $\g$ is denoted by $\Rep(\g)$ has objects $(V, \rho)$ where $V \in \sVect$ and $\rho: \g \to \gl(V)$ is a homomorphism of Lie algebra objects in $\sVect$. The maps in $\Rep(\g)$ would be $\g$-equivariant {(even)} maps of vector superspaces.
\end{definition}

\begin{definition}

Given a supergroup $G$, we define $\Rep(G)$ as the category of representations of $G$ in $\sVect$. 
\end{definition}

Let $G$ be an algebraic supergroup with Lie superalgebra $\g$ and with the underlying (affine) algebraic group $G_{\bar 0}$. A {\it $(\g,G_{\bar 0})$-module} is by definition a $\g$-module and
  $G_{\bar 0}$-module such that the differential of the $G_{\bar 0}$-action on $M$ coincides with the $\g_{\bar 0}$-action.
\begin{theorem}\label{theorem_HC} \cite{Masuoka} The category $Rep(G)$ is equivalent to the category of finite-dimensional $(G,\g_{\bar 0})$-modules. 
    \end{theorem}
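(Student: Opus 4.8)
The plan is to exhibit a pair of mutually inverse functors between $\Rep(G)$ and the category of $(\g,G_{\bar 0})$-modules, exploiting the Harish-Chandra description of $G$ as the pair $(G_{\bar 0},\g)$ equipped with its compatible adjoint action. The forward functor $\Phi:\Rep(G)\to (\g,G_{\bar 0})\text{-mod}$ is the easy direction. Given $(V,\rho)\in\Rep(G)$, I would restrict $\rho$ along the closed embedding $G_{\bar 0}\hookrightarrow G$ to obtain an algebraic $G_{\bar 0}$-module, and differentiate $\rho$ to obtain a Lie superalgebra homomorphism $d\rho:\g\to\gl(V)$, a $\g$-module structure. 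Since $\Lie(G_{\bar 0})=\g_{\bar 0}$ and differentiation commutes with restriction, the differential of the $G_{\bar 0}$-action coincides with $d\rho|_{\g_{\bar 0}}$; thus the compatibility condition of the definition holds automatically, and $\Phi$ is well-defined on objects and, trivially, on morphisms.

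The content lies in constructing the inverse $\Psi$. Here I would use the functor-of-points form of the Harish-Chandra equivalence: for a supercommutative $\fk$-algebra $A$, every element of $G(A)$ factors through $G_{\bar 0}(A)$ together with exponentials $\exp(\eta)$ of even elements $\eta\in\g_{\bar 1}\otimes A_{\bar 1}\subset(\g\otimes A)_{\bar 0}$. Given a finite-dimensional $(\g,G_{\bar 0})$-module $M$, define an action of $G(A)$ on $M\otimes A$ by letting $G_{\bar 0}(A)$ act through the base change of its algebraic action and letting $\exp(\eta)$ act by $\exp$ of the operator $d\rho(\eta)$ on $M\otimes A$. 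Equivalently, and more invariantly, one reconstructs the coaction $M\to M\otimes O(G)$ from the $G_{\bar 0}$-coaction and the $\g_{\bar 1}$-action through the decomposition $O(G)\cong O(G_{\bar 0})\otimes\Lam(\g_{\bar 1}^*)$ that holds in characteristic zero.

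The main obstacle is to verify that this prescription genuinely defines an algebraic $G$-action, that is, that it respects the group law of $G(A)$, or equivalently that the reconstructed coaction is coassociative. First one checks that $\exp(\eta)$ is well-defined: since $M$ is finite-dimensional and odd elements of $A$ square to zero, after reducing to finitely generated $A$ the operator $d\rho(\eta)$ is locally nilpotent on $M\otimes A$, so the exponential is a finite sum. Then one must show that conjugating an odd exponential by $G_{\bar 0}(A)$ and multiplying two odd exponentials produce exactly the relations dictated by the Harish-Chandra structure constants, namely the adjoint action $\Ad$ of $G_{\bar 0}$ on $\g$ and the bracket on $\g$. These are precisely the compatibilities built into a $(\g,G_{\bar 0})$-module, so the verification reduces to Baker--Campbell--Hausdorff-type identities for the exponentials together with the relation $d(\Ad)=\ad$; this is the technical heart, and it is where I expect the bookkeeping to concentrate.

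Finally, I would check that $\Phi$ and $\Psi$ are mutually inverse: $\Psi\circ\Phi$ recovers the original $G$-action because $G$ is generated by $G_{\bar 0}$ and the odd exponentials and $\Phi$ records exactly the data $(\rho|_{G_{\bar 0}}, d\rho)$ that $\Psi$ feeds back, while $\Phi\circ\Psi$ returns the original compatible pair by construction. Both functors are clearly identity on the underlying morphisms of vector superspaces, so the equivalence is full and faithful; this reproduces Masuoka's Hopf-algebraic formulation.
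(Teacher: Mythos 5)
Your statement here is one the paper does not prove at all: Theorem \ref{theorem_HC} is quoted from \cite{Masuoka}, so the only meaningful comparison is with the known proof of the Harish-Chandra correspondence. Your overall architecture is indeed the standard route (restrict-and-differentiate in one direction; reconstruct the $G$-action from the $G_{\bar 0}$-action and odd exponentials in the other), and the easy ingredients are handled correctly: the forward functor $\Phi$ is unproblematic, and $\exp(\eta)$ for $\eta\in\g_{\bar 1}\otimes A_{\bar 1}$ is a finite sum --- in fact $\eta^{n+1}=0$ with $n=\dim\g_{\bar 1}$, since any monomial in the expansion with a repeated odd coefficient vanishes, so no reduction to finitely generated $A$ is needed.

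The genuine gap is in the inverse functor. Everything rests on your claim that multiplication $G_{\bar 0}(A)\times(\g_{\bar 1}\otimes A_{\bar 1})\to G(A)$, $(g,\eta)\mapsto g\exp(\eta)$, is bijective naturally in $A$ --- equivalently, on the splitting $O(G)\cong O(G_{\bar 0})\otimes\Lambda(\g_{\bar 1}^*)$. This is not a background fact that can be waved at with ``holds in characteristic zero'': it \emph{is} the central structural theorem of the Harish-Chandra-pair theory (Koszul in the analytic setting, Masuoka in the algebraic one), and essentially the entire content of the equivalence you are asked to prove is concentrated in it. A self-contained argument would have to establish it, e.g.\ by dualizing the super-PBW factorization $U(\g)\cong U(\g_{\bar 0})\otimes\Lambda(\g_{\bar 1})$ and descending to $O(G)$, which is in effect what Masuoka does. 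Likewise, the ``BCH bookkeeping'' you explicitly defer --- that products of odd exponentials and their $G_{\bar 0}(A)$-conjugates are governed exactly by the bracket and by $\Ad$, so that your prescription respects the group law of $G(A)$ --- is precisely the verification that makes $\Psi$ well defined; promising that it works is deferring the theorem, not proving it. As written, your text is a correct roadmap that reduces the statement to an equivalent unproven structural result, rather than a proof. (A minor point in your favor: the paper's statement contains a typo, $(G,\g_{\bar 0})$-modules, where the definition preceding it and your write-up correctly use $(\g,G_{\bar 0})$-modules.)
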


\begin{remark}\label{rmk:faithful}
Any algebraic (affine) supergroup has a faithful finite-dimensional representation. This is proved in the same way as for affine algebraic groups (see e.g. \cite[Chapter 4, Par. 9]{Milne}).
\end{remark}

\begin{definition}
 A supergroup $G$ is called {\it quasi-reductive} if $G_{\bar{0}}$ is reductive\footnote{By a reductive algebraic group we mean an algebraic group whose finite-dimensional (rational) representations form a semisimple category.}. 
\end{definition}

\subsection{Tannakian categories}\label{ssec:Tannakian}
In this section we use the terminology of \cite{D}.

For a pre-Tannakian category $\T$ and a $\T$-group $G$, we have a group homomorphism $\eps:\pi(\T) \to G$, where $\pi(\T)$ denotes the fundamental group of $\T$, in the sense of \cite{D}. Define $\Rep_{\T}(G, \eps)$ to be the category of representations $M$ of $G$ in $\T$ whose composition with $\eps$ is the natural $\pi(\T)$-action on $M$, seen as an object in $\T$.

\InnaC{
Let $G$ be a $\T$-groups and let $R:\Rep_{\T}(G) \to \T$ be the forgetful functor. The image in $\T$ of the fundamental group of $\Rep_{\T}(G)$ under $R$ is then $G\rtimes\pi(\T)$ (see \cite[Appendix 2]{ES}). 

Let $G_1, G_2$ be two $\T$-groups and let $F:\Rep_{\T}(G_1) \to \Rep_{\T}(G_2) $ be an exact symmetric monoidal functor. Tannakian formalism (as described in \cite{D}) then tells us that $F$ induces a homomorphism of $\T$-groups $G_2\rtimes\pi(\T) \to G_1 \rtimes \pi(\T)$.}

If $\T = Rep(G')$, we write $\Rep_{G'}(G, \eps) = \Rep_{\T}(G, \eps)$ for short.

Deligne's theorem on super-Tannakian reconstruction states that given a finitely $\otimes$-generated pre-Tannakian category $\T$ where each object is annihilated by some Schur functor, the category $\T$ is equivalent to $\Rep_{\sVect}(G, \eps)$ for some algebraic supergroup $G$ and $\eps: \mu_2 \to G$, where $\mu_2 \cong \pi(\sVect) \cong \{\pm 1\}$ and $\eps$ is the corresponding supergroup homomorphism. \InnaC{Such categories are called {\it super-Tannakian categories}.}

\begin{remark}\label{rmk:conn_Tannakian}
 \InnaC{Let $G_1, G_2$ be supergroups, and assume $G_2$ is connected. Let $F: \Rep(G_1) \to \Rep(G_2)$ be an exact symmetric monoidal functor. Then $F$ induces a homomorphism of supergroups $f:G_2 \rtimes \mu_2 \to G_1 \rtimes \mu_2$, which restricts to a homomorphism $ f:G_2  \to G_1 \rtimes \mu_2$. But since $G_2$ is connected, $f$ is in fact a homomorphism $ f:G_2  \to G_1$.}
\end{remark}

\InnaC{The following lemma will be useful for us when considering super Tannakian categories:}
\InnaC{
  \begin{lemma}\label{lem:splittannakian} Let $\mathcal T$ be a super Tannakian category which contains $\Pi(\mathbb \triv)$ (the unit object with shifted parity). Then $\mathcal T$ is equivalent to $\Rep(G)$ for some algebraic
    pro-supergroup $G$. 
\end{lemma}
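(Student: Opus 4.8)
The plan is to reduce the statement to Deligne's super-Tannakian reconstruction and then split off the parity using the object $\Pi\triv$. First I would invoke Deligne's theorem (in its pro-version, applied to the filtered union of the finitely $\otimes$-generated tensor subcategories of $\mathcal T$, whose associated supergroups form an inverse system) to realize $\mathcal T \cong \Rep_{\sVect}(G,\eps)$ for an algebraic pro-supergroup $G$ and a homomorphism $\eps:\mu_2\to G$, where $\mu_2\cong\pi(\sVect)$. Write $z:=\eps(-1)\in G$; it is central of order dividing $2$, and by the very definition of $\Rep_{\sVect}(G,\eps)$ it acts on every object of $\mathcal T$ by the parity automorphism.

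The key step is to extract from $\Pi\triv$ a character of $G$ that detects $z$ as the parity. Let $\omega:\mathcal T\to\sVect$ be the fiber functor, so that $G$ is the tensor-automorphism group scheme of $\omega$. The object $L:=\Pi\triv$ is $\otimes$-invertible with $L^{\otimes 2}\cong\triv$, so its $G$-action is a character $\chi:G\to\mathbb{G}_m$ with $\chi^2=1$, that is $\chi:G\to\mu_2$. The decisive point is the sign of the self-braiding $b_{L,L}$: for the parity-shifted unit it equals $-\id$, which forces $\omega(L)=\fk^{0|1}$ to be \emph{odd}. Since $z$ acts by parity, it acts on $\omega(L)$ as $-1$, and therefore $\chi(z)=-1$. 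This is exactly where the hypothesis matters: it distinguishes $\Pi\triv$ from an even $\otimes$-invertible object of order two, for which $\chi(z)$ would be $+1$ and the construction below would fail.

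Now $\chi:G\to\mu_2$ is surjective with $\chi(z)=-1$, so $z$ has order exactly $2$ and $\eps:\mu_2\xrightarrow{\sim}\langle z\rangle\hookrightarrow G$ is a section of $\chi$. Since $z$ is central and $\Ker\chi\cap\langle z\rangle=1$ while $\Ker\chi\cdot\langle z\rangle=G$, the multiplication map is an isomorphism of pro-supergroups $G\cong H\times\mu_2$, where $H:=\Ker\chi$ and the $\mu_2$-factor is $\langle z\rangle$. Finally I would identify $\Rep_{\sVect}(H\times\mu_2,\eps)\cong\Rep(H)$: a representation of $H\times\mu_2$ in $\sVect$ is an $H$-representation together with a commuting $\mu_2$-action, and compatibility with $\eps$ forces this $\mu_2$-action to coincide with the parity operator; hence the data reduces to a genuine super-representation of the supergroup $H$, and the braidings match on the level of underlying superspaces. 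This yields $\mathcal T\cong\Rep(H)$ for the algebraic pro-supergroup $H$, as desired.

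The main obstacle I anticipate is the second paragraph: making rigorous that the self-braiding sign of $\Pi\triv$ pins down $\omega(L)$ as odd and hence $\chi(z)=-1$, and checking that $\chi$ is defined compatibly on the whole pro-supergroup $G$ rather than on a single finitely generated quotient. The splitting and the final comparison of module categories are then essentially formal, modulo the standard care needed for group schemes in characteristic zero (the homomorphism $\Ker\chi\times\mu_2\to G$ is bijective on points between pro-supergroups, hence an isomorphism, with $\mu_2$ étale).
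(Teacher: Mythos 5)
Your proof is correct and follows essentially the same route as the paper: Deligne's super-Tannakian reconstruction gives $\mathcal{T}\simeq\Rep(\tilde{G},\eps)$, and your $H=\Ker\chi$ is precisely the paper's choice of $G$, namely the kernel of the representation of $\tilde{G}$ on $\Pi(\triv)$. The paper asserts the resulting equivalence $\Rep(\tilde{G},\eps)\simeq\Rep(G)$ in one line; your intermediate steps (the sign of $b_{L,L}$ forcing $\chi(\eps(-1))=-1$, the central splitting $\tilde{G}\cong\Ker\chi\times\mu_2$, and the identification of the two representation categories) are exactly the details it leaves implicit.
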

\begin{proof} We know from Tannakian formalism that $\mathcal T$ is equivalent to $\Rep(\tilde G,\eps)$, where $\tilde G$ is some supergroup, $\eps:\mu_2\to \tilde G$ is a homomorphism and the $g\in\mu_2,g\neq 1$ acts by the
  $\mathbb Z_2$-grading on objects of $\mathcal T$.
  Let $G$ be the kernel of the representation of $\tilde G$ in $\Pi(\triv)$. Then $\Rep(\tilde G,\eps)$ is equivalent to $\Rep(G)$.
  \end{proof}
}
  
   We now prove give a ``categorical characterization of quasi-reductive supergroups''.
\begin{proposition}\label{prop:enough_proj}
Let $G$ be an algebraic supergroup. Then $\Rep(G)$ has enough projectives if and only if $G$ is quasi-reductive.
\end{proposition}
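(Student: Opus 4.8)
The plan is to prove both directions by reducing questions about $\Rep(G)$ to the much better-understood representation theory of $G_{\bar 0}$, using the restriction functor $\Res^G_{G_{\bar 0}}: \Rep(G) \to \Rep(G_{\bar 0})$ together with its adjoints. First I would establish the key structural fact that $\Res^G_{G_{\bar 0}}$ admits an exact two-sided adjoint, the induction/coinduction functor. Concretely, since $G$ is a finite-dimensional supergroup extension of $G_{\bar 0}$ (the odd part $\g_{\bar 1}$ contributes a finite-dimensional exterior-type piece), the induced module functor $\Ind_{G_{\bar 0}}^G(-) = O(G) \otimes_{O(G_{\bar 0})} (-)$ is built from tensoring with the finite-dimensional superalgebra $\Lambda(\g_{\bar 1})$, so restriction has both a left and a right adjoint and these agree up to a parity/character twist (a "Frobenius" phenomenon). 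This exactness is the technical backbone: it lets projectivity pass through the adjunction in both directions.

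For the direction $(\Leftarrow)$, suppose $G$ is quasi-reductive, so $G_{\bar 0}$ is reductive and $\Rep(G_{\bar 0})$ is semisimple — in particular every object there is projective. I would then take any $M \in \Rep(G)$, embed it into $\Ind_{G_{\bar 0}}^G \Res^G_{G_{\bar 0}} M$ via the unit of the adjunction (which is a split injection after restriction, hence injective), and argue that induced modules $\Ind_{G_{\bar 0}}^G N$ are projective in $\Rep(G)$ whenever $N$ is projective in $\Rep(G_{\bar 0})$. The last claim follows formally from the adjunction: $\Hom_G(\Ind N, -) \cong \Hom_{G_{\bar 0}}(N, \Res(-))$, and $\Res$ is exact, so $\Hom_G(\Ind N,-)$ is exact when $N$ is $G_{\bar 0}$-projective. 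Since every $N \in \Rep(G_{\bar 0})$ is projective by reductivity, every object of $\Rep(G)$ is a subobject — and in fact, using the right adjoint, a quotient — of a projective, giving enough projectives.

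For the converse $(\Rightarrow)$, I would argue contrapositively: if $G_{\bar 0}$ is not reductive, produce an object of $\Rep(G)$ with no projective cover, i.e. no surjection from a projective. The natural strategy is to show that $\Res^G_{G_{\bar 0}}$ carries projectives to projectives (again from exactness of its adjoint $\Ind$ together with the projectivity of $O(G)$ over $O(G_{\bar 0})$), so that if $\Rep(G)$ had enough projectives, the trivial module $\triv$ — whose image under $\Res$ is the trivial $G_{\bar 0}$-module — would be a quotient of something restricting to a projective $G_{\bar 0}$-module, forcing $\triv$ to be projective in $\Rep(G_{\bar 0})$ and hence $\Rep(G_{\bar 0})$ to be semisimple, a contradiction. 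I expect the main obstacle to lie precisely in the careful verification that induction is exact and sends $G_{\bar 0}$-projectives to $G$-projectives in the super setting: one must check that $O(G)$ is a flat (indeed free, of finite rank over the odd directions) $O(G_{\bar 0})$-comodule-algebra and handle the parity twist in the Frobenius isomorphism correctly, rather than the homological bookkeeping, which is formal once the adjunction is in place.
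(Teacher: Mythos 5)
Your direction ``quasi-reductive $\Rightarrow$ enough projectives'' is essentially the paper's own argument: the restriction/induction adjunction, exactness of induction because $U(\g)$ (equivalently $O(G)$) is free of finite rank over $U(\g_{\bar 0})$, the Frobenius-type coincidence of the left and right adjoints up to a twist, and semisimplicity of $\Rep(G_{\bar 0})$. The paper phrases it through injectives plus duality while you phrase it through projectives directly, but this is the same proof, and the technical points you flag (exactness, the twist) are exactly the ones the paper handles.

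The converse, however, has a genuine gap at its final step. You correctly show that $\Res^G_{G_{\bar 0}}$ preserves projectives and that, if $\Rep(G)$ had enough projectives, the trivial module $\triv$ of $G_{\bar 0}$ would be a quotient of a projective $G_{\bar 0}$-module. But you then infer that $\triv$ is itself projective, and ``quotient of a projective $\Rightarrow$ projective'' is false: in any non-semisimple category with enough projectives \emph{every} object is a quotient of a projective (for instance $\fk$ is a quotient of $\fk[x]/(x^2)$ as a module over itself, yet is not projective); if your inference were valid, no such category could exist and the whole proposition would be vacuous. What is missing is the step that upgrades ``quotient of a projective'' to ``direct summand of a projective'', and this is precisely where the paper uses rigidity: take a nonzero projective $P \in \Rep(G_{\bar 0})$ (one may take it purely even, so $\dim P \neq 0$); then $P \otimes P^*$ is projective, since $\Hom_{G_{\bar 0}}(P \otimes P^*, -) \cong \Hom_{G_{\bar 0}}(P, - \otimes P)$ is exact; because $\dim P \neq 0$, the composition $\triv \to P \otimes P^* \to \triv$ of coevaluation and evaluation is invertible, so $\triv$ splits off $P \otimes P^*$ and is therefore projective. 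From there $\Ext^1_{G_{\bar 0}}(\triv, M) = 0$ for all $M$, hence $\Ext^1_{G_{\bar 0}}(N, M) \cong \Ext^1_{G_{\bar 0}}(\triv, N^* \otimes M) = 0$ for all $N, M$, which is the semisimplicity of $\Rep(G_{\bar 0})$ you were after. Without this splitting argument (or an equivalent), your contradiction does not materialize.
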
 
 \begin{remark}
  Clearly, the existence of enough projectives in $\Rep (G,\eps)$ is equivalent to the existence of enough projectives in $\Rep(G)$.
 \end{remark}

\begin{proof}
 
    The induction $\Ind_{G_{\bar 0}}^G$ and restriction $\Res_{G_{0}}$ define two adjoint functors between $\Rep (G)$ and $\Rep (G_{\bar{0}})$. In fact, taking into account Theorem \ref{theorem_HC}, we have the natural isomorphism
    $$\Ind_{G_{\bar 0}}^G(?)\cong \Hom_{U(\g_{\bar 0})}(U(\g),?),$$
    which explains why in this case the induction is an exact functor; it maps a finite-dimensional module to a finite-dimensional module, {and every finite-dimensional module is a submodule of $\Ind^G_{G_{\bar 0}} M$ for some
      $M \in \Rep (G_{\bar{0}})$}.
    By definition of the induction functor, if $M$ is an injective $G_{\bar{0}}$-module
    then $\Ind_{G_{\bar 0}}^GM$ is an injective $G$-module. The existence of enough projectives is equivalent to existence of enough injectives by duality. Hence if $G_{\bar 0}$ is reductive, the category $\Rep(G)$ has enough projectives.

   For converse, consider the functor $J:\Rep(G_{\bar 0})\to\Rep(G)$ defined by
  $$J(M)=U(\g)\otimes_{U(\g_{\bar 0})}M.$$
    This functor is left-adjoint to $\Res_{G_{0}}$ and isomorphic to $\Ind_{G_{\bar 0}}^G$ after some twist. We have
    $$\Hom_{G_{\bar 0}}(X, \Res_{G_{\bar 0}}Y)\cong\Hom_{G}(J(X), Y),$$
    which implies that $\Res_{G_{\bar 0}}$
    maps projectives to projectives  and hence injectives to injectives. Therefore, if $\Rep (G)$ has enough projectives, the same is true for $\Rep(G_{\bar 0})$.
If $P\in \Rep(G_{\bar 0})$ is projective, then $P\otimes P^*$ is projective.
 {If $P\neq 0$ then $\dim P\neq 0$,} and the trivial module $\fk$ splits as a direct summand in $P\otimes P^*$. Therefore, $\fk$ is projective. That means $\Ext_{G_{\bar 0}}^1(\fk,M)=0$ for any $M\in \Rep(G_{\bar 0})$ and therefore
$\Ext_{G_{\bar 0}}^1(\fk,N^*\otimes M)=\Ext_{G_{\bar 0}}^1(N, M)=0$ for any 
$N,M\in \Rep(G_{\bar 0})$. Therefore $G_{\bar 0}$ is reductive.
\end{proof}

\subsection{The Duflo-Serganova functor \texorpdfstring{$DS$}{DS}}
Let $\g$ be a Lie superalgebra, and let $x\in\g_{\bar{1}}$ be an odd element satisfying $[x,x]=0$. In \cite{DS} M. Duflo and  V. Serganova defined a functor $$DS_x: \Rep(\g) \longrightarrow \Rep(\g_x), \;\; M \longmapsto M_x:=\quotient{\Ker x\rvert_M}{\Im x \rvert_M}$$ where $\g_x:=\Ker\ad_x/\Im \ad_x$ is again a Lie superalgebra.

\begin{example}
 For $\g=\gl(m|n)$ and $x \in \g_{\bar{1}}$ of rank $1$, we have: $\g_x \cong \gl(m-1|n-1)$.
\end{example}

This functor is symmetric monoidal {(hence preserves categorical dimensions)}, but in general not exact on either side. 
\subsection{Representations of \texorpdfstring{$OSp(1|2)$}{OSp(1|2)}}\label{ssec:osp}
Recall that all isomorphism classes of indecomposable representations of the usual additive algebraic group $\G$ are enumerated by their dimensions.

Let $V_k$ be the indecomposable $(k+1)$-dimensional representation of the usual additive algebraic group $\G$ (we set $V_{-1}:={0}$). The action of $\G$ extends to an action of $SL_2$, and $V_k$ is an irreducible representation of $SL_2$.

The supergroup $OSp(1|2)$ is the group (super)scheme in $\sVect$ of 
automorphisms of the space $\fk^{1|2}$ respecting a fixed symmetric 
non-degenerate form $\fk^{1|2} \otimes \fk^{1|2} \to \fk$ and having Berezinian 
$1$ \InnaC{(so $OSp(1|2)$ is a connected group in our convention)}.

We have: $OSp(1|2)_{\bar 0} = SL_2$, and $$\osp(1|2) = \mathrm{Lie}(OSp(1|2))$$ is a $(3|2)$-dimensional Lie superalgebra with even part $\mathfrak{sl}_2$, and the odd part (as a representation of $\mathfrak{sl}_2$) isomorphic to the standard $2$-dimensional representation $V_1$. {We denote by $h$ the generator of the Cartan subalgebra in $\osp(1|2)_{\bar 1} \cong \mathfrak{sl}_2$ and by $X, Y$ the standard basis of $\osp(1|2)_{\bar 1} \cong V_1$. The elements $h, X, Y$ generate the superalgebra $\osp(1|2)$, with relations
$$[h, X]=-2X,\, [h, Y]=2Y, \, [Y, X]=h.$$ 
\comment{$$[h, X]=X, \, [h, Y]=-Y, \, [e, X] = 0,\, [e, Y]=X, \,[f, X] = Y,\, [f, Y]=0$$ and }}

The category $\Rep(OSp(1|2))$ is semisimple, with isomorphism classes of simple objects (up to parity switch) numbered by even integers: namely, we denote by 
$\widetilde{M}_{2k}$ ($k\geq 0$) the $(k+1|k)$-dimensional irreducible representation such that $$Res_{OSp(1|2)_{\bar 0}} \widetilde{M}_{2k} \cong V_{k+1} \oplus \Pi V_k$$ as $SL_2$-representations, and $\osp(1|2)_{\bar{1}}$ acts by odd morphisms accordingly.

\section{The unipotent additive supergroup}
\subsection{Definition}
Let $\G^{(1|1)}$ be the $(1|1)$-dimensional additive algebraic supergroup with 
$$\left({\G}^{(1|1)}\right)_{\bar{0}} = \G.$$ The corresponding 
$(1|1)$-dimensional Lie superalgebra has a basis $[x,x], x$, where $x \neq 0$ is 
an odd element, with relation (the Jacobi identity) $[x,[x,x]]=0$. This defines 
a Harish-Chandra pair as in \cite{Masuoka} and hence an algebraic supergroup. 

\InnaC{The ring of functions on this supergroup is the supercommutative algebra
$$ \mathcal{O}\left({\G}^{(1|1)}\right) = \quotient{\fk[t, \omega]}{\omega^2=0}$$ where $t$ is even and $\omega$ is odd. This is a Hopf algebra, the counit map given by quotienting by the ideal $<t, \omega>$, the antipode - by multiplication by $(-1)$, and the comultiplication given by $$\Delta(\omega) = 1 \otimes \omega + \omega \otimes 1, \; \Delta(t) =  1 \otimes t + t \otimes 1 +\omega \otimes \omega.$$}

Denote $\cU = \Rep(\G^{(1|1)})$.

Let $M_k$ be the indecomposable representation of $\G^{(1|1)}$ with a basis $a_0, a_1, \ldots, a_{k}$, with $\overline{a_{j}} \equiv j \mod 2$ for any $j\geq 0$, and
$$x.a_j = 
\begin{cases}
	a_{j+1} &\text{ if } j<k\\
	0 &\text{ if } j=k                                                                                                                                                                                   
\end{cases}
$$ 
It is easy to see that these are all the indecomposable representations of $\G^{(1|1)}$ up to change of parity and isomorphisms.

\begin{notation}
 By $\cU_{neat} \subset \cU$ we denote the full subcategory of objects $M \in \cU$ such that every indecomposable direct summand of $M$ has non-zero dimension. 
\end{notation}

\subsection{Relation with \texorpdfstring{$OSp(1|2)$}{OSp(1|2)}}

Let $x \in \osp(1|2)_{\bar 1}$ be such that $x^2=\frac{1}{2}[x,x] \in \osp(1|2)_{\bar 0}$ corresponds to 
$\begin{pmatrix}
0 &0\\
1 &0                                                                                                       \end{pmatrix}
$ under the isomorphism $\osp(1|2)_{\bar 0} \cong \mathfrak{sl}_2$.
The element $x$ defines an embedding $\G^{(1|1)} \hookrightarrow OSp(1|2)$, which in turn induces a tensor functor $$S^*: \Rep(OSp(1|2)) \to \cU.$$ It is easy to see that $S^*(\widetilde{M}_{2k}) \cong M_{2k}$ for any $k\geq 0$.

\subsection{Clebsh-Gordan coefficients for \texorpdfstring{$\G^{(1|1)}$}{additive (1|1)-dimensional supergroup}}

We have: $$Res^{\G^{(1|1)}}_{\G} M_{2k+1} \cong V_k \oplus \Pi V_k, \;\; Res^{\G^{(1|1)}}_{\G} M_{2k} \cong V_k \oplus \Pi V_{k-1}$$
\begin{example}
 We have: $M_0 = \triv$, $Res^{\G^{(1|1)}}_{\G} M_1 \cong \triv \oplus \Pi \triv$.
\end{example}

\begin{lemma}\label{lem:induction} We have $$M_{2k+1}\simeq \Ind^{\G^{(1|1)}}_{\G}\Pi V_k$$ or, equivalently, $M_{2k+1}\simeq U(\g^{(1|1)}_a)\otimes _{U(\g_a)}V_k$. 
\end{lemma}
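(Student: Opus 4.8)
The plan is to prove the isomorphism $M_{2k+1}\simeq \Ind^{\G^{(1|1)}}_{\G}\Pi V_k$ by identifying the induced module explicitly and matching it against the combinatorial description of $M_{2k+1}$ given just before the statement. Since the excerpt notes that $\Ind^G_{G_{\bar 0}}(?)\cong \Hom_{U(\g_{\bar 0})}(U(\g),?)$ but also that induction agrees with $U(\g)\otimes_{U(\g_{\bar 0})}(?)$ up to a twist (and the second formulation is the one the lemma asks for), I would first set up the Poincaré--Birkhoff--Witt description of $U(\g_a^{(1|1)})$ as a free module over $U(\g_a)=U(\G_{\bar 0}\text{-Lie algebra})$. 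Writing $z:=[x,x]$ for the even generator and $x$ for the odd one, the relation $[x,z]=0$ means $U(\g_a^{(1|1)})$ is a free left $U(\g_a)=\fk[z]$-module with basis $\{1,x\}$, since $x^2=\tfrac12 z$ lies already in $\fk[z]$. Hence as a left $\fk[z]$-module, $U(\g_a^{(1|1)})\cong \fk[z]\oplus \fk[z]\,x$.

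Next I would compute $J(\Pi V_k)=U(\g_a^{(1|1)})\otimes_{\fk[z]}\Pi V_k$ explicitly. Recall $V_k$ is the $(k+1)$-dimensional indecomposable $\G$-module, i.e. the $\fk[z]$-module $\fk[z]/(z^{k+1})$ with its standard basis on which $z$ acts as the shift/nilpotent Jordan block. Tensoring up along the basis $\{1,x\}$ gives a vector superspace of dimension $(k+1)+(k+1)=2k+2 = (2k+1)+1$, matching $\dim M_{2k+1}$; the parity shift $\Pi$ on $V_k$ together with the odd generator $x$ arranges the $\Z/2$-grading so that the $\{1\otimes v\}$ summand and the $\{x\otimes v\}$ summand have opposite parities. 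I would then exhibit an explicit ordered basis $a_0,a_1,\dots,a_{2k+1}$ of $J(\Pi V_k)$ interleaving the two copies — roughly, alternating between $x\otimes(\text{basis of }\Pi V_k)$ and $1\otimes(\text{basis of }\Pi V_k)$ reordered by the filtration — on which the action of the odd generator $x$ of $\g_a^{(1|1)}$ sends each basis vector to the next, and kills the top one. Verifying this uses only $x\cdot(1\otimes v)=x\otimes v$ and $x\cdot(x\otimes v)=x^2\otimes v=\tfrac12 z\otimes v=\tfrac12\,(1\otimes z v)$, so the single-step nilpotent action of $x$ is forced by how $z$ acts as the nilpotent Jordan block on $V_k$. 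Since this is precisely the defining presentation of $M_{2k+1}$, the two modules are isomorphic.

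The main obstacle I anticipate is bookkeeping the parity grading and the correct ordering of the basis so that $x$ really acts by a single Jordan shift $a_j\mapsto a_{j+1}$ rather than by some scrambled pattern; the factors of $\tfrac12$ and the alternation between the $1\otimes v$ and $x\otimes v$ strands must be reindexed carefully to reproduce the clean action in the definition of $M_{2k+1}$. A clean way to handle this is to check the claim at the level of the restriction to $\G$: the excerpt records $Res^{\G^{(1|1)}}_{\G}M_{2k+1}\cong V_k\oplus \Pi V_k$, and by Frobenius-type reasoning (adjunction between $J$ and $\Res_{\G}$, or direct PBW computation) $Res^{\G^{(1|1)}}_{\G}J(\Pi V_k)$ is also $V_k\oplus\Pi V_k$, because $z$ acts the same way on both strands. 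Once the underlying $\G$-module and the total dimension agree and $J(\Pi V_k)$ is checked to be indecomposable with $x$ acting as a full-length nilpotent chain, the classification of indecomposables of $\G^{(1|1)}$ stated in the excerpt forces $J(\Pi V_k)\cong M_{2k+1}$. The equivalence of the two forms of the statement (the $\Ind$ version versus the $U(\g)\otimes_{U(\g_a)}$ version) then follows from the twist relating $J$ and $\Ind^{\G^{(1|1)}}_{\G}$ noted in the proof of Proposition \ref{prop:enough_proj}, after checking the twist acts trivially here for dimension and parity reasons.
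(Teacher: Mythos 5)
Your overall strategy --- the PBW identification of $U(\g_a^{(1|1)})$ as a free $\fk[z]$-module with basis $\{1,x\}$ (writing $z=[x,x]$), followed by the explicit chain computation $x\cdot(1\otimes v)=x\otimes v$, $x\cdot(x\otimes v)=\tfrac{1}{2}(1\otimes zv)$ --- is exactly the ``immediate straightforward computation'' the paper leaves to the reader, and that core computation is correct. The problem is the parity bookkeeping, which you yourself flag as the main obstacle and then handle incorrectly in two places that happen to compensate. First, the tensor-product form of the lemma is $M_{2k+1}\cong U(\g_a^{(1|1)})\otimes_{U(\g_a)}V_k$ with plain $V_k$, but you compute $J(\Pi V_k):=U(\g_a^{(1|1)})\otimes_{\fk[z]}\Pi V_k$. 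Your computation does produce a single Jordan chain of length $2k+2$, but its cyclic generator $1\otimes v_0$ is \emph{odd} (since $\Pi V_k$ is purely odd), whereas the presentation of $M_{2k+1}$ requires $\overline{a_0}=\bar 0$. So what you have actually proved is $J(\Pi V_k)\cong\Pi M_{2k+1}$, and $\Pi M_{2k+1}\not\cong M_{2k+1}$: morphisms in $\Rep(\G^{(1|1)})$ are even, so the parity of the one-dimensional cosocle $M/xM$ is an isomorphism invariant, and the paper's list of indecomposables is only complete ``up to change of parity'', i.e.\ $M_j$ and $\Pi M_j$ are distinct objects. Note also that your fallback test --- matching restrictions to $\G$ and invoking indecomposability --- cannot close this gap, because $M_{2k+1}$ and $\Pi M_{2k+1}$ have isomorphic restrictions $V_k\oplus\Pi V_k$; that test only pins the answer down to the pair $\{M_{2k+1},\Pi M_{2k+1}\}$.

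Second, and relatedly, your final step asserts that the twist relating $\Ind^{\G^{(1|1)}}_{\G}$ to $J$ ``acts trivially here for dimension and parity reasons''. It does not: the twist is a twist by an odd line, i.e.\ it is precisely the parity shift $\Pi$, and this is exactly why the two formulations of the lemma carry different arguments --- $\Pi V_k$ in the $\Ind$ version but $V_k$ in the tensor version. If the twist were trivial, the lemma would be asserting $J(\Pi V_k)\cong J(V_k)$, i.e.\ $\Pi M_{2k+1}\cong M_{2k+1}$, which is false. The repair is routine: run your chain computation on $J(V_k)$ instead --- then $a_j:=x^j(1\otimes v_0)$ has parity $j \bmod 2$ and matches the presentation of $M_{2k+1}$ on the nose --- and then verify $\Ind^{\G^{(1|1)}}_{\G}(W)\cong J(\Pi W)$, for instance by computing the coinduced module $\Hom_{\fk[z]}\bigl(U(\g_a^{(1|1)}),\Pi V_k\bigr)$ directly: the even homomorphism determined by $1\mapsto 0$, $x\mapsto v_0$ generates a chain of length $2k+2$ with even generator, giving $\Ind^{\G^{(1|1)}}_{\G}(\Pi V_k)\cong M_{2k+1}\cong J(V_k)$ as the lemma states.
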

\begin{remark}
{Recall that induction between algebraic groups corresponds to coinduction between Lie algebras.}
\end{remark}
\begin{proof} Immediate straightforward computations.
  \end{proof}

\begin{lemma}\label{lem:Clebsh_Gordan}
 The $\G^{(1|1)}$-module decomposition of tensor products of indecomposables into indecomposable summands is as follows:

\begin{align*}
 &M_{2k}\otimes M_{2m} \cong \bigoplus_{s=\abs{k-m}}^{k+m} \Pi^{k+m-s} M_{2s}\\
 &M_{2k+1} \otimes M_{2m}\cong \bigoplus_{s=min(\abs{k-m}, \abs{k-m+1})}^{k+m} \Pi^{k+m-s} M_{2s+1}\\
 &M_{2k+1} \otimes M_{2m+1}\cong \bigoplus_{\substack{\abs{k-m}\leq s\leq {k+m}\\ s\equiv k-m \mod 2}} M_{2s+1} \oplus \Pi M_{2s+1}
\end{align*}
\end{lemma}
\begin{proof}
The first identity follows from the Clebsh-Gordan identity for $OSp(1|2)$ using the fact that {$S^*(\widetilde{M}_{2k}) \cong M_{2k}$ for any $k\geq 0$}.
  For the second and the third identities we use that for any pair of supergroups $H\subset G$ and $M\in \Rep(H), N\in\Rep(G)$ we have a natural isomorphism
  $$ \Ind^G_H(M)\otimes N\cong\Ind^G_H(M\otimes \Res^G_HN).$$
  In particular,
  $$M_{2k+1} \otimes M_{2m}\cong\Ind^{\G^{(1|1)}}_{\G}\Pi V_k\otimes M_{2m}\cong\Ind^{\G^{(1|1)}}_{\G}(\Pi V_k\otimes (V_{m}\oplus \Pi V_{m-1})).$$
  Using the the Clebsh-Gordan coefficients for $SL(2)$ we get
  $$\Pi V_k\otimes (V_{m}\oplus \Pi V_{m-1})=\bigoplus_{\substack{\abs{k-m}\leq s\leq k+m,\\ s\equiv m-k \mod 2}}V_s\oplus\bigoplus_{\substack{\abs{k-m+1}\leq s\leq k+m-1,\\ s\equiv m-k+1 \mod 2}}\Pi V_s.$$
  Hence
  $$\Ind^{\G^{(1|1)}}_{\G}(\Pi V_k\otimes (V_{m}\oplus \Pi V_{m-1}))=\bigoplus_{s=min(\abs{k-m}, \abs{k-m+1})}^{k+m} \Pi^{k+m-s} M_{2s+1}.$$
  The proof of the third identity is similar.
  
 \comment{
 First, consider the $\G^{(1|1)}$-representation $M_{2k}\otimes M_{2m}$. Restricting to $\G$, we have:
 
\begin{align*}
 &Res^{\G^{(1|1)}}_{\G} \left( M_{2k}\otimes M_{2m}\right) \cong \left(V_k \oplus \Pi V_{k-1}\right) \otimes \left(V_m \oplus \Pi V_{m-1} \right)  \cong \\
 &\cong \bigoplus_{s=\abs{k-m}}^{k+m} V_s \oplus \bigoplus_{s=\abs{k-m}}^{k+m-2} V_s \oplus \Pi  \bigoplus_{s=\abs{k-m-1}}^{k+m-1} V_s \oplus \Pi  \bigoplus_{s=\abs{k-m+1}}^{k+m-1} V_s
 \end{align*}

Due to the decompositions of $Res^{\G^{(1|1)}}_{\G} M_{2s+1}, Res^{\G^{(1|1)}}_{\G} M_{2s}$, we conclude immediately that $$M_{2k}\otimes M_{2m} \cong \bigoplus_{s=\abs{k-m}}^{k+m} \Pi^{k+m-s} M_{2s}.$$

Now, we consider the remaining decomposition of $M_{2k+1}\otimes M_{2m}$. We have the following special case:
 \begin{sublemma}
  For any $m\geq 1$, we have: $$M_{2m} \otimes M_{1}\cong M_{1}\otimes M_{2m}\cong M_{2m+1} \oplus \Pi M_{2m-1}$$
  and we have: $$M_{2m+1} \otimes M_{1}\cong M_{1}\otimes M_{2m+1}\cong M_{2m+1} \oplus \Pi M_{2m+1}$$
 \end{sublemma}
\begin{proof}[Proof of Sublemma]
Let $\{a^{(k)}_j\}_{j=0}^k$ denote the basis of $M_{k}$ as above. 

We have a submodule of $M_{2m} \otimes M_{1}$, with the following basis (here and below the arrows indicate the action of $x$, the last vector being sent to zero by $x$):
\begin{align*}
&a^{(2m)}_{0}\otimes a^{(1)}_0 \mapsto a^{(2m)}_{1}\otimes a^{(1)}_0 + a^{(2m)}_{0}\otimes a^{(1)}_1 \mapsto  a^{(2m)}_{2}\otimes a^{(1)}_0 \mapsto \ldots \\
\mapsto &a^{(2m)}_{2j} \otimes a^{(1)}_0 \mapsto a^{(2m)}_{2j+1}\otimes a^{(1)}_0 + a^{(2m)}_{2j}\otimes a^{(1)}_1 \mapsto  a^{(2m)}_{2j+2}\otimes a^{(1)}_0 \mapsto \ldots \\ \mapsto &a^{(2m)}_{2m}\otimes a^{(1)}_0 \mapsto a^{(2m)}_{2m}\otimes a^{(1)}_1
\end{align*}
This submodule is in fact a direct summand of $M_{2m} \otimes M_{1}$, and is isomorphic to $M_{2m+1}$. Its complement summand is given by the submodule with the following basis:

\begin{align*}
&a^{(2m)}_{1}\otimes a^{(1)}_0 \mapsto -a^{(2m)}_{1}\otimes a^{(1)}_1 + a^{(2m)}_{2}\otimes a^{(1)}_0 \mapsto  a^{(2m)}_{3}\otimes a^{(1)}_0 \mapsto \ldots \\
\mapsto &a^{(2m)}_{2j+1} \otimes a^{(1)}_0 \mapsto -a^{(2m)}_{2j+1}\otimes a^{(1)}_1 + a^{(2m)}_{2j+2}\otimes a^{(1)}_0 \mapsto  a^{(2m)}_{2j+3}\otimes a^{(1)}_0 \mapsto \ldots \\ \mapsto &-a^{(2m)}_{2m-1}\otimes a^{(1)}_1 +a^{(2m)}_{2m}\otimes a^{(1)}_0
\end{align*}
The latter submodule is isomorphic to $\Pi M_{2m-1}$.

Thus the isomorphism between $M_{2m} \otimes M_{1}$ and $M_{2m+1} \oplus \Pi M_{2m-1}$ is given as follows: 

\begin{align*}
&\forall 0 \leq j \leq m, &a^{(2m)}_{2j}\otimes a^{(1)}_0 \mapsto (a^{(2m+1)}_{2j}, 0), \;\;\; &a^{(2m)}_{2j}\otimes a^{(1)}_1 \mapsto (a^{(2m+1)}_{2j+1}, -a^{(2m-1)}_{2j})\\
&\forall 0 \leq j \leq m-1,  &a^{(2m)}_{2j+1}\otimes a^{(1)}_0 \mapsto (0, a^{(2m-1)}_{2j}), \;\;\;
&a^{(2m)}_{2j+1}\otimes a^{(1)}_1 \mapsto (a^{(2m+1)}_{2j+2}, -a^{(2m-1)}_{2j+1})
\end{align*}

Similarly, the isomorphism between $M_{2m+1} \otimes M_{1}$ and $M_{2m+1} \oplus \Pi M_{2m+1}$ is given as follows: 

\begin{align*}
&\forall 0 \leq j \leq m, &a^{(2m+1)}_{2j}\otimes a^{(1)}_0 \mapsto (a^{(2m+1)}_{2j}, 0), \;\;\;&a^{(2m+1)}_{2j}\otimes a^{(1)}_1 \mapsto (a^{(2m+1)}_{2j+1}, -a^{(2m+1)}_{2j})\\
&\forall 0 \leq j \leq m-1, &a^{(2m+1)}_{2j+1}\otimes a^{(1)}_0 \mapsto (0, a^{(2m+1)}_{2j}), \;\;\;
&a^{(2m+1)}_{2j+1}\otimes a^{(1)}_1 \mapsto (a^{(2m+1)}_{2j+2}, -a^{(2m+1)}_{2j+1})\\
&{} &{} &a^{(2m+1)}_{2m+1}\otimes a^{(1)}_1 \mapsto (0, -a^{(2m+1)}_{2m+1})
\end{align*}

\comment{
we have the following submodule of $M_{2m+1} \otimes M_{1}$, with arrows indicating the action of $x$:
\begin{align*}
&a^{(2m+1)}_{0}\otimes a^{(1)}_0 \mapsto a^{(2m+1)}_{1}\otimes a^{(1)}_0 + a^{(2m+1)}_{0}\otimes a^{(1)}_1 \mapsto  a^{(2m+1)}_{2}\otimes a^{(1)}_0 \mapsto \ldots \\
\mapsto &a^{(2m+1)}_{2j} \otimes a^{(1)}_0 \mapsto a^{(2m+1)}_{2j+1}\otimes a^{(1)}_0 + a^{(2m+1)}_{2j}\otimes a^{(1)}_1 \mapsto  a^{(2m+1)}_{2j+2}\otimes a^{(1)}_0 \mapsto \ldots \\ \mapsto &a^{(2m+1)}_{2m+1}\otimes a^{(1)}_0 + a^{(2m+1)}_{2m}\otimes a^{(1)}_1
\end{align*}
This submodule is in fact a direct summand of $M_{2m+1} \otimes M_{1}$, and is isomorphic to $M_{2m+1}$. Its complement summand is given by the submodule

\begin{align*}
&a^{(2m+1)}_{1}\otimes a^{(1)}_0 \mapsto -a^{(2m+1)}_{1}\otimes a^{(1)}_1 + a^{(2m+1)}_{2}\otimes a^{(1)}_0 \mapsto  a^{(2m+1)}_{3}\otimes a^{(1)}_0 \mapsto \ldots \\
\mapsto &a^{(2m+1)}_{2j+1} \otimes a^{(1)}_0 \mapsto -a^{(2m+1)}_{2j+1}\otimes a^{(1)}_1 + a^{(2m+1)}_{2j+2}\otimes a^{(1)}_0 \mapsto  a^{(2m+1)}_{2j+3}\otimes a^{(1)}_0 \mapsto \ldots \\ \mapsto &a^{(2m+1)}_{2m+1}\otimes a^{(1)}_0 \mapsto -a^{(2m+1)}_{2m+1}\otimes a^{(1)}_1
\end{align*}
which is isomorphic to $\Pi M_{2m+1}$.
}
\end{proof}

The $\G^{(1|1)}$-module $M_{2k+1}\otimes M_{2m}$ is a direct summand of $$M_{2k} \otimes M_1 \otimes M_{2m} \cong M_{2k}  \otimes M_{2m} \otimes M_1 \cong \bigoplus_{s=\abs{k-m}}^{k+m} \Pi^{k+m-s} M_{2s} \otimes M_1 \cong \bigoplus_{s=\abs{k-m}}^{k+m} \Pi^{k+m-s} \left(M_{2s+1} \oplus \Pi M_{2s-1}\right)$$

Hence the indecomposable $\G^{(1|1)}$-summands in $M_{2k+1}\otimes M_{2m}$ necessarily have the form $M_{2s+1}$ or $\Pi M_{2s+1}$. Restricting to $\G$, we have:
 
\begin{align*}
 &Res^{\G^{(1|1)}}_{\G} \left( M_{2k+1}\otimes M_{2m} \right)\cong \left(V_k \oplus \Pi V_k\right) \otimes \left(V_m \oplus \Pi V_{m-1} \right)  \cong \\
 &\cong \bigoplus_{s=\abs{k-m}}^{k+m} V_s \oplus \bigoplus_{s=\abs{k-m+1}}^{k+m-1} V_s \oplus \Pi  \bigoplus_{s=\abs{k-m}}^{k+m} V_s \oplus \Pi  \bigoplus_{s=\abs{k-m+1}}^{k+m-1} V_s
 \end{align*}
 Due to the decomposition of $Res^{\G^{(1|1)}}_{\G} M_{2s+1}$, we conclude that 
 $$M_{2k+1} \otimes M_{2m}\cong \bigoplus_{s=min(\abs{k-m}, \abs{k-m+1})}^{k+m} \Pi^{k+m-s} M_{2s+1}$$

 Finally, consider the $\G^{(1|1)}$-module $M_{2k+1} \otimes M_{2m+1}$. Due to the Sublemma above, this is a direct summand of $M_{2k+1} \otimes M_{2m} \otimes M_1 $. By the previous computations, we have:
\begin{align*}
 M_{2k+1} \otimes M_{2m}\otimes M_1\cong \bigoplus_{s=min(\abs{k-m}, \abs{k-m+1})}^{k+m} \Pi^{k+m-s} M_{2s+1}\otimes M_1 \cong \\ \cong\bigoplus_{s=min(\abs{k-m}, \abs{k-m+1})}^{k+m} M_{2s+1} \oplus \Pi M_{2s+1}
 \end{align*}
 
 Hence $M_{2k+1} \otimes M_{2m+1}$ has only the summands of the form $M_{2s+1}$ or $\Pi M_{2s+1}$, each appearing with multiplicity at most $1$.
 Restricting to $\G$, we have:
 
\begin{align*}
 &Res^{\G^{(1|1)}}_{\G} \left( M_{2k+1}\otimes M_{2m+1} \right) \cong \left(V_k \oplus \Pi V_k\right) \otimes \left(V_m \oplus \Pi V_{m} \right)  \cong \\
 &\cong \bigoplus_{s=\abs{k-m}}^{k+m} V_s^{\oplus 2} \oplus \Pi  \bigoplus_{s=\abs{k-m}}^{k+m} V_s^{\oplus 2}
 \end{align*}
 
So we obtain: 
 $$M_{2k+1} \otimes M_{2m+1} \cong \bigoplus_{s=\abs{k-m}}^{k+m} M_{2s+1} \oplus \Pi M_{2s+1}$$
}
\end{proof}

\begin{corollary}
 The subcategory $\cU_{neat}$ is a Karoubian rigid SM subcategory of $\cU$.
\end{corollary}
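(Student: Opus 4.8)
The plan is to first determine exactly which indecomposable objects of $\cU$ are neat, and then verify that the full subcategory of objects all of whose indecomposable summands are neat is stable under the relevant operations. Recall that the indecomposables of $\cU$ are, up to isomorphism and parity shift, the modules $M_k$. To compute their dimensions I would use the restriction formulas $\Res^{\G^{(1|1)}}_{\G} M_{2k+1} \cong V_k \oplus \Pi V_k$ and $\Res^{\G^{(1|1)}}_{\G} M_{2k} \cong V_k \oplus \Pi V_{k-1}$ together with $\dim V_j = j+1$: this gives $\dim M_{2k+1} = (k+1)-(k+1) = 0$ and $\dim M_{2k} = (k+1)-k = 1$. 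Hence the neat indecomposables are precisely the even-indexed $M_{2k}$ and their parity shifts $\Pi M_{2k}$, and $\cU_{neat}$ is the full subcategory of $\cU$ whose objects are the finite direct sums of such modules.

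With this description, additivity and the Karoubian property are formal. Since $\cU$ is Krull--Schmidt, any indecomposable summand of $M \oplus N$ is isomorphic to a summand of $M$ or of $N$, so $\cU_{neat}$ is closed under finite direct sums and contains the zero object. If $M \in \cU_{neat}$ and $e \in \End(M)$ is idempotent, then $e$ splits in $\cU$ and $\Im(e)$ is a direct summand of $M$; its indecomposable summands occur among those of $M$ and are therefore neat, so $\Im(e) \in \cU_{neat}$. Thus every idempotent splits in $\cU_{neat}$, and $\cU_{neat}$ is an additive Karoubian subcategory.

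It remains to check closure under duality and under the monoidal structure. The unit $\triv = M_0$ is neat. For duality, I would use that $(-)^*$ is a contravariant autoequivalence of $\cU$ preserving categorical dimension, hence it sends indecomposables to indecomposables of the same (nonzero) dimension; therefore $M^* \in \cU_{neat}$ whenever $M \in \cU_{neat}$, and $\cU_{neat}$ inherits rigidity from $\cU$. For the tensor product, by bilinearity of $\otimes$ and its compatibility with $\Pi$ it suffices to treat a product of two indecomposable generators, and here the first Clebsch--Gordan identity $M_{2k}\otimes M_{2m} \cong \bigoplus_{s=\abs{k-m}}^{k+m} \Pi^{k+m-s} M_{2s}$ shows the summands are again even-indexed, hence neat. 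So $M \otimes N \in \cU_{neat}$ for all $M,N \in \cU_{neat}$, and the symmetry morphisms of $\cU$ restrict, making $\cU_{neat}$ a symmetric monoidal subcategory.

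The only substantive input is the tensor-product computation, namely the first Clebsch--Gordan identity, which records that the parity of the index is preserved under $\otimes$; this is precisely what guarantees closure under tensor. Everything else --- additivity, splitting of idempotents, and closure under duals --- follows formally from the Krull--Schmidt property of $\cU$ and from duality preserving categorical dimension, so I do not expect any genuine obstacle.
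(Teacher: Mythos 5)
Your proof is correct and takes essentially the same route as the paper's: the only substantive input in both is the first Clebsch--Gordan identity $M_{2k}\otimes M_{2m} \cong \bigoplus_{s=|k-m|}^{k+m} \Pi^{k+m-s} M_{2s}$, which shows that tensor products of neat indecomposables are again neat, with the remaining claims treated as formal. Your write-up merely makes explicit the details the paper dismisses as ``straightforward'' (identification of the neat indecomposables via the restriction formulas, Krull--Schmidt for additivity and idempotent splitting, and dimension-preservation of duals for rigidity).
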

\begin{proof}
 The indecomposable objects in $\cU_{neat}$ are precisely $M_{2m}$ for $m\geq 0$. By the computation above, the tensor product of any two such $\G^{1|1}$-modules lies again in $\cU_{neat}$. So $\cU_{neat}$ is closed under taking tensor products. The remaining claims are straightforward.
\end{proof}

\subsection{Semisimplification}

Consider the semisimplification of $\cU$. This is a $\fk$-linear monoidal functor $S: \cU \to \overline{\cU}$, where $\overline{\cU}$ is a semisimple tensor category.

Clearly, $S$ doesn't annihilate any object in $\cU_{neat}$.

Furthermore, we have:

\begin{lemma}
The composition $$S \circ S^*: \Rep(OSp(1|2)) \longrightarrow \overline{\cU}$$ is a tensor equivalence.
\end{lemma}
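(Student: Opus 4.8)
The plan is to exploit that both $\Rep(OSp(1|2))$ and $\overline{\cU}$ are semisimple tensor categories, and to show that $F := S \circ S^*$ induces a bijection on isomorphism classes of simple objects together with isomorphisms on the relevant $\Hom$-spaces. Since $S^*$ is defined on the semisimple category $\Rep(OSp(1|2))$, any short exact sequence in its source splits, so $F$ is automatically exact; it is also symmetric monoidal, being a composition of two SM functors. Hence $F$ is a tensor functor, and it suffices to prove that it is fully faithful and essentially surjective.

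First I would identify the simple objects on both sides. On the source, the simple objects of $\Rep(OSp(1|2))$ are, up to parity shift, the $\widetilde{M}_{2k}$ for $k \geq 0$. On the target, recall that the indecomposable objects of $\cU$ are (up to parity) the modules $M_k$, with $\dim M_{2k} = 1 \neq 0$ while $\dim M_{2k+1} = 0$; by the general theory of semisimplification the functor $S$ annihilates exactly the indecomposables of zero categorical dimension (here $M_{2k+1}$ and $\Pi M_{2k+1}$), while sending the indecomposables of nonzero dimension to pairwise non-isomorphic simple objects of $\overline{\cU}$ with $\End = \fk$. Thus the simple objects of $\overline{\cU}$ are, up to parity, precisely the $S(M_{2k})$, $k \geq 0$.

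Next I would compute the action of $F$ on simples. Using the already-established isomorphism $S^*(\widetilde{M}_{2k}) \cong M_{2k}$, we obtain
$$F(\widetilde{M}_{2k}) = S(S^*(\widetilde{M}_{2k})) \cong S(M_{2k}).$$
Since $F$ is symmetric monoidal it commutes with the parity shift $\Pi$, so $F$ carries the full list of simple objects of $\Rep(OSp(1|2))$ bijectively onto the full list of simple objects of $\overline{\cU}$; this gives essential surjectivity. For full faithfulness, by semisimplicity it is enough to check that $F$ induces an isomorphism on $\Hom$-spaces between simple objects. By Schur's lemma both $\Hom_{\Rep(OSp(1|2))}(\widetilde{M}_{2k}, \widetilde{M}_{2l})$ and $\Hom_{\overline{\cU}}(S(M_{2k}), S(M_{2l}))$ equal $\fk$ when $k=l$ and vanish otherwise, and the map induced by $F$ sends $\id$ to $\id$; being a unital $\fk$-linear map between one-dimensional spaces, it is an isomorphism, so $F$ is fully faithful.

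The main point to nail down carefully is the description of the simple objects of $\overline{\cU}$: that $S$ kills precisely the $M_{2k+1}$ and sends distinct nonzero-dimensional indecomposables to distinct simples with scalar endomorphisms. This rests on the classification of indecomposables of $\G^{(1|1)}$, the dimension count $\dim M_{2k}=1$, $\dim M_{2k+1}=0$, and the standard structure theory of the semisimplification of a Karoubian rigid symmetric monoidal category. Once this is in place the equivalence follows formally, and since $F$ is symmetric monoidal it is a tensor equivalence.
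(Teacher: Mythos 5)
Your proof is correct and takes essentially the same route as the paper: both arguments identify the simple objects of $\overline{\cU}$ as the images $S(M_{2k})$ of the indecomposables of nonzero dimension (via the classification of indecomposables in $\cU$ and the Etingof--Ostrik semisimplification theory), and both use the isomorphism $S^*(\widetilde{M}_{2k})\cong M_{2k}$ to conclude essential surjectivity. The only difference is bookkeeping: the paper invokes the general facts that a $\fk$-linear SM functor between semisimple tensor categories is automatically exact and faithful, and that essential surjectivity then makes it full, whereas you verify full faithfulness directly on simples via Schur's lemma --- both are valid.
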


\begin{proof}
 The composition $S \circ S^*$ {as in the diagram below
$$\xymatrix{&{} &{} &\cU \ar[d]^{S} \\ &\Rep(OSp(1|2)) \ar[rru]^{S^*} \ar[rr]_-{S\circ S^*}  &{} &\overline{\cU} }$$
} is a $\fk$-linear SM functor between semisimple tensor categories. As such, it is automatically exact and faithful, and we only need to check that it is essentially surjective (which will make it automatically full). 
 
 Indeed, recall that $\{M_r\}_{r\geq 0}$ are the isomorphism classes of indecomposable objects in $ {\cU}$; for odd $r$, $\dim M_r =0$ so $S(M_r) = 0$, while for even $r$, $\dim M_r \neq 0$. So $$\{S(M_{2k}) = S\circ S^*(\widetilde{M}_{2k})\}_{k\geq 0}$$ are the isomorphism classes of simples in $ \overline{\cU}$, and $S \circ S^*$ is essentially surjective. 
\end{proof}

\subsection{Deligne filtration}\label{filtration}
Let $x$ be an odd nilpotent element acting on a finite-dimensional superspace $M$. Then $x$ defines a canonical finite increasing filtration\footnote{In the case of even $x$ this is the filtration which appears in the Hodge theory.}
$$\ldots\subset\mathcal F^{i}(M)\subset \mathcal F^{i+1}(M)\subset\ldots $$ satisfying the conditions
\begin{itemize}
  \item $x(\mathcal F^{i}(M))\subset\mathcal F^{i-2}(M)$;
  \item If $Gr^i(M):=\mathcal F^{i}(M)/\mathcal F^{i-1}(M)$ then $x^{{i}}: Gr^{i}(M)\to \Pi^iGr^{-i}(M) $ is an isomorphism for all $i\geq 0$.
  \end{itemize}
In particular, each object in $\cU$ is endowed with such a filtration, which is compatible with direct sums. On the indecomposable $\G^{(1|1)}$-module $M_{k}$, the filtration is given by: $$\mathcal{F}^{k-2i+1}(M_{k}) =\mathcal{F}^{k-2i}(M_{k}) = span\{a_j\}_{j\geq i} \; \text{ for } \; i\geq 0.$$

Choose the standard set of generators {$h, X, Y$ in  $\mathfrak{osp}(1|2)${ as in \ref{ssec:osp}}.

\begin{lemma}\label{lem:ospstr}
  For any $\G^{(1|1)}$-module $M$, $Gr^{ev}(M):=\bigoplus_{i\in\mathbb Z}Gr^{2i}(M)$ has a unique structure of $\mathfrak{osp}(1|2)$-module such that $h$ acts by grading and $X$ acts as $Gr(x)$.
\end{lemma}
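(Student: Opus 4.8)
The plan is to reduce everything to producing a single odd operator, and then to extract that operator from the hard--Lefschetz property built into the Deligne filtration. Write $W=Gr^{ev}(M)=\bigoplus_i W_i$ with $W_i:=Gr^{2i}(M)$, let $h$ act by $2i$ on $W_i$, and let $X$ act by $Gr(x)\colon W_i\to W_{i-1}$. First I would record that $\osp(1|2)$ is presented, as a Lie superalgebra, by the even generator $h$ together with the odd generators $X,Y$ and the relations $[h,X]=-2X$, $[h,Y]=2Y$, $[Y,X]=h$: since $[X,[X,X]]=[Y,[Y,Y]]=0$ holds automatically for odd elements in characteristic $0$, these relations express every iterated bracket inside the $5$-dimensional span of $h,X,Y,e:=\tfrac12[Y,Y],f:=\tfrac12[X,X]$, which is therefore all of $\osp(1|2)$. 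Hence an $\osp(1|2)$-module structure on $W$ extending the prescribed $h$ and $X$ is exactly the datum of an odd operator $Y$ on $W$ satisfying these three relations. The relation $[h,X]=-2X$ holds automatically because $X$ lowers the grading by $2$, so the whole statement reduces to the claim that there is a unique odd operator $Y$ raising the grading by $2$ with $XY+YX=h$.

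Next I would construct such a $Y$. The defining property of the Deligne filtration gives the hard--Lefschetz isomorphisms $X^{2i}\colon W_i\xrightarrow{\sim}W_{-i}$ for all $i\ge 0$. Regarding $(W,h,X)$ as a graded vector space with semisimple grading $h$ and nilpotent degree-$(-2)$ operator $X$ (forgetting parity), this is precisely the condition that makes $(h,X)$ extend to an $\mathfrak{sl}_2$-action with $X$ as lowering operator; by the standard Lefschetz decomposition $W$ is then a direct sum of $X$-strings $v_0,Xv_0,\dots,X^{2i}v_0$ of top weight $2i$. On each such string I would set $Yv_0=0$ and $Yv_k=c_kv_{k-1}$, where the constants $c_k$ are the unique solution of the recursion $c_k+c_{k+1}=2(i-k)$ imposed by $XY+YX=h$; the endpoint conditions $c_1=2i$ and $c_{2i}=-2i$ are compatible precisely because hard Lefschetz forces each string to be symmetric of length $2i+1$. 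Because $X$ is odd, consecutive string vectors have opposite parity, and the primitive vectors can be taken homogeneous since $X^{2i+1}$ is an odd operator whose kernel in $W_i$ is a sub-superspace; thus the resulting $Y$ is odd, and $(h,X,Y)$ defines the desired $\osp(1|2)$-structure.

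For uniqueness, suppose $Y$ and $Y'$ both satisfy the two relations, and set $D:=Y-Y'$. Then $D$ is odd, raises the grading by $2$, and super-anticommutes with $X$, i.e. $XD+DX=0$, whence $X^mD=(-1)^mDX^m$. For any primitive $p\in W_i$ (so $X^{2i+1}p=0$) this gives $X^{2i+1}(Dp)=-D(X^{2i+1}p)=0$; writing $Dp\in W_{i+1}$ in the Lefschetz decomposition and applying $X^{2i+1}$ carries its primitive components injectively into distinct string summands, so they all vanish and $Dp=0$. Since $Dw=(-1)^kX^k(Dp)=0$ for every $w=X^kp$, and such vectors span $W$, we conclude $D=0$. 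Thus $Y$ is unique, and with it the $\osp(1|2)$-structure.

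The step I expect to be the main obstacle is the honest interaction of the odd operator $X$ with the classical (even) Lefschetz decomposition: one must check that the canonically defined raising operator comes out genuinely odd rather than merely homogeneous, and that uniqueness holds globally on $W$ rather than only within each string. Both points are secured by the super-anticommutation relation $XD+DX=0$ together with the injectivity furnished by hard Lefschetz. I note finally that, since the filtration and $Gr^{ev}$ are additive and $Gr^{ev}(M_{2m+1})=0$, the whole argument could equally be run summand by summand, where each even string of top weight $2i$ is exactly the irreducible $\osp(1|2)$-module $\widetilde M_{2i}$.
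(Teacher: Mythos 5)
Your proof is correct, but it takes a genuinely different route from the paper's on both halves of the statement. For existence, the paper does not argue intrinsically on the graded space: it decomposes $M$ into indecomposable $\G^{(1|1)}$-modules (using the classification $\{M_k\}$ obtained earlier in the paper), notes that the odd summands contribute nothing to $Gr^{ev}$, and defines $Y$ on each $Gr^{ev}(M_{2k})\cong\widetilde{M}_{2k}$ ``in the obvious manner''; you instead work entirely on $W=Gr^{ev}(M)$, extract a string decomposition from the hard-Lefschetz property of the Deligne filtration, and solve for the coefficients $c_k$ explicitly. For uniqueness, the paper's argument is representation-theoretic: $D=Y-Y'$ satisfies $[h,D]=2D$ and $[X,D]=0$, hence is a lowest-weight vector of positive weight $2$ in $\End_{\fk}(Gr^{ev}(M))$ for the $\osp(1|2)$-action generated by $h,X,Y$, which is impossible for a nonzero vector in a finite-dimensional module; yours is a bare-hands computation ($X^mD=(-1)^mDX^m$, so $D$ annihilates primitive vectors by hard-Lefschetz injectivity, hence $D=0$). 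Your route buys self-containedness: it uses neither the classification of $\G^{(1|1)}$-indecomposables nor weight theory for $\osp(1|2)$, it isolates hard Lefschetz as the only real input (so the argument applies verbatim to any graded superspace with that property), and your justification that the three relations actually present $\osp(1|2)$ fills in a point the paper takes as known. The paper's route buys brevity and slickness given its ambient toolkit, the lowest-weight trick in particular replacing your bookkeeping with primitives --- which, incidentally, you could shorten: $X^{2i+1}(Dp)=0$ already gives $X^{2i+2}(Dp)=0$, and $X^{2i+2}$ is injective on $W_{i+1}$ by hard Lefschetz, so $Dp=0$ without decomposing $W_{i+1}$ into strings.
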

\begin{proof} In order to define the $\mathfrak{osp}(1|2)$-module structure we have to define the action of $Y$ which satisfies the relations. We just write $M$ as a direct sum of modules of the form $M_{2k}$ and define this structure
  on each of $M_{2k}$ in the obvious manner.

  Now let us prove uniqueness.
  Suppose that there are two ways to define $Y,Y'$. Then $[h,Y-Y']=2(Y-Y')$ and $[X,Y-Y']=0$. That means that $Y-Y'\in\operatorname{End}_{\fk}(M)$ is the lowest weight vector of weight $2$ with {respect to the action of
  the Lie superalgebra} $\mathfrak{osp}(1|2)$ generated by $h, X, Y$. Since lowest weight can not be positive, we get $Y-Y'=0$.
\end{proof}

Let us denote by $T(M)$ the $\mathfrak{osp}(1|2)$-module associated to  $Gr^{ev}(M)$.

\begin{lemma}\label{lem:iso} $T$ defines a SM functor $\Rep(\G^{(1|1)})\to \Rep(OSp(1|2))$ isomorphic to $S$.
\end{lemma}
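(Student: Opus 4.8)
The plan is not to build the symmetric monoidal structure on $T$ by hand from the filtration (the naive tensor-product filtration is \emph{not} the Deligne filtration when $x$ is odd, since here $x^2\neq 0$), but rather to realize $T$ as a factorization through the semisimplification functor $S$, where the monoidal structure is already available. Write $E:=S\circ S^*\colon\Rep(OSp(1|2))\to\overline{\cU}$ for the tensor equivalence established above. I will show that $T$ is an additive $\fk$-linear functor which annihilates all negligible morphisms, deduce a factorization $T\cong\bar T\circ S$, and then identify $\bar T$ with $E^{-1}$.

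First I would check that $T$ is a well-defined $\fk$-linear functor $\cU\to\Rep(OSp(1|2))$. Since the Deligne filtration is canonically attached to the odd nilpotent operator $x$, any $\G^{(1|1)}$-morphism $f\colon M\to N$ commutes with $x$ and hence preserves the filtration, inducing a degree-zero graded map $Gr(f)\colon Gr^{ev}(M)\to Gr^{ev}(N)$. This map commutes with $h$ (it preserves the grading) and with $X=Gr(x)$. Setting $\psi:=Y\circ Gr(f)-Gr(f)\circ Y$, a direct computation using the relation $[Y,X]=h$ together with the fact that $Gr(f)$ commutes with $h$ and $X$ shows that $\psi$ commutes with $X$ and has $h$-weight $2$; by the same lowest-weight argument used in Lemma~\ref{lem:ospstr}, $\psi=0$. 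Hence $Gr(f)$ is a morphism of $\osp(1|2)$-modules, and $T$ is a functor.

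Next I would record the values of $T$ on indecomposables. For $M_{2k}$ the Deligne grading lives in the even degrees $2k,2k-2,\dots,-2k$, so $Gr^{ev}(M_{2k})=Gr(M_{2k})$ is $(k+1\mid k)$-dimensional with one-dimensional weight spaces; by uniqueness of the $\osp(1|2)$-structure (Lemma~\ref{lem:ospstr}) it must be the simple module $\widetilde{M}_{2k}$. For $M_{2k+1}$ the grading is concentrated in odd degrees, so $Gr^{ev}(M_{2k+1})=0$ and $T(M_{2k+1})=0$. In particular $T$ agrees on objects with $E^{-1}\circ S$, since $E(\widetilde{M}_{2k})=S(S^*\widetilde{M}_{2k})=S(M_{2k})$. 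The main step is then to show that $T$ annihilates the ideal $\mathcal N$ of negligible morphisms. Working in the Krull--Schmidt category $\cU$, I would write a morphism in terms of its components between indecomposable summands. The functor $T$ kills every component that either involves an odd-indexed $M_{2k+1}$ (as $T(M_{2k+1})=0$), or runs between non-isomorphic even indecomposables (as $\Hom(\widetilde M_{2a},\widetilde M_{2b})=0$ for $a\neq b$), or is a radical endomorphism $n$ of some $M_{2a}$ (its associated graded is nilpotent, so $T(n)$ is a nilpotent element of $\End(\widetilde M_{2a})=\fk$, hence $0$). The only surviving components are the scalar endomorphisms of matching even summands; but if $f$ is negligible then, taking $g$ to be the corresponding projection-inclusion, $tr(g\circ f)$ equals that scalar times $\dim M_{2a}=1$, which must vanish. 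Hence $T(f)=0$ for every negligible $f$, and by the universal property of $\overline{\cU}=\cU/\mathcal N$ we obtain a factorization $T\cong\bar T\circ S$ with $\bar T\colon\overline{\cU}\to\Rep(OSp(1|2))$ additive and $\fk$-linear.

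Finally, $\bar T$ and $E^{-1}$ are both $\fk$-linear additive functors out of the semisimple category $\overline{\cU}$, and they agree on the simple objects $S(M_{2k})=E(\widetilde M_{2k})$, sending each to $\widetilde M_{2k}$; hence $\bar T\cong E^{-1}$. Therefore $T\cong E^{-1}\circ S$. As $S$ and $E^{-1}$ are symmetric monoidal, so is $E^{-1}\circ S$, and transporting this structure along the isomorphism endows $T$ with a symmetric monoidal structure under which $T\cong S$, as claimed. I expect the main obstacle to be the negligibility step: one must correctly identify the negligible morphisms of $\cU$ (via the categorical trace in the super-symmetric structure, where nilpotent operators have vanishing supertrace) and verify cleanly that $T$ kills each of them; the verification that $Gr(f)$ is $\osp(1|2)$-linear is the other point, handled by the weight argument above.
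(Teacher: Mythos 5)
Your proof is correct in its essentials, but it takes a genuinely different route from the paper's. The paper never factors $T$ through the quotient by negligibles directly; instead it \emph{constructs} the monoidal structure on $T$ from the filtration, by proving the inclusions $\mathcal F^{2k}(M)\otimes\mathcal F^{2l}(M')\subset\mathcal F^{2(k+l)}(M\otimes M')$ and $\mathcal F^{2k-1}(M)\otimes\mathcal F^{2l}(M')+\mathcal F^{2k}(M)\otimes\mathcal F^{2l-1}(M')\subset\mathcal F^{2(k+l)-1}(M\otimes M')$ (checked on indecomposables), so that the filtration induces natural maps $T(M)\otimes T(M')\to T(M\otimes M')$, shown to be isomorphisms on indecomposables; it then observes that $T$ is a full monoidal functor into a semisimple category and invokes the universal property of semisimplification to conclude $T\cong S$. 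You bypass all the tensor-filtration work: you show $T$ is a plain $\fk$-linear functor killing the ideal $\mathcal N$ (your Krull--Schmidt component analysis is sound: odd-indexed summands die, cross-terms die by Schur, radical endomorphisms die by nilpotence since $\End(\widetilde M_{2a})=\fk$, and negligibility kills the remaining scalars because $\dim M_{2a}=1\neq 0$), factor $T\cong\bar T\circ S$, and pin down $\bar T$ as the inverse of $S\circ S^*$ on simples. Your parenthetical worry is also justified: the convolution of the Deligne filtrations is genuinely not the Deligne filtration of the tensor product for odd $x$ (already $M_1\otimes M_1$ gives $\sum_{i+j=n}\mathcal F^i\otimes\mathcal F^j$ strictly smaller than $\mathcal F^n$), which is exactly why the paper proves only the even-index inclusions rather than full compatibility of filtrations.

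The one substantive caveat is your last step. Your symmetric monoidal structure on $T$ is obtained by \emph{transport} along the plain-functor isomorphism $T\cong (S\circ S^*)^{-1}\circ S$, so nothing in your argument identifies it with the canonical structure coming from the filtration embeddings $Gr^{2k}(M)\otimes Gr^{2l}(M')\to Gr^{2k+2l}(M\otimes M')$. This proves the lemma as literally stated, but it is weaker than what the paper establishes and later uses: in Lemma \ref{lem:aux1} and Lemma \ref{lem:generalfunctor}, $\Phi_i(\g)$ is identified with $Gr_{\mathcal F}(\g)$ \emph{as a graded Lie superalgebra}, whose bracket is the one induced by the filtration maps; with a transported structure that identification is not automatic. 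So to serve its downstream role, your route would still need the check that the natural filtration maps are monoidal structure maps for $T$ --- precisely the computation you set out to avoid. A further minor point: when matching $\bar T$ with $(S\circ S^*)^{-1}$ on simples, the simple objects of $\overline{\cU}$ include the parity shifts $\Pi S(M_{2k})$; the same argument covers them, but they should be accounted for.
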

\begin{example}
We have: 
 $$\forall k \in \Z, \; T(M_{2k+1})=0 , \, T(M_{2k}) = \widetilde{M}_{2k} \; \text{ as vector spaces,}$$
\end{example}

\begin{proof} First, we check that $T$ is a functor. For this we consider a morphism of $\G^{(1|1)}$-modules $\alpha: M\to N$. It induces the morphism $Gr(\alpha):Gr^{ev}(M)\to Gr^{ev}(N)$. Note that $Gr(\alpha)$ commutes with
  action of $X=Gr(x)$ and $h$ and hence with the action of $Y$ by the same argument as in the proof of Lemma \ref{lem:ospstr}. This defines action of $T$ on morphisms and functoriality conditions are straightforward.

 Next, we show that $T$ is monoidal.

 Consider the filtration on $M \otimes M'$, and the subspace $\mathcal{F}^{2k} (M) \otimes \mathcal{F}^{2l}(M')$ for some $k, l$. To determine in which filtration  it sits, it is enough to consider this for indecomposable modules
 $M, M'$. Then for any $k, l$ we determine that $$\mathcal{F}^{2k} (M) \otimes \mathcal{F}^{2l}(M') \subset \mathcal{F}^{2(k+l)}(M\otimes M'). $$

 Now, under this embedding, we have: $$\mathcal{F}^{2k-1} (M) \otimes \mathcal{F}^{2l} (M') + \mathcal{F}^{2k} (M) \otimes \mathcal{F}^{2l-1} (M')  \subset \mathcal{F}^{2(k+l)-1}(M\otimes M') $$ Again,
 it is enough to check this statement for indecomposable modules $M, M'$, where this is a direct consequence of the computation of $\mathcal{F}^i (M_k)$ given above.

This gives us an embedding $Gr^{2k}(M) \otimes Gr^{2l}(M')  \to  Gr^{2k+2l}(M \otimes M') $. 

Hence we have a natural transformation
$$T(M) \otimes T(M')= \bigoplus_{k, l \in \Z} Gr^{2k}(M) \otimes Gr^{2l}(M')  \to \bigoplus_{i \in \Z} Gr^{2i}(M \otimes M') =  T(M \otimes M')  $$
which is an embedding for every $M, M'$.

To check that it is a (natural) isomorphism, one again needs to verify this only for indecomposable $M, M'$, where it is a direct computation. 

We conclude that $T$ is a ($\fk$-linear) monoidal functor. 

Clearly, $T$ is essentially surjective (since {the essential image of $T$ contains all the simple $OSp(1|2)$-modules $\widetilde{M}_{2k}$}) and thus full. Thus it is a full monoidal functor into a semisimple category $\Rep(OSp(1|2))$ and so factors through the functor $S$.
The claim now follows.

\end{proof}

\section{The super Jacobson-Morozov Lemma}\label{sec:JM}
\subsection{Definitions}
\begin{definition}
 Let $V$ be a vector superspace, and $x \in \End(V)$ an odd nilpotent operator. The element $x$ defines an action of $\G^{(1|1)}$ on $V$. 
 
 The element $x$ acts {\it neatly} in $V$ or  if as a $\G^{(1|1)}$-module $V$ decomposes into a direct sum of indecomposables $M_{2k}$ for some $k \in \Z_{\geq 0}$.
\end{definition}

In other words, all indecomposable $\G^{(1|1)}$-summands of $V$ have non-zero (super) dimension.

Let $G$ be an algebraic supergroup, and $\g = \mathrm{Lie}(G)$.
\begin{definition}

\mbox{}

\begin{enumerate}
 \item A nilpotent element $x\in\g_{\bar 1}$ is called {\it neat} if it acts 
neatly in \InnaC{every} finite-dimensional representation of $\g$. 
 \item By $\g_{neat}$ we denote the set of all neat nilpotent elements.
\end{enumerate}
 
\end{definition}

Any nilpotent element $x \in \g_{\bar 1}$ defines a homomorphism $i_x:\G^{(1|1)} \rightarrow G$ of algebraic supergroups, and vice versa. 

We will call such a homomorphism {\it neat} if $x \in \g_{neat}$. Clearly, if $x\in \g_{neat}$ and $x \neq 0$ then $i_x$ is injective. 

Let $R: \Rep(G) \longrightarrow \cU$ be the restriction functor with respect to the inclusion $i_x$. 
The fact that $i_x$ is neat means that $R(M) \in \cU_{neat}$ for any $M \in \Rep(G)$.

\begin{remark}
 The element $0 \in \g_{\bar 1}$ is always neat.
\end{remark}

\begin{example}
Let $G=GL(1|1)$. Then $\g = \gl(1|1) = \End^{\bullet} (\fk^{1|1})$ and $\g_{\bar 1}$ is spanned by $e:=\left(\begin{smallmatrix}
0 &1\\
0 &0                                                                         \end{smallmatrix} \right)$, $f:=\left(\begin{smallmatrix}
0 &0\\
1 &0                                                                         \end{smallmatrix} \right)$.

Then $e,f$ do not act neatly on the faithful $G$-representation $\fk^{1|1}$, so $e,f \notin \g_{neat}$. Thus $\g_{neat} = \{0\}$ in this case.

\end{example}
\begin{example}
Let $G=OSp(1|2)$. Then $\g_{\bar{1}} = \g_{neat}$.

\end{example}
\begin{example}\label{ex:neat_gl_1_2}
Let $G=GL(1|2)$. Then $\g =\gl(1|2) = \End^{\bullet} (\fk^{1|2})$ and the odd nilpotent cone is $$N_{\bar 1}= \left\{\begin{pmatrix}
0 &a &b\\
c &0 &0\\
d &0 &0                                                                        \end{pmatrix} \; : \;\; ac+bd =0\right\} $$
Now, 
$$\g_{neat}=\left\{\begin{pmatrix}0 &a &b\\
c &0 &0\\
d &0 &0                                                                        \end{pmatrix}\in N_{\bar 1} \; : (a,b)\neq (0,0), \,\, (c,d)\neq (0,0)\right\}\cup \{0\}.$$
This is proved by checking which elements of $\g_{\bar 1}$ act neatly on the faithful $G$-representation $\fk^{1|2}$, and using Lemma \ref{lem:neat_criterion}.
\end{example}

\subsection{Main statement}

\begin{theorem}[Super Jacobson-Morozov Lemma]\label{thrm:JM}
Let $G$ be a quasi-reductive algebraic supergroup. Let $i:\G^{(1|1)} \hookrightarrow G$ be a neat injective homomorphism. Then the inclusion $i$ can be extended to an injective homomorphism $\bar{i}: OSp(1|2) \hookrightarrow G$. This  extension is unique up to conjugation by an element of $G_{\bar 0}$.
\end{theorem}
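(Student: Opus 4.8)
The plan is to build the functor $\Phi_x := S \circ R$ and use Deligne's super-Tannakian reconstruction to extract the desired homomorphism $\bar i$, reducing the classical conjugacy statement to a known one for $OSp(1|2)_{\bar 0} = SL_2$.

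\medskip

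\textbf{Existence of $\bar i$.} First I would observe that since $i$ is neat, the restriction functor $R:\Rep(G)\to\cU$ lands in $\cU_{neat}$, so $S\circ R$ factors through a functor $\Phi_x:\Rep(G)\to\overline{\cU}\simeq\Rep(OSp(1|2))$, using the tensor equivalence $S\circ S^*$ established above. The key point is that $\Phi_x$ is not merely a full SM functor but in fact \emph{exact}: this is where neatness is essential. Concretely, I would use the description of $S$ via the Deligne filtration (Lemma \ref{lem:iso}, $T\cong S$): for a neat module, $Gr^{odd}$ vanishes and $\Phi_x(M)\cong\bigoplus_i Gr^{2i}(M)$ as an $\osp(1|2)$-module by Lemma \ref{lem:ospstr}. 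Because $G$ is quasi-reductive, $\Rep(G)$ has enough projectives (Proposition \ref{prop:enough_proj}), and one checks that a short exact sequence of $G$-modules, all of whose terms are neat, restricts to a short exact sequence in $\cU_{neat}$ on which $S$ (being an equivalence onto its image up to the killed summands) behaves exactly. Once $\Phi_x$ is an exact faithful SM functor between pre-Tannakian categories compatible with the $\pi$-action via $\eps:\mu_2\to OSp(1|2)$, Deligne's reconstruction dualizes it into a homomorphism of supergroups $\bar i: OSp(1|2)\to G$. The compatibility $\Phi_x\circ(\text{res along }\bar i)\cong \mathrm{id}$ on the nose forces the differential of $\bar i$ to send the chosen odd generator $X\in\osp(1|2)_{\bar 1}$ to $x$, so $\bar i$ extends $i$; injectivity of $\bar i$ follows since $i$ is already injective (its image contains a nonzero unipotent) and $OSp(1|2)$ is almost simple, so any nonzero normal subscheme in the kernel would contradict $i$ being an embedding.

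\medskip

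\textbf{Uniqueness up to conjugation.} Suppose $\bar i,\bar i'$ both extend $i$. Their differentials agree on $x$ (hence on $x^2=\tfrac12[x,x]$, an even nilpotent), so the two induced maps $\osp(1|2)_{\bar 0}=\mathfrak{sl}_2\to\g_{\bar 0}$ send the nilpositive generator to the same element of $\g_{\bar 0}$. I would then invoke the \emph{classical} Jacobson--Morozov uniqueness for the reductive group $G_{\bar 0}$: the two $\mathfrak{sl}_2$-triples completing this nilpotent even element are conjugate by some $g\in G_{\bar 0}$. Replacing $\bar i'$ by its conjugate $\mathrm{Ad}(g)\circ\bar i'$, I may assume $\bar i,\bar i'$ agree on all of $\osp(1|2)_{\bar 0}$, hence on $h$ and on $X=d\bar i(X)=x$. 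The remaining task is to see they agree on the last odd generator $Y$. But $d\bar i(Y)-d\bar i'(Y)\in\g$ is odd, commutes with $x$, and has $h$-weight $+2$ (equivalently $[x^2,-]$-weight reasoning); by the same lowest-weight argument used in the proof of Lemma \ref{lem:ospstr}, a vector that is a lowest weight vector of strictly positive weight for the $\osp(1|2)$-action generated inside $\g$ must vanish. Hence $d\bar i=d\bar i'$, and since $OSp(1|2)$ is connected with $OSp(1|2)_{\bar 0}$ reductive, equality of differentials forces $\bar i=\bar i'$.

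\medskip

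\textbf{Main obstacle.} The delicate step is establishing \emph{exactness} of $\Phi_x$, since $S$ itself is non-exact in general; the entire weight of the hypothesis ``neat'' is spent here. I expect to argue that on the additive subcategory generated by neat modules the functor $S$ restricts to an exact (indeed faithful, essentially surjective) functor because no indecomposable summand is killed and the Clebsch--Gordan decompositions computed above show that tensoring and the relevant sub/quotient operations stay within $\cU_{neat}$ with predictable graded pieces. I would verify exactness on short exact sequences $0\to M'\to M\to M''\to 0$ in $\Rep(G)$ by passing to the Deligne filtration, where strictness of morphisms with respect to the canonical filtration (analogous to strictness in Hodge theory) guarantees that $Gr^{ev}$ is exact on neat modules; the only genuine content is checking this strictness, which reduces to the indecomposable case already handled in the filtration computation $\mathcal F^\bullet(M_k)$. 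The uniqueness direction is, by contrast, essentially a bookkeeping reduction to classical $SL_2$ theory plus the positive-weight vanishing lemma, and I anticipate no serious difficulty there.
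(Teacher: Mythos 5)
Your overall architecture (set $\Phi_x=S\circ R$, prove it is an exact SM functor, then apply super-Tannakian reconstruction) is the same as the paper's, but the step you yourself identify as carrying ``the entire weight of the hypothesis'' is wrong. It is \emph{not} true that $S$ (equivalently $T=Gr^{ev}$) is exact on short exact sequences in $\cU$ all of whose terms are neat: strictness of the Deligne filtration genuinely fails for such sequences, and no reduction to indecomposables can repair this, because the failure is a phenomenon of non-split extensions. Explicit counterexample: let $E$ have homogeneous basis $b_0,b_1,b_2,a_0,a_1,a_2$ with parities $\overline{a_j}=\overline{b_j}=j \bmod 2$, and $x b_0=b_1$, $x b_1=b_2$, $x b_2=a_1$, $x a_0=a_1$, $x a_1=a_2$, $x a_2=0$. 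Then $\mathrm{span}(a_0,a_1,a_2)\cong M_2$ is a submodule with quotient isomorphic to $M_2$, while $E=\mathrm{span}(b_0,b_1,b_2,a_1,a_2)\oplus \fk(a_0-b_2)\cong M_4\oplus M_0$. So $0\to M_2\to M_4\oplus M_0\to M_2\to 0$ is a non-split short exact sequence in $\cU$ with all three terms neat; applying $S$ kills \emph{both} maps outright, since by Schur's lemma in the semisimple category $\Rep(OSp(1|2))$ there are no nonzero morphisms between $\widetilde{M}_2$ and $\widetilde{M}_4\oplus\widetilde{M}_0$. Filtration-theoretically, the inclusion sends $a_0$ (sitting in filtration degree $2$ of $M_2$) into $\mathcal F^{0}(E)$, so the induced map on $Gr^2$ vanishes: strictness fails. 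Hence neatness of the terms of a sequence is simply not a sufficient hypothesis, and the paper's exactness proof must (and does) use quasi-reductivity of $G$ in an essential, different way: tensor the sequence with a nonzero projective $P\in\Rep(G)$, note that the resulting sequence of projectives splits, apply the additive monoidal functor $\Phi_i$, and then cancel the nonzero factor $\Phi_i(P)$ using that $\Phi_i(P)\otimes-$ is exact and faithful in the tensor category $\Rep(OSp(1|2))$. (Your counterexample sequence can never arise by restriction from a quasi-reductive $G$ along a neat embedding --- but that is a \emph{consequence} of the theorem, not an ingredient available to prove it.)

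There are two further gaps. First, Tannakian reconstruction applied to $\Phi_i$ only yields a homomorphism $\varphi:OSp(1|2)\to G$ with $R_\varphi\cong\Phi_i$ \emph{as functors}; it does not force $d\varphi(X)=x$ ``on the nose,'' and your assertion that the compatibility does so is unsupported. The paper needs two extra lemmas precisely here: the Deligne filtrations of $\g$ defined by $i(x)$ and by $\varphi(X)$ agree after an automorphism of $\g$, and any two neat elements inducing the same Deligne filtration on $\g$ are $G_{\bar 0}$-conjugate (an open-orbit argument inside the odd part of $\mathcal F^{-2}(\g)$); only then can $\varphi$ be corrected to an honest extension $\bar i$ of $i$. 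Second, your uniqueness argument breaks at the conjugation step: classical Jacobson--Morozov gives $g\in G_{\bar 0}$ centralizing the even nilpotent $[x,x]$ and conjugating one $\mathfrak{sl}_2$-triple into the other, but such $g$ need not fix the odd element $x$ itself, so after replacing $\bar i'$ by $\mathrm{Ad}(g)\circ\bar i'$ you may no longer assume the two maps agree on $X$, and your subsequent lowest-weight argument for $Y$ has no starting point. The paper's uniqueness proof avoids this: if $\bar i_1,\bar i_2$ agree on $X$, their restrictions to $\G^{(1|1)}$ coincide, so $S^*\circ R_1=S^*\circ R_2$; composing with $S$ and using that $S\circ S^*$ is an equivalence gives $R_1\cong R_2$, and Tannakian formalism then yields the conjugacy.
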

% \begin{remark}
%  Equivalently, we may require that every indecomposable direct summand of $R(V)$ has odd dimension as a vector space.
% \end{remark}

\begin{proof}
  Let  $R: \Rep(G)\to \Rep(\G^{(1|1)})$ be the restriction functor associated with the inclusion $i:\G^{(1|1)} \hookrightarrow G$.

  Recall that the category $\Rep(G)$ has enough projective objects since $G$ is a quasi-reductive supergroup ({see Proposition \ref{prop:enough_proj}}). 

Now, since we assumed that $i$ is neat, we have: for every projective object $P \in \Rep(G)$, $R(P) \in \cU_{neat}$.

We will show that $$\Phi_i:=S \circ R: \Rep(G) \to \overline{\cU} \cong \Rep(OSp(1|2))$$ is an exact SM $\fk$-linear functor, hence inducing a homomorphism $\bar{i}: OSp(1|2) \hookrightarrow G$ \InnaC{by Remark \ref{rmk:conn_Tannakian}}. It will clearly be injective (since the supergroup $OSp(1|2)$ is simple). 

Since both $R, S$ are SM and $\fk$-linear (hence additive), so is the functor $\Phi_i$. So we only need to prove that $\Phi_i$ is exact.

First, notice that $\Phi_i(P) \neq 0$ for any projective $G$-module $P \neq 0$. Indeed, $R$ is faithful, and $S$ is faithful on the subcategory $\cU_{neat}$ to which $R(P)$ belongs.

Secondly, let $$ 0 \to M' \to M \to M'' \to 0$$ be a short exact sequence in $\Rep(G)$. Then for any projective $G$-module $P \neq 0$, $$ 0 \to P\otimes M' \to P \otimes M \to P \otimes M'' \to 0$$ is a split exact sequence of projective $G$-modules. Applying $\Phi_i$, we obtain a split exact sequence
$$ 0 \to \Phi_i(P)\otimes \Phi_i(M') \to \Phi_i(P) \otimes \Phi_i(M) \to \Phi_i(P) \otimes \Phi_i(M'') \to 0$$ in $\overline{\cU} \cong \Rep(OSp(1|2))$.

Now, $\overline{\cU} \cong \Rep(OSp(1|2))$ is a tensor category, so for any $X \in \overline{\cU}$, the endofunctor $X \otimes -$ is faithful and exact whenever $X \neq 0$.

Thus $\Phi_i(P) \otimes - $ is faithful and exact, so $$ 0 \to  \Phi_i(M') \to  \Phi_i(M) \to \Phi_i(M'') \to 0$$ is a short exact sequence in $\overline{\cU} \cong \Rep(OSp(1|2))$. This proves that $\Phi_i$ is exact.

Since $\Phi_i$ is an exact functor, it is isomorphic to the restriction functor $R_\varphi$ associated to some homomorphism (embedding) $\varphi:OSp(1|2)\to G$ \InnaC{(see Remark \ref{rmk:conn_Tannakian})}.

We fix  standard generators $h,{X, Y}$ in $\mathfrak{osp}(1|2)$ {as in Section \ref{ssec:osp}}, and consider the subgroup $\G^{(1|1)}$ with the Lie algebra generated by ${X}$. 

{Consider the following functors:
$$\xymatrix{&\Rep(G) \ar[rrr]^{R} \ar[rrrd]|{\;\; \Phi_i\;\; } \ar[rd]_{R_{\varphi}} &{} &{} &\cU \ar[d]^{S} \\ &{} &\Rep(OSp(1|2)) \ar[rr]_-{S\circ S^*}  &{} &\overline{\cU} }$$
}
Then we have an isomorphism of functors {$\Rep(G) \to \Rep(OSp(1|2))$}
$$S\circ S^*\circ R_\varphi\cong \Phi_{i},$$
where $S^*\circ R_\varphi$ is the restriction of $\varphi$ to our chosen $\G^{(1|1)}\subset \Rep(OSp(1|2))$. {This makes the diagram of functors above commutative}. 
On every $M\in\Rep(G)$ we have two Deligne filtrations $\mathcal F_{i(x)}(M)$ and $\mathcal F_{\varphi({X})} (M)$. 
\begin{lemma}\label{lem:aux1} There exists an automorphism $\psi$ of $\g$ such that the filtrations $\mathcal F_{i(x)}(\g)$ and $\mathcal F_{\psi\varphi({X})} (\g)$ coincide.
\end{lemma}

\begin{proof} Using Lemma \ref{lem:ospstr} we have $\Phi_i(\g)=\operatorname{Gr}_{\mathcal F_{i(x)}}\g$ and $S\circ S^*\circ{R_{\varphi}}(\g)=\operatorname{Gr}_{\mathcal F_{\varphi(X)}}\g$.
  Both these Lie superalgebras are isomorphic to $\g$, moreover they are isomorphic to each other as graded {Lie super}algebras. 
  This precisely means that the exists an automorphism
  $\psi$ of $\g$ such that
  $$\psi\mathcal F_{\varphi({X})}(\g)=\mathcal F_{\psi\varphi({X})}(\g)=\mathcal F_{i(x)}(\g).$$
\end{proof}

\comment{

}

\begin{lemma}\label{lem:aux2}
 Let $x_1, x_2 \in \g$ be two neat elements which define the same Deligne filtration $\mathcal F^{\bullet}(\g)$ on $\g$. Then $x_1, x_2$ are conjugate with respect to $G_{\bar{0}}$.
\end{lemma}
\begin{proof}
  First, let us note that since $x_1,x_2$ are neat, both must lie in $\mathcal F^{-2}(\g)$ but not in $\mathcal F^{-4}(\g)$.
  
  Let $G_f \subset G_{\bar 0}$ be the subgroup preserving the filtration $\mathcal F^{\bullet}(\g)$. 
  We have: $\mathrm{Lie}(G_f) \cong
  \mathcal F^{0}_{\bar 0}(\g)$ (the even part of $\mathcal F^{0}(\g)$). 
 
 Consider the subspace $Q$ of odd elements in $\mathcal F^{-2}(\g)$. Then $x_1, x_2 \in Q$. The group $G_f$ acts on $Q$. The orbits $O_1, O_2$ of $x_1, x_2$ under this action are open, since they have the same tangent space:
 $$T_{x_i} O_i \cong [Lie (G_f), x_i] = \mathcal F^{-2}_{\bar 1}(\g)$$
{(the last equality follows from the fact that $x_1, x_2$ define the same Deligne filtration $\mathcal F^{\bullet}(\g)$).}
 Since $O_1, O_2$ are Zariski open subsets of a vector space, they are dense. Hence they intersect and thus coincide. 
\end{proof}

Together, Lemmas \ref{lem:aux1}, \ref{lem:aux2} imply that $i(x)=\gamma\varphi({X})$ for {some} automorphism $\gamma$ of $\g$. Then $\bar i:=\gamma\varphi$ is the desired extension of $i$.

 Finally, let us show that $\bar i$ is unique up to conjugation in $G_{\bar 0}$. Indeed, let inclusions $\bar i_1,\bar i_2:\mathfrak{osp}(1|2)\to \g$ coincide on ${X \in \mathfrak{osp}(1|2)}$ and denote by $R_1,R_2$ corresponding restriction functors
 $\Rep(G)\to\Rep(OSp(1|2))$. We have an isomorphism of functors $S^*\circ R_1\cong S^*\circ R_2$ which after composing with $S$ produces an isomorphism $R_1\cong R_2$. By Tannakian formalism this implies that
 $\bar i_1$ and $\bar i_2$ are conjugate. 
\end{proof}

The following statement follows from the proof of  Theorem \ref{thrm:JM}. 

\begin{corollary}\label{lem:conj}
 Let $x\in \g_{\bar{1}}$ be a nilpotent element given by the embedding $i$ in Theorem  \ref{thrm:JM}. Then $x$ is $G_{\bar{0}}$-conjugate to $\varphi({X})$.
\end{corollary}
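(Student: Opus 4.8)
The plan is to read the conjugacy off from the construction already carried out in the proof of Theorem~\ref{thrm:JM}, and then to upgrade the abstract Lie-algebra automorphism appearing there to a genuine $G_{\bar 0}$-conjugation. Recall that in that proof we produced an embedding $\varphi\colon OSp(1|2)\to G$ with $R_\varphi\cong\Phi_i$, and that combining Lemmas~\ref{lem:aux1} and~\ref{lem:aux2} we obtained $i(x)=\gamma\varphi(X)$ for some automorphism $\gamma$ of $\g$, with $\bar i=\gamma\varphi$ the extension of $i$; in particular $x=i(x)=\bar i(X)$. What remains is to see that the automorphism relating $x$ and $\varphi(X)$ can be taken inside $\Ad(G_{\bar 0})$.

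First I would check that $\varphi(X)$ is itself neat. Indeed, restriction along $\varphi$ sends every finite-dimensional $G$-module to a (necessarily semisimple) $OSp(1|2)$-module, i.e.\ to a direct sum of the simples $\widetilde{M}_{2k}$, and a further restriction to the copy of $\G^{(1|1)}$ generated by $X$ carries each $\widetilde{M}_{2k}$ to $S^*(\widetilde{M}_{2k})\cong M_{2k}\in\cU_{neat}$. Hence $\varphi(X)$ acts neatly on every finite-dimensional representation of $G$, so $\varphi(X)\in\g_{neat}$.

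Next I would compare the two Deligne filtrations that $x=i(x)$ and $\varphi(X)$ define on $\g$. Evaluating the isomorphism $\Phi_i\cong S\circ S^*\circ R_\varphi$ on the adjoint representation and using Lemmas~\ref{lem:ospstr} and~\ref{lem:iso}, the associated graded Lie superalgebras $\operatorname{Gr}_{\mathcal F_{i(x)}}\g$ and $\operatorname{Gr}_{\mathcal F_{\varphi(X)}}\g$ are isomorphic as graded Lie superalgebras. By the argument of Lemma~\ref{lem:aux1} this yields an automorphism $\psi$ of $\g$ with $\mathcal F_{\psi\varphi(X)}(\g)=\mathcal F_{i(x)}(\g)$; since $\psi\varphi(X)$ and $i(x)$ are both neat and define the same filtration, Lemma~\ref{lem:aux2} shows that they are $G_{\bar 0}$-conjugate. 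Thus $x$ is $G_{\bar 0}$-conjugate to $\psi\varphi(X)$.

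The main obstacle is therefore the final reduction from $\psi\varphi(X)$ back to $\varphi(X)$, i.e.\ removing the abstract automorphism $\psi$. I would handle this by showing that the filtrations $\mathcal F_{\varphi(X)}(\g)$ and $\mathcal F_{x}(\g)$ are already conjugate under $G_{\bar 0}$: both arise as Deligne filtrations of neat elements with isomorphic graded $\osp(1|2)$-structure, and since $G_{\bar 0}$ is reductive it acts transitively on the filtrations of a fixed such type. Choosing $g\in G_{\bar 0}$ with $\Ad(g)\,\mathcal F_{\varphi(X)}(\g)=\mathcal F_{x}(\g)$, the element $\Ad(g)\varphi(X)$ is neat and defines $\mathcal F_x(\g)$, so Lemma~\ref{lem:aux2} gives $\Ad(g)\varphi(X)\sim_{G_{\bar 0}}x$ and hence $\varphi(X)\sim_{G_{\bar 0}}x$, as claimed. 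The transitivity statement is the one point needing genuine verification; everything else is a direct extraction from the proof of Theorem~\ref{thrm:JM}.
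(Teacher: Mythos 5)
Your first three paragraphs are correct, but they only reproduce what the proof of Theorem \ref{thrm:JM} already establishes: $\varphi(X)$ is neat, and Lemmas \ref{lem:aux1}, \ref{lem:aux2} give $x=\Ad(g)\,\psi\varphi(X)$ for some $g\in G_{\bar 0}$ and some \emph{abstract} automorphism $\psi$ of $\g$. The entire content of the corollary is the removal of $\psi$, and that is exactly where your argument breaks down. The transitivity claim you invoke --- that $G_{\bar 0}$ acts transitively on Deligne filtrations ``of a fixed such type'', where type means the isomorphism class of the graded Lie superalgebra with its $\osp(1|2)$-structure --- is not proved (you flag it yourself), and it is false as a general principle about reductive groups. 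Already in the purely even analogue it fails: the very even nilpotent orbits of $\mathfrak{so}_8$ (partitions $[2,2,2,2]$ or $[4,4]$) come in pairs exchanged by an outer automorphism, so the two nilpotents have isomorphic graded Lie algebra structures and filtrations of the same abstract type, yet they are not $SO_8$-conjugate; since the argument of Lemma \ref{lem:aux2} works verbatim in the even case, their filtrations cannot be $SO_8$-conjugate either. Note, moreover, that your claim together with Lemma \ref{lem:aux2} would prove that \emph{any} two neat elements with abstractly isomorphic graded data are $G_{\bar 0}$-conjugate --- a statement far stronger than the corollary, which concerns only the particular pair $(x,\varphi(X))$. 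What makes the corollary true is precisely the way $\varphi$ was manufactured from $x$, and your reduction throws that information away.

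The derivation the paper intends uses the Tannakian mechanism from the uniqueness step of the theorem, not a comparison of filtrations. Since $\bar i(X)=i(x)=x$, the homomorphism $\G^{(1|1)}\to G$ obtained by restricting $\bar i$ to the subgroup generated by $X$ is equal to $i$; hence $S^*\circ R_{\bar i}=R$ on the nose, where $R_{\bar i}$ denotes restriction along $\bar i$ and $R$ is restriction along $i$. Therefore
$$S\circ S^*\circ R_{\bar i}=S\circ R=\Phi_i\cong S\circ S^*\circ R_\varphi,$$
and since $S\circ S^*$ is an equivalence, $R_{\bar i}\cong R_\varphi$ as tensor functors $\Rep(G)\to\Rep(OSp(1|2))$. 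By Tannakian formalism (exactly as in the last paragraph of the proof of Theorem \ref{thrm:JM}), $\bar i$ and $\varphi$ are conjugate by an element $g$ of $G(\fk)=G_{\bar 0}(\fk)$, and evaluating at $X$ gives $x=\bar i(X)=\Ad(g)\varphi(X)$. You should replace your final paragraph by this argument; as written, your proposal leaves its decisive step unjustified and resting on a false general principle.
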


\begin{remark}
 The converse to Theorem \ref{thrm:JM} is also true: given a homomorphism $i: \G^{(1|1)} \to G$ which extends to a homomorphism $\bar{i}:OSp(1|2) \to G$, the homomorphism $i$ is neat. This follows from the fact that the  corresponding restriction functor $R: \Rep(G) \to \cU$ factors through the restriction functor $S^*: \Rep(OSp(1|2)) \to \cU$, so $R(M) \in \cU_{neat}$ for all $M \in \Rep(G)$. 
\end{remark}

\subsection{A corollary}

The following corollary of Theorem \ref{thrm:JM} for quasi-reductive supergroups $G$ may be considered as a generalization of the Kostant theorem, which states that there are finitely many
nilpotent orbits in the adjoint representation of a semisimple Lie algebra. 

\begin{proposition}\label{prop:orbits_neat}
  Let $G$ be a quasi-reductive supergroup. Then there are finitely many $G_{\bar 0}$-orbits of neat inclusions $i: \G^{(1|1)} \hookrightarrow G$.
  Equivalently,
  $\g_{neat}$ has finitely many $G_{\bar 0}$-orbits under the adjoint action.
\end{proposition}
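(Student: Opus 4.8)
The goal is to show that $\g_{neat}$ has finitely many $G_{\bar 0}$-orbits under the adjoint action, for a quasi-reductive supergroup $G$. The key structural tool is Theorem~\ref{thrm:JM} together with its Corollary~\ref{lem:conj}: every nonzero neat element $x \in \g_{neat}$ is $G_{\bar 0}$-conjugate to $\varphi(X)$ for some embedding $\varphi: OSp(1|2) \hookrightarrow G$. So the plan is to reduce the problem to a finiteness statement about $OSp(1|2)$-embeddings, and from there to the classical Jacobson--Morozov/Kostant finiteness on the even part.

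The plan is as follows. First I would reduce counting orbits of neat elements to counting conjugacy classes of neat embeddings $i: \G^{(1|1)} \hookrightarrow G$, which is the stated equivalence: a nonzero neat element $x$ determines $i_x$ and conversely, and two neat elements are $G_{\bar 0}$-conjugate iff the corresponding embeddings are. By Corollary~\ref{lem:conj}, each such $x$ is $G_{\bar 0}$-conjugate to $\varphi(X)$ where $\varphi: OSp(1|2) \hookrightarrow G$ extends $i_x$. Thus it suffices to bound the number of $G_{\bar 0}$-conjugacy classes of the elements $\varphi(X)$ as $\varphi$ ranges over all embeddings $OSp(1|2) \hookrightarrow G$. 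Now the crucial observation is that the even part $\varphi(X)^2 = \tfrac{1}{2}\varphi([X,X]) = \varphi(x^2)$ is an even nilpotent element of $\g_{\bar 0} = \mathrm{Lie}(G_{\bar 0})$, and it lies in the image of the $\mathfrak{sl}_2$-triple $\varphi|_{\mathfrak{sl}_2}$; in fact $\varphi$ restricts to a homomorphism $SL_2 = OSp(1|2)_{\bar 0} \to G_{\bar 0}$ of the even parts, so $\varphi(X)^2$ is a nilpotent element in the image of an $\mathfrak{sl}_2$-triple in the reductive Lie algebra $\g_{\bar 0}$.

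The second main step is to invoke the classical Kostant finiteness theorem on the reductive group $G_{\bar 0}$: there are finitely many $G_{\bar 0}$-orbits of nilpotent elements in $\g_{\bar 0}$, equivalently finitely many $G_{\bar 0}$-conjugacy classes of $\mathfrak{sl}_2$-triples (hence of homomorphisms $SL_2 \to G_{\bar 0}$). This handles the even datum $\varphi(X)^2$ and its $\mathfrak{sl}_2$-triple up to finitely many choices. The remaining task is to control the odd part $\varphi(X)$ given its square: I would fix a representative even nilpotent $e \in \g_{\bar 0}$ with its $\mathfrak{sl}_2$-triple $(e,h,f)$ and ask how many $Z_{G_{\bar 0}}(\mathfrak{sl}_2)$-orbits (or $G_{\bar 0}$-orbits with fixed even part) of odd elements $\xi \in \g_{\bar 1}$ satisfy $\xi^2 = e$ together with the $\osp(1|2)$-relations $[h,\xi] = -2\xi$. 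The set of such $\xi$ is a closed subvariety of the finite-dimensional weight space $(\g_{\bar 1})_{-2}$ (the $h$-weight $-2$ odd space), acted on by the reductive group $Z_{G_{\bar 0}}(e,h,f)$. The key point is that this action has finitely many orbits; here I would use that the relevant orbits are again open in an affine variety by the same tangent-space/density argument as in Lemma~\ref{lem:aux2}, so distinct orbits of neat $\xi$ are open and a finite-dimensional variety has only finitely many disjoint open dense-in-their-closure orbits of top dimension — more carefully, one stratifies and induces on dimension.

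The step I expect to be the main obstacle is precisely this last finiteness for the odd part: bounding the number of $Z_{G_{\bar 0}}(e,h,f)$-orbits of odd square roots $\xi$ of a fixed even nilpotent $e$. The cleanest route may be to argue directly that neatness forces $\xi$ to lie in a single open orbit, or to package the whole datum as a homomorphism and use Tannakian rigidity: Theorem~\ref{thrm:JM} already asserts that the extension $\bar\imath$ is unique up to $G_{\bar 0}$-conjugation, so the $G_{\bar 0}$-orbit of the \emph{pair} $(\varphi(X), \varphi(X)^2)$ is determined by the $G_{\bar 0}$-conjugacy class of the embedding $\varphi$, and the latter is determined (again up to finite ambiguity) by its even restriction to $SL_2$ via the reductivity of $G_{\bar 0}$ and the rigidity of reductive-group homomorphisms. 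I would therefore aim to phrase the final count as: the map sending a neat $G_{\bar 0}$-orbit to the $G_{\bar 0}$-orbit of $\varphi(X)^2 \in \g_{\bar 0}$ has finite fibers and finite image, the image finiteness being Kostant and the fiber finiteness being the uniqueness clause of Theorem~\ref{thrm:JM} combined with reductivity of the centralizer.
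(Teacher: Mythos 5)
Your reduction is sound as far as it goes: by Theorem \ref{thrm:JM} and Corollary \ref{lem:conj}, nonzero neat orbits correspond bijectively to conjugacy classes of embeddings $OSp(1|2)\hookrightarrow G$, the square $x^2=\tfrac12[x,x]$ of a neat element is an even nilpotent in the reductive Lie algebra $\g_{\bar 0}$, and the image finiteness of the map $\{\text{neat orbits}\}\to\{\text{even nilpotent orbits}\}$ is classical Kostant. The genuine gap is exactly the step you flag as the main obstacle --- finiteness of the fibers --- and neither mechanism you propose closes it. The uniqueness clause of Theorem \ref{thrm:JM} compares two extensions of the \emph{same} inclusion $i$ (the same neat element $x$); it says nothing about two distinct neat elements $x_1\neq x_2$ whose squares are conjugate: these correspond to different embeddings $\varphi_1,\varphi_2$, and a priori there could be a continuum of pairwise non-conjugate ones. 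Reductivity of the centralizer $Z_{G_{\bar 0}}(e,h,f)$ acting on the variety of odd square roots $\{\xi\in\g_{\bar 1}: [h,\xi]=-2\xi,\ \xi^2=e\}$ also gives nothing by itself: reductive groups routinely act on affine varieties with positive-dimensional quotients, hence infinitely many orbits. Finally, the density argument of Lemma \ref{lem:aux2} does not transfer as stated: there, the two neat elements were assumed to define the \emph{same} Deligne filtration, so both orbits were open subsets of the vector space $\mathcal F^{-2}_{\bar 1}(\g)$ and hence dense; your variety of square roots is not a vector space, and openness of the centralizer orbits in it is precisely the assertion that needs proof. What would actually prove it is a Weil-type rigidity argument: $G_{\bar 0}$ acts on the affine variety of Lie superalgebra homomorphisms $\osp(1|2)\to\g$; at any homomorphism the tangent space to the variety consists of even $1$-cocycles, the tangent space to the orbit consists of even coboundaries, and $H^1(\osp(1|2),\g)=0$ by complete reducibility of finite-dimensional $\osp(1|2)$-modules, so all orbits are open and hence finite in number. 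Note that this argument, once supplied, makes your fibration over even nilpotent orbits unnecessary.

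For comparison, the paper's proof avoids both the fibration and explicit cohomology. It fixes a faithful $G$-module $V$, so $\g\subset\gl(V)$, and observes that $\gl(V)_{neat}$ has finitely many $GL(V)_{\bar 0}$-orbits for the trivial reason that such orbits are classified by the finitely many isomorphism types of $\G^{(1|1)}$-module structures on $V$. Then, for a $GL(V)_{\bar 0}$-orbit $O$ and $x\in O\cap\g_{\bar 1}$ (neat by Lemma \ref{lem:neat_criterion}), Theorem \ref{thrm:JM} places $x$ inside a subalgebra $\osp(1|2)\subset\g$, semisimplicity of $\Rep(\osp(1|2))$ yields an invariant splitting $\gl(V)=\g\oplus W$ with $[x,W_{\bar 0}]\subset W_{\bar 1}$, and therefore $T_xO\cap\g=[\gl(V)_{\bar 0},x]\cap\g=[\g_{\bar 0},x]=T_x(G_{\bar 0}x)$; consequently $O\cap\g$ is a finite union of $G_{\bar 0}$-orbits, and finiteness follows. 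The complete-reducibility input is the same as in the rigidity route, but the orbit counting is done inside $\gl(V)$, where it is immediate, rather than fiber-by-fiber over even nilpotent orbits.
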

\begin{remark}
 Equivalently, there exist only finitely many isomorphism classes of tensor functors $R: \Rep(G) \to \Rep(\G^{(1|1)})$ whose image lies in $\cU_{neat}$.
\end{remark}
\begin{remark}
{In general, the cone of odd nilpotent elements in $\g$ might have infinitely many orbits.}
\end{remark}

\begin{proof} Let $V$ be a faithful finite-dimensional $G$-module {(it exists by Remark \ref{rmk:faithful})}. Consider the inclusion $\g\subset\mathfrak{gl}(V)$. There are only finitely many $GL(V)_{\bar 0}$-orbits in
  $\mathfrak{gl}(V)_{neat}$ since there are finitely many non-equivalent representations of $\G^{(1|1)}$ in $V$. 
  
  Let $O$ be some $GL(V)_{\bar 0}$-orbit with non-trivial
  intersection with $\g_{\bar 1}$. We will prove that for any $x\in O\cap\g$ the tangent space $T_x(G_{\bar 0}x)$ coincides with $T_xO\cap\g$. 
  
  Indeed, {let $x \in O \cap \g_{\bar 1}$. By Lemma \ref{lem:neat_criterion}, we have: $x \in \g_{neat}$.}
  Consider the embedding
  $x\in\mathfrak{osp}(1|2)\subset \g$. Since $\Rep (\mathfrak{osp}(1|2))$ is semisimple we have the $\mathfrak{osp}(1|2)$-invariant decomposition $\mathfrak{gl}(V)=\g\oplus W$.
  That, in particular, implies $[x,W_{\bar 0}]\subset W_{\bar 1}$. Therefore we have
  $$T_xO\cap\g=[\mathfrak{gl}(V)_{\bar 0},x]\cap\g=([\g_{\bar 0},x]\oplus [W_{\bar 0},x])\cap \g=[\g_{\bar 0},x]=T_x(G_{\bar 0}x).$$
  That implies $\dim(O\cap\g)=\dim G_{\bar 0}x$ for any $x\in O\cap\g$. Therefore  $O\cap\g$ is a disjoint union of finitely many $G_{\bar 0}$-orbits. The statement follows.
  \end{proof}

\subsection{On the set \texorpdfstring{$\g_{neat}$}{g(neat)}}
  \begin{proposition}\label{prop:neatclassification} Let $\g$ be a Lie superalgebra such that $\g_{neat}=\g_{\bar 1}$. Then
    $$\g\cong \g'\oplus\mathfrak{osp}(1|2m_1)\oplus\dots\oplus\mathfrak{osp}(1|2m_k)$$
  for some $m_1,\dots,m_k\in\mathbb N$ and a Lie algebra $\g'$.
\end{proposition}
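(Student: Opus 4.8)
The plan is to peel off the subalgebra generated by the odd part and recognize it as a sum of orthosymplectic algebras. Set $\mathfrak{s}:=\g_{\bar 1}\oplus[\g_{\bar 1},\g_{\bar 1}]$; the Jacobi identity shows at once that $\mathfrak{s}$ is an ideal of $\g$ with $\mathfrak{s}_{\bar 1}=\g_{\bar 1}$ and $\mathfrak{s}_{\bar 0}=[\g_{\bar 1},\g_{\bar 1}]$. I would reduce the whole statement to the single claim that $\mathfrak{s}\cong\bigoplus_i\osp(1|2m_i)$. Granting this, each factor is simple with semisimple representation category, so $\mathfrak{s}$ acts completely reducibly on $\g$ by the adjoint action; one then checks that any nontrivial irreducible $\mathfrak{s}$-submodule of $\g$ meeting $\mathfrak{s}$ trivially must be purely even and annihilated by $\mathfrak{s}_{\bar 1}$, hence also by $\mathfrak{s}_{\bar 0}=[\mathfrak{s}_{\bar 1},\mathfrak{s}_{\bar 1}]$, so it is trivial, a contradiction. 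Thus $\g=\mathfrak{s}\oplus C_{\g}(\mathfrak{s})$, the complement $\g':=C_{\g}(\mathfrak{s})$ is an ideal, and it is even because its odd part lies in $\g_{\bar 1}\cap C_{\g}(\mathfrak{s})\subseteq Z(\mathfrak{s})=0$. This yields the desired splitting, so everything reduces to the structure of $\mathfrak{s}$.

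The key input from neatness is an anisotropy statement. Realize $\g\subseteq\gl(V)$ for a faithful finite-dimensional representation $V$. By Lemma \ref{lem:neat_criterion} every nonzero $x\in\g_{\bar 1}$ acts neatly and nilpotently on $V$, so $V\cong\bigoplus_j M_{2k_j}$ as a $\G^{(1|1)}$-module with some $k_j\geq 1$; hence $x^2\neq 0$ on $V$ and $[x,x]\neq 0$ in $\g$. Therefore the quadratic map $x\mapsto[x,x]$ has no nonzero isotropic vectors, and in particular no nonzero $x\in\g_{\bar 1}$ centralizes all of $\g_{\bar 1}$ (that would force $[x,x]=0$). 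Equivalently, the symmetric bracket $\g_{\bar 1}\otimes\g_{\bar 1}\to\mathfrak{s}_{\bar 0}$ is nondegenerate. I would also record two facts for later use: $[x,x]$ acts nilpotently in every representation (a single Jordan block on each $M_{2k}$), and, by Theorem \ref{thrm:JM} applied inside the quasi-reductive group $GL(V)$, each such $x$ is the odd nilpotent generator of an $\osp(1|2)$-triple acting on $V$.

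Next I would show that $\mathfrak{s}_{\bar 0}$ is reductive and $\g_{\bar 1}$ is a sum of standard symplectic representations. The adjoint action gives $\rho:\mathfrak{s}_{\bar 0}\to\gl(\g_{\bar 1})$, and the supertrace form on $V$ restricts to a $\g_{\bar 0}$-invariant antisymmetric form $\omega$ on $\g_{\bar 1}$; combining invariance with the anisotropy above one argues $\omega$ is nondegenerate, so $\rho$ lands in $\mathfrak{sp}(\g_{\bar 1},\omega)$. The crucial point is that $\g_{\bar 1}$ is a completely reducible $\mathfrak{s}_{\bar 0}$-module: this I would deduce from the $\osp(1|2)$-triples of the previous paragraph, whose even parts generate a reductive subgroup of $GL(V)_{\bar 0}$ acting semisimply on $\g_{\bar 1}$. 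Since $\rho$ is then a completely reducible representation with $\ker\rho=Z(\mathfrak{s})_{\bar 0}$ (which one rules out using anisotropy and faithfulness of $V$), the algebra $\mathfrak{s}_{\bar 0}$ has a faithful completely reducible representation and is therefore reductive. Decomposing $(\g_{\bar 1},\omega)$ into $\mathfrak{s}_{\bar 0}$-irreducible symplectic summands $W_i$ then presents $\g_{\bar 1}$ as a sum of standard symplectic modules.

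Finally I would pin down the bracket and assemble the answer. The odd--odd bracket is an $\mathfrak{s}_{\bar 0}$-equivariant symmetric map $S^2\g_{\bar 1}\to\mathfrak{s}_{\bar 0}$ whose image generates $\mathfrak{s}_{\bar 0}$; on each block $S^2 W_i\cong\mathfrak{sp}(W_i)$ is the irreducible adjoint module, so by Schur the bracket is, up to scalar, the orthosymplectic moment map, while the super-Jacobi identity for three odd elements fixes the scalar and forces the cross terms $[W_i,W_j]$ for $i\neq j$ to vanish. This identifies the subalgebra generated by $W_i$ with $\osp(1|2m_i)$, where $2m_i=\dim W_i$, and gives $\mathfrak{s}\cong\bigoplus_i\osp(1|2m_i)$, completing the proof via the first paragraph. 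I expect the genuine difficulty to sit in the third paragraph: converting neatness into the complete reducibility of $\g_{\bar 1}$ as an $\mathfrak{s}_{\bar 0}$-module and the nondegeneracy of $\omega$, that is, ruling out a non-reductive $\mathfrak{s}_{\bar 0}$. This is precisely where one must use neatness beyond the bare anisotropy of $x\mapsto[x,x]$, leaning on the Jacobson--Morozov embeddings of Theorem \ref{thrm:JM} inside $\gl(V)$.
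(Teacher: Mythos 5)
Your first two paragraphs are sound: the splitting $\g=\mathfrak{s}\oplus C_{\g}(\mathfrak{s})$ with $\mathfrak{s}=\g_{\bar 1}\oplus[\g_{\bar 1},\g_{\bar 1}]$ (granting the structure of $\mathfrak{s}$), and the anisotropy $[x,x]\neq 0$ for $x\neq 0$, which is also the paper's first observation. But the core of your argument, paragraphs three and four, has genuine gaps, exactly where you yourself locate the difficulty. First, the $\osp(1|2)$-triples you produce via Theorem \ref{thrm:JM} live inside $\gl(V)$, not inside $\g$: nothing forces them to normalize $\g$, so their even parts do not act on $\g_{\bar 1}$ at all, and they cannot be used to conclude that $\g_{\bar 1}$ is a completely reducible $\mathfrak{s}_{\bar 0}$-module. (You also cannot apply Theorem \ref{thrm:JM} inside a group $G$ with $\mathrm{Lie}(G)=\g$, since the proposition makes no quasi-reductivity assumption --- the conclusion allows $\g'$ to be an arbitrary Lie algebra.) Second, nondegeneracy of the supertrace form $\omega$ does not follow from anisotropy: since every $x\in\g_{\bar 1}$ acts nilpotently, $[x,x]$ is a nilpotent operator on $V$, so $\omega(x,x)=\tfrac{1}{2}\operatorname{str}_V([x,x])=0$ identically, and anisotropy of $x\mapsto[x,x]$ gives no control on the radical of $\omega$. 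Third, even granting a decomposition of $\g_{\bar 1}$ into irreducible symplectic $\mathfrak{s}_{\bar 0}$-summands $W_i$, your Schur-lemma/moment-map step silently assumes that $\mathfrak{s}_{\bar 0}$ surjects onto $\mathfrak{sp}(W_i)$ (your ``standard symplectic modules''); a priori the image could be a proper reductive subalgebra (say a principal $\mathfrak{sl}_2$), in which case $S^2W_i$ is not $\mathfrak{s}_{\bar 0}$-irreducible and the bracket is not pinned down by Schur's lemma. Ruling such cases out is essentially the classification content you are trying to avoid.

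The paper's route is different and bypasses all of this: it argues by induction on $\dim\g_{\bar 0}+\dim\g_{\bar 1}$, picks a minimal nonzero ideal $\mathfrak{m}$, invokes Kac's classification of simple Lie superalgebras (a simple superalgebra in which $[x,x]\neq 0$ for every nonzero odd $x$ is $\osp(1|2m)$ or purely even), and, when $\mathfrak{m}$ is even, applies the inductive hypothesis to $\g/\mathfrak{m}$ and splits the resulting extension $0\to\mathfrak{n}\to\g\to\mathfrak{s}\to 0$ using the vanishing of the second cohomology of $\osp(1|2m_1)\oplus\dots\oplus\osp(1|2m_k)$ with arbitrary coefficients, noting that $\mathfrak{s}$ acts trivially on the purely even kernel $\mathfrak{n}$. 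If you want to salvage your direct approach, the two missing ingredients --- complete reducibility of the $\mathfrak{s}_{\bar 0}$-action on $\g_{\bar 1}$ and identification of the irreducible pieces as full symplectic modules --- would have to be supplied by precisely this kind of classification or cohomological input.
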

\begin{proof} We start with the following straightforward observations:
  \begin{enumerate}
  \item If $\g_{neat}=\g_{\bar 1}$ then $[x,x]\neq 0$ for any non-zero $x\in\g_{\bar 1}$.
    \item If $\g_{neat}=\g_{\bar 1}$ and $\mathfrak{h}$ is a quotient of $\g$, then $\mathfrak{h}_{neat}=\mathfrak{h}_{\bar 1}$.
    \end{enumerate}

    We are going to prove the statement by induction on $\dim\g_{\bar 0}+\dim\g_{\bar 1}$.
    We note that if $\g$ is simple then from Kac classification of simple superalgebras (1) holds only for $\g=\mathfrak{osp}(1|2m)$ or $\g_{\bar 1}=0$.

    Assume that $\g_{neat}=\g_{\bar 1}$.
    Let $\mathfrak m$ be some minimal non-zero ideal of $\g$.
    Then either $\mathfrak{m}$ is simple or abelian one-dimensional. Note also that by (2) in the latter case $\mathfrak m$ is even. If $\mathfrak{m}$ is simple then by above
    $\mathfrak m$ is either a Lie algebra or $\mathfrak{osp}(1|2m)$. First 
assume that $\mathfrak{m}\simeq \mathfrak{osp}(1|2m)$. 
\InnaC{Since $\mathfrak{osp}(1|2m)$ acts semisimply on its finite-dimensional 
modules, we have a splitting of $\mathfrak{m}$-modules $\g=\mathfrak{m}\oplus 
\mathfrak{l}$. This splitting is invariant under operators $ad(m), m\in 
\mathfrak{m}$, yet $\mathfrak{m}$ is an ideal; so $[m, t]=0$ 
for any $m\in \mathfrak{m}, t\in 
\mathfrak{l}$. Let $t_1,t_2 \in \mathfrak{l}$. Then for any $m \in 
\mathfrak{m}$, $[m,[t_1, t_2]]=0$ by Jacobi identity, and thus the 
projection of $[t_1, t_2]$ on $\mathfrak{m}$ sits in the center of 
$\mathfrak{m}$. Yet $\mathfrak{m}\simeq \mathfrak{osp}(1|2m)$ and it is 
center-less, so $[t_1, t_2] \in \mathfrak{l}$ and $\mathfrak{l}$ is an ideal 
in $\g$. Hence we have a decomposition of Lie superalgebras 
$\g=\mathfrak{m}\times \mathfrak{l}$} with $\dim\mathfrak{l}<\dim\g$ and the
    statement follows from the induction assumption.

    Now let us assume that $\mathfrak{m}$ is even. Consider $\mathfrak l:=\g/\mathfrak m$. Then by the induction assumption
    $$\mathfrak l\cong \mathfrak l'\oplus \mathfrak{osp}(1|2m_1)\oplus\dots\oplus\mathfrak{osp}(1|2m_k).$$ for some Lie algebra $\mathfrak l'$ and $m_1, \ldots, m_k \in \mathbb N$.
    Set $\mathfrak{s}:=\mathfrak{osp}(1|2m_1)\oplus\dots\oplus\mathfrak{osp}(1|2m_k)$ and $\mathfrak{n}:=\Ker(\g\to \mathfrak{s})$. Consider the exact sequence
    \begin{equation}\label{seq1}
      0\to\mathfrak n\to\g\to \mathfrak s\to 0.
      \end{equation}
Since the second cohomology of $\mathfrak s$ with coefficients in any module are zero, the above sequence splits. Furthermore, since $\mathfrak n$ is purely even the action of $\mathfrak s$ on $\mathfrak n$
is trivial. Therefore $\g\cong\mathfrak n\oplus\mathfrak s$ and the statement follows.
\end{proof}

\begin{proposition}\label{prop:neatgroup}[See also \cite{Wei}] Let $G$ be a 
connected algebraic supergroup with Lie superalgebra $\g$. Assume that 
\InnaC{every} $x\in\g_{\bar 1}$ acts neatly on \InnaC{every} representation of 
$G$.
  Then
    $$G\cong G'\times {OSp}(1|2m_1)\times\dots\times{OSp}(1|2m_k)$$
  for some $m_1,\dots,m_k\in\mathbb N$ and {an algebraic group $G'$}.
\end{proposition}
\begin{proof} The proof is similar to the proof of Proposition \ref{prop:neatclassification}. Consider a minimal connected non-trivial normal subgroup $M\subset G$. Since $\operatorname{Lie} M$ has no non-zero odd $x$
  such that $[x,x]=0$ we get that either $M\simeq OSp(1|2m)$ or $M$ is an algebraic group. In the former case we note that the decomposition $\g=\mathfrak{m}\oplus \mathfrak{l}$ can be lifted to
  $G=M\times L$ since $OSp(1|2m)$ does not have non-trivial discrete normal subgroups. In the latter case, we consider the sequence (\ref{seq1}) of Lie superalgebras and by the same reason it induces the decomposition
  for the groups $G=S\times N$ where $S={OSp}(1|2m_1)\times\dots\times{OSp}(1|2m_k)$ and $N$ is an algebraic group.
  \end{proof}

  \begin{corollary}\label{cor:neatgroup} Let $G$ be an algebraic supergroup with 
Lie superalgebra $\g$. Assume that \InnaC{every} $x\in\g_{\bar 1}$ acts neatly 
on \InnaC{every} representation of $G$. Then the (super)dimension of any 
indecomposable
    representation of $G$ is not zero.
  \end{corollary}
  \begin{proof} If $G$ is connected we use directly the proposition \ref{prop:neatgroup}. Any indecomposable representation of $G$ is of the form $M\otimes L_1\otimes\dots\otimes L_k$ where $L_i$ is an irreducible
    representation of $ {OSp}(1|2m_i)$ and $M$ is an indecomposable representation of $G'$; hence it has non-zero dimension. If $G$ is not connected, denote by $G_e$ the connected component of identity.
    {Consider any indecomposable representation $V$ of $G$ and its restriction $\Res^G_{G_e}(V)$ to $G_e$. Decomposing this into a direct sum of indecomposable finite-dimensional $G_e$- modules, we see that the finite group $G/G_e$ acts on the set of indecomposable direct summands. In other words, there exists an indecomposable $G_e$-module $V_e$ such that 
    $\Res_{G_e}(V)=\oplus_g V_e^{Ad_g}$, where $g$ runs over some subset of $G/G_e$, and $ V_e^{Ad_g}$ denotes the $G_e$-module $V_e$ with the action twisted by the automorphism $Ad_g$ of $G_e$. Therefore $\dim V$ is a multiple of $\dim V_e$ and hence not zero.}
     \end{proof}

     \begin{proposition}\label{prop:quasi_red_semisimple}
Let $G$ be quasi-reductive. Assume $\g_{neat}=\g_{\bar 1}$. Then $\Rep(G)$ is semisimple.  
\end{proposition}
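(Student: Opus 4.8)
The plan is to prove that the unit object $\triv$ is projective in $\Rep(G)$; once this is established, semisimplicity follows by a formal argument. First I would invoke Proposition \ref{prop:enough_proj}: since $G$ is quasi-reductive, $\Rep(G)$ has enough projectives, and because every object of $\Rep(G)$ is finite-dimensional (hence of finite length, so Krull--Schmidt applies) there exists a nonzero \emph{indecomposable} projective object $P$. The key input is that the hypothesis $\g_{neat}=\g_{\bar 1}$ means every $x\in\g_{\bar 1}$ is a neat nilpotent element, and therefore acts neatly on every representation of $\g$, in particular on every representation of $G$. This is exactly the hypothesis of Corollary \ref{cor:neatgroup}, so we conclude $\dim P\neq 0$.

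Next I would run the standard rigidity argument (the same one used inside the proof of Proposition \ref{prop:enough_proj}). Since $\dim P\in\fk^{\times}$, the composite $\triv \xrightarrow{\mathrm{coev}} P\otimes P^{*} \xrightarrow{\mathrm{ev}} \triv$ equals multiplication by $\dim P$, which is invertible; this exhibits $\triv$ as a direct summand of $P\otimes P^{*}$. Because $P$ is projective and $\Rep(G)$ is rigid with biexact tensor product, the functor $\Hom(P\otimes P^{*},-)\cong \Hom(P, -\otimes P)$ is a composite of the exact functor $-\otimes P$ with the exact functor $\Hom(P,-)$, hence exact; thus $P\otimes P^{*}$ is projective. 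A direct summand of a projective object is projective, so $\triv$ is projective.

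Finally, for an arbitrary $M\in\Rep(G)$ the isomorphism $M\cong \triv\otimes M$, together with the fact that tensoring a projective object with any object again yields a projective object (same exactness computation as above), shows that $M$ is projective. Hence every object of $\Rep(G)$ is projective, so every short exact sequence splits, and $\Rep(G)$ is semisimple.

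The only substantive ingredient is the non-vanishing $\dim P\neq 0$, which is precisely Corollary \ref{cor:neatgroup}; everything else is a routine consequence of rigidity and the existence of enough projectives. I therefore expect the only point requiring care to be the verification that the hypothesis $\g_{neat}=\g_{\bar 1}$ genuinely supplies the hypothesis of Corollary \ref{cor:neatgroup} (i.e.\ that neatness of every odd element as an element of $\g$ transfers to neatness on all $G$-representations), rather than any difficulty in the categorical steps.
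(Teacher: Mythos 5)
Your proof is correct and is essentially the paper's own argument: the paper likewise combines Proposition \ref{prop:enough_proj} (existence of a nonzero indecomposable projective $P$), Corollary \ref{cor:neatgroup} (so $\dim P\neq 0$), and the rigidity argument splitting $\triv$ off $P\otimes P^{*}$ to conclude $\triv$, and hence every object, is projective. The paper merely states this in two lines, while you spell out the routine categorical details.
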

\begin{remark}
 The condition $\g_{neat}=\g_{\bar 1}$ means that given a faithful 
representation $V$ of $G$, \InnaC{every} odd element $x \in \g_{\bar{1}}$ acts 
neatly. This is also equivalent to the condition that the image of {\it 
\InnaC{every}} tensor functor
  $R: \Rep(G) \to \Rep(\G^{(1|1)})$ lies in $\cU_{neat}$.
\end{remark}
\begin{proof} Follows from Corollary \ref{cor:neatgroup}. Indeed, if $P$ is projective indecomposable, then $\fk$ is a direct summand of $P\otimes P^*$ which implies that $\fk$ is projective.
\end{proof}

\section{The minuscule supergroup}\label{sec:minuscule}
\subsection{}
\VeraA{ Let $\mathfrak m$ be the $(1|1)$-dimensional Lie superalgebra with basis $\bar x, [\bar x,\bar x]$ 
such that $\bar x$ is odd. 
Consider the category $\Rep(\mathfrak m)$ of finite-dimensional $\mathfrak m$-modules. For any $\mathfrak m$-module $V$ we denote by $x$ the image of $\bar x$ in $\End_{\fk}(V)$.
\begin{lemma}\label{lem:m-indecomposable} Let $V\in \Rep(\mathfrak m)$ be indecomposable. Then either $x$ is nilpotent on $V$, or the map $x:V_{\bar 0}\to V_{\bar 1}$ is an isomorphism. 

In the former case, the action of $x$ on $V$ can be lifted to an indecomposable
  $\G^{(1|1)}$-module. 

  \InnaD{If $x$ is not nilpotent, then} there exist $\lambda\in\fk^*$ and bases
    $\{u_1,\dots,u_n\}$ of $V_{\bar 0}$
    and $\{v_1,\dots,v_n\}$ of $V_{\bar 1}$ such that
    $ x u_i=v_i$ and $xv_i=\InnaD{\lambda}u_i+u_{i-1}$ (we assume $u_0=v_0=0$).
  \end{lemma}
  \begin{proof} Let ${y}=[x,x]$ and ${y}={y}_s+{y}_n$ be the Chevalley-Jordan decomposition. Since $[y_s,\InnaD{y}]=[y_s,x]=0$ we obtain that $y_s$ acts on $V$ by some scalar $\lambda$. If $\lambda=0$ then $y_s=0$ and hence $x$ is nilpotent.
    If $\lambda\neq 0$ then $2x^2=y$ is an isomorphism. Hence $x:V_{\bar 0}\to V_{\bar 1}$ is an isomorphism. Therefore one can choose bases $\{u_1,\dots,u_n\}$ of $V_{\bar 0}$
    and $\{v_1,\dots,v_n\}$ of $V_{\bar 1}$ such that the matrix of $x$ in this basis has form $\left(\begin{matrix}0&C\\ 1_n&0\end{matrix}\right)$ for some $n\times n$-matrix $C$. Moreover, without loss of generality
  we may assume that $C$ is in Jordan normal form.
  Indecomposability of $V$ implies that $C$ has one Jordan block. This implies the last statement.
    \end{proof}

}   
By Lemma \ref{lem:splittannakian}, this category is equivalent to  $\Rep(\mathbb{M}) $
for some algebraic pro-supergroup $\mathbb M$. We call $\mathbb M$
the {\it  minuscule} supergroup.

\VeraA{\begin{lemma}
        The ring of functions on $\mathbb M$ is the supercommutative algebra
  $$\mathcal O(\mathbb M)\simeq \left(\fk[z_\lambda^{\pm 1}]_{\lambda\in \fk}/(z_\lambda^n-z_{n\lambda})_{\lambda\in\fk,n\in\mathbb Z}\right )\otimes\mathbb C[t,\omega],$$
    with $z_\lambda$ and $\InnaD{t}$ even and $\omega$ odd.
       \end{lemma}
 Note that $z_0=1$. We define the Hopf structure on $\mathcal O(\mathbb M)$ by coproduct
    $$\Delta(z_\lambda)=z_\lambda \otimes z_\lambda+\lambda(z_\lambda\omega\otimes z_\lambda\omega),\ \Delta(t)=t\otimes 1+1\otimes t+\omega\otimes\omega,\ \Delta(\omega)=\omega\otimes 1+1\otimes \omega,$$
    and antipode
    $$\sigma(z_\lambda)=z_\lambda^{-1},\ \sigma(t)=-t,\ \sigma(\omega)=-\omega.$$
    \begin{proof}
     Indeed, let $V$ be some finite-dimensional indecomposable $\mathfrak m$-module. If $\bar x$ acts nilpotently on $V$ then $V$ is an indecomposable representation of the quotient isomorphic to $\G^{(1|1)}$ where
    $\mathcal O(\G^{(1|1)})\subset\mathcal O(\mathbb M)$ is generated by $\omega$ and $t$. If $\bar x$ does not act nilpotently on $V$ we choose a basis as in Lemma \ref{lem:m-indecomposable} and define a right
    $\mathcal O(\mathbb M)$-comodule structure on $V$ by
    $$\rho(u_i)=u_i\otimes z_\lambda+u_{i-1}\otimes z_\lambda t+v_i\otimes z_\lambda\omega+v_{i-1}\otimes z_\lambda t \omega, $$
    $$\rho(v_i)=v_i\otimes z_\lambda +v_{i-1}\otimes z_\lambda t+\lambda u_i\otimes z_\lambda\omega+u_{i-1}\otimes z_\lambda(1+\lambda t)\omega.$$
  In other words, $V$ is a representation of the finite-dimensional quotient $G$ of $\mathbb M$. The corresponding Hopf subalgebra $\mathcal O(G)\subset\mathcal O(\mathbb M)$ is generated by $\omega,t,z_\lambda$.
    \end{proof}

    }
  
  \InnaC{
  \begin{lemma}\label{lem:M_connected}
   The supergroup $\mathbb M$ is connected.
  \end{lemma}
  \begin{proof}
  Let $V$ be an object in $Rep(\mathbb{M}) \cong Rep(\mathfrak{m})$. We denote by $\langle V\rangle$ the full subcategory of $Rep(\mathbb{M})$ generated by $V$ under taking finite direct sums and subquotients. 
  The statement that $\mathbb{M}$ is connected is equivalent to the statement that for every $V \in Rep(\mathfrak{m})$, if $\mathbb{M}$ (equivalently, $\mathfrak{m}$) does not act trivially on $V$ then $\langle V\rangle$ is not closed under $\otimes$ (see \cite[Corollary 2.22]{DM}).
  
  Indeed, let $V \in Rep(\mathfrak{m})$ on which $\mathfrak{m}$ does not act trivially. Consider the indecomposable direct summands of $V$. If $V$ has a direct summand $M$ on which $[\bar{x}, \bar{x}] \in \mathfrak{m}$ acts nilpotently but not trivially, then $ M^{\otimes s} \notin \langle V \rangle$ for $s>>0$ by Lemma \ref{lem:Clebsh_Gordan}, since $\langle V\rangle$ contains only finitely many non-isomorphic simple $\mathfrak{m}$-modules on which $[\bar{x}, \bar{x}]$ acts nilpotently. On the other hand, assume $V$ has a non-zero direct summand $M$ for which $[\bar{x}, \bar{x}] \rvert_M$ an isomorphism (see Lemma \ref{lem:m-indecomposable}). Let $\lambda \neq 0$ be the eigenvalue of $y_s$ (notation as in Lemma \ref{lem:m-indecomposable}) on $M$. Then $ M^{\otimes s}$, $s\in \Z_{\geq 0} $ will contain simple subquotients on which $[\bar{x}, \bar{x}]$ will have eigenvalues $s\lambda $, $s\in \Z_{\geq 0} $. So $ M^{\otimes s} \notin \langle V \rangle$ for $s>>0$, since $\langle V\rangle$ contains only finitely many non-isomorphic simple $\mathfrak{m}$-modules.
  \end{proof}}

\begin{lemma}\label{lem:minuscule} Let $G$ be an algebraic supergroup, $\g=\mathrm{Lie}(G)$ and $x\in\g_{\bar 1}$.
  There exists a unique homomorphism $i_x:\mathbb M\to G$ such that $Lie (i_x)(\bar x)=x$.
   \end{lemma}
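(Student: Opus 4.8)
The plan is to produce $i_x$ from its differential and then integrate, the essential ingredient being the Jordan decomposition of $[x,x]$. Set $z:=[x,x]\in\g_{\bar 0}=\Lie(G_{\bar 0})$. Since $G_{\bar 0}$ is an affine algebraic group, $z$ has an abstract Jordan decomposition $z=y_s+y_n$ with $y_s\in\g_{\bar 0}$ semisimple, $y_n\in\g_{\bar 0}$ nilpotent and $[y_s,y_n]=0$; moreover $y_s$ acts semisimply and $y_n$ nilpotently in every finite-dimensional representation of $G$. I would then propose the differential $di_x:\mathfrak m\to\g$ given by $\bar x\mapsto x$, $\bar y_s\mapsto y_s$, $\bar y_n\mapsto y_n$, and first check it is a homomorphism of Lie superalgebras.

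The main step is to verify the bracket relations, which reduce to $[x,y_s]=[x,y_n]=0$ (the relation $[y_s,y_n]=0$ holds by construction, and $[\bar x,\bar x]=\bar y_s+\bar y_n$ maps to $z=y_s+y_n$). By the Jacobi identity one has $[x,[x,x]]=0$, hence $\ad_z(x)=[z,x]=0$, so $x$ lies in the zero-eigenspace of $\ad_z$ acting on $\g$. Now $\Ad:G_{\bar 0}\to GL(\g)$ is an algebraic representation, so by functoriality of the Jordan decomposition $\ad_{y_s}=(\ad_z)_s$ and $\ad_{y_n}=(\ad_z)_n$ as operators on $\g$. Both the semisimple and nilpotent parts of $\ad_z$ act as $0$ on the zero-eigenspace; since $\ad_z(x)=0$ we conclude $\ad_{y_s}(x)=\ad_{y_n}(x)=0$, i.e. $[x,y_s]=[x,y_n]=0$. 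Thus $di_x$ is a homomorphism of Lie superalgebras. I expect this functoriality-of-Jordan-decomposition argument to be the main obstacle, the subtle point being to separate $y_s$ and $y_n$ (rather than only controlling their sum $z$).

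Next I would integrate $di_x$ to a homomorphism of (pro-)supergroups. By the Harish-Chandra pair description of supergroups (Theorem \ref{theorem_HC}) it suffices to integrate the even part $di_x|_{\mathfrak m_{\bar 0}}:\langle y_s,y_n\rangle\to\g_{\bar 0}$ to a homomorphism $\mathbb M_{\bar 0}\to G_{\bar 0}$ compatible with $di_x$ under the adjoint actions. Here $\mathbb M_{\bar 0}\cong\G\times\mathbb D$, with $\mathbb D$ the pro-diagonalizable group classifying the semisimple generator $\bar y_s$: the nilpotent $y_n$ gives a homomorphism $\G\to G_{\bar 0}$ via $\exp$, while the semisimple $y_s$ endows each $G_{\bar 0}$-representation with a functorial $\fk$-grading by its $y_s$-eigenvalues, i.e. a homomorphism $\mathbb D\to G_{\bar 0}$; since $[y_s,y_n]=0$ these combine. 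The required adjoint-compatibility follows from the same vanishing brackets: the image of $\mathbb M_{\bar 0}$ fixes $\langle x,y_s,y_n\rangle$ because $\ad_{y_s}$ and $\ad_{y_n}$ annihilate $x,y_s,y_n$. (Equivalently, one may argue Tannakianly: each $V\in\Rep(G)$ becomes an $\mathfrak m$-module, with $\bar y_n$ acting nilpotently and $\bar y_s$ semisimply, via $di_x$, using that $z|_V=y_s|_V+y_n|_V$ is the Jordan decomposition of the operator $z|_V$; additivity of Jordan decomposition on tensor products makes this a tensor functor $\Rep(G)\to\Rep(\mathbb M)$, hence $i_x$.)

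Finally, for uniqueness, suppose $i:\mathbb M\to G$ satisfies $\Lie(i)(\bar x)=x$. Then $\Lie(i)(\bar y_s+\bar y_n)=\Lie(i)([\bar x,\bar x])=[x,x]=z$. Since $i$ carries the diagonalizable subgroup $\mathbb D$ and the unipotent subgroup $\G$ of $\mathbb M_{\bar 0}$ into a diagonalizable and a unipotent subgroup of $G_{\bar 0}$ respectively, $\Lie(i)(\bar y_s)$ is semisimple, $\Lie(i)(\bar y_n)$ is nilpotent, and they commute; by uniqueness of the Jordan decomposition of $z$ they must equal $y_s,y_n$. Hence $\Lie(i)=di_x$ is forced, and as $\mathbb M$ is connected (both $\G$ and $\mathbb D$ are connected) and $\fk$ has characteristic zero, a homomorphism of (pro-)supergroups is determined by its differential, giving $i=i_x$.
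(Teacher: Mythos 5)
Your core computation is exactly the paper's: write $[x,x]=y_s+y_n$ (Jordan--Chevalley decomposition in $\g_{\bar 0}$) and check $[x,y_s]=[x,y_n]=0$. In fact your functoriality argument --- apply compatibility of the Jordan decomposition to $\Ad\colon G_{\bar 0}\to GL(\g)$ and use that $T_s,T_n$ are polynomials in $T$ without constant term, hence kill $\Ker T\ni x$ --- is a welcome expansion of the step the paper merely asserts (``Note that $y_s,y_n\in\g$ and commute with $x$''). Where you diverge is the integration step, and here the paper's entire proof is your parenthetical remark: the Lie superalgebra homomorphism $\mathfrak m\to\g$, $\bar x\mapsto x$, $\bar y_s\mapsto y_s$, $\bar y_n\mapsto y_n$, induces a restriction functor $\Rep(G)\to\Rep(\mathfrak m)$, which lands in $\Rep(\mathbb M)$ precisely because $y_s$ (resp.\ $y_n$) acts semisimply (resp.\ nilpotently) in \emph{every} $G$-module; Tannakian formalism then converts this faithful SM functor into $i_x$. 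Uniqueness also comes for free on this route: a representation of $\mathbb M$ \emph{is}, by definition, an $\mathfrak m$-module of this kind, and on any $i^*V$ the actions of $\bar y_s,\bar y_n$ are forced to be the Jordan components of the operator $[x,x]\rvert_V$, so the restriction functor, and hence $i$ itself, is pinned down by the single condition $\Lie(i)(\bar x)=x$.

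Your main line (Harish--Chandra pairs plus the explicit description $\mathbb M_{\bar 0}\cong\G\times\mathbb D$) buys concreteness, but carries caveats you should be aware of. First, the structure of $\mathbb M_{\bar 0}$ and the one-dimensionality of $\Lie(\mathbb M)_{\bar 1}$ are stated in the paper only in a remark \emph{after} Lemma \ref{lem:minuscule}, and are deduced \emph{from} it; to avoid circularity you would have to extract them directly from the definition of $\Rep(\mathbb M)$. Second, Theorem \ref{theorem_HC} is stated for algebraic supergroups, whereas $\mathbb M$ is only a pro-supergroup, so a limit version is needed. Third, your closing uniqueness step (``a homomorphism is determined by its differential'') is not immediate here: $\Lie(\mathbb M)$ is infinite-dimensional, and your Jordan argument pins $\Lie(i)$ down only on the three-dimensional subspace $\mathfrak m$; one still needs that $\mathfrak m$ generates $\mathbb M$ in the appropriate topological sense, which is again most cleanly seen through the Tannakian description of $\Rep(\mathbb M)$ as $\mathfrak m$-modules. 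In short: keep your bracket computation, and promote your parenthesis to the proof.
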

   \begin{proof} The homomorphism $\mathfrak m\to\g$ of Lie superalgebras induces the restriction functor $\Rep(G)\to \Rep(\mathfrak m)$. 
     This functor defines a faithful SM functor $R_x:\Rep(G)\to \Rep(\mathbb M)$. The statement follows by Tannakian formalism, \InnaC{together with Lemma \ref{lem:M_connected} and Remark \ref{rmk:conn_Tannakian}}.
  \end{proof}
  \begin{remark} 
  The above Lemma implies that although the Lie superalgebra $\mathrm{Lie}(\mathbb M)$ is infinite-dimensional, its odd part is one-dimensional. Futhermore, the ``even'' part $\mathbb M_{\bar 0}$ is a direct product of $\G$ and the abelian
    reductive pro-group: the maximal pro-torus of $\mathbb M_{\bar 0}$.
    \end{remark}

\begin{remark}  The group $\mathbb{M}$ admits the supergroup $\G^{(0|1)}$ as the quotient by $\mathbb{M}_{\bar 0}$,
  the supergroup $\G^{(1|1)}$ as the quotient by the maximal pro-torus of $\mathbb M_{\bar 0}$. Finally, the quotient of $\mathbb M$ by $\G\subset \mathbb{M}_{\bar 0}$ gives a  ``superextension'' of  the maximal pro-torus.
  Every connected quasireducive supergroup with abelian even part and $1$-dimensional odd part is a quotient of this superextension. {An example of such a supergroup is $Q(1)$.}
\end{remark}

\begin{lemma}\label{lem:M-semisimplification}
 The semisimplification of the category $\Rep(\mathbb{M})$ is isomorphic to $\Rep(OSp(1|2))$.
\end{lemma}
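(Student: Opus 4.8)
The plan is to reduce the statement to the tensor equivalence $S\circ S^*\colon\Rep(OSp(1|2))\xrightarrow{\sim}\overline{\cU}$ already established in Section~\ref{filtration}, by identifying the part of $\Rep(\mathbb M)$ that survives semisimplification with the image of $\cU=\Rep(\G^{(1|1)})$. First I would unwind the definition of $\mathbb M$ exactly as in the proof of Lemma~\ref{lem:minuscule}: an object of $\Rep(\mathbb M)$ is a finite-dimensional superspace $V$ equipped with a single odd endomorphism $X=\rho(\bar x)$. Indeed the even operators $Y_s=\rho(\bar y_s)$ and $Y_n=\rho(\bar y_n)$ satisfy $Y_s+Y_n=2X^2$, $[Y_s,Y_n]=0$, with $Y_s$ semisimple and $Y_n$ nilpotent, so by uniqueness of the Jordan--Chevalley decomposition they are the semisimple and nilpotent parts of $2X^2$; being polynomials in $X^2$ they commute with $X$ and are recovered from it. For the same reason an even map commutes with $X$ if and only if it commutes with all of $X,Y_s,Y_n$, so morphisms in $\Rep(\mathbb M)$ are precisely the even $X$-equivariant maps.

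Next I would decompose by generalized eigenspaces of $X^2$. Since $X$ commutes with $X^2$ it preserves each generalized eigenspace $V^{(\lambda)}$, and so do $Y_s,Y_n$ (polynomials in $X^2$); hence $V=\bigoplus_\lambda V^{(\lambda)}$ is a decomposition into $\mathbb M$-submodules, and every indecomposable has $X^2$ with a single generalized eigenvalue $\lambda$. If $\lambda\neq 0$ then $X^2$ is invertible on $V$, so $X$ is invertible and restricts to isomorphisms $V_{\bar 0}\xrightarrow{\sim}V_{\bar 1}$ and $V_{\bar 1}\xrightarrow{\sim}V_{\bar 0}$, whence $\dim V=\dim V_{\bar 0}-\dim V_{\bar 1}=0$. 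If $\lambda=0$ then $X$ is nilpotent, $Y_s=0$, and $(V,X)$ is exactly a $\G^{(1|1)}$-module, so $V$ is a direct sum of the modules $M_k$. In this way the full subcategory of $\Rep(\mathbb M)$ of modules with $X$ nilpotent is canonically identified with $\cU$; it is closed under tensor products (the tensor product of nilpotent-$X$ modules is again a $\G^{(1|1)}$-module), so this identification is an identification of SM subcategories and $M_k\mapsto M_k$.

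Finally I would read off the semisimplification. Since $\dim M_{2k}=1\neq 0$ while $\dim M_{2k+1}=0$, the indecomposables of nonzero categorical dimension are exactly the $M_{2k}$ (up to parity shift), and by the previous paragraph these all lie in the SM subcategory $\cU\subset\Rep(\mathbb M)$, whereas every indecomposable outside it has dimension $0$. The inclusion $\iota\colon\cU\hookrightarrow\Rep(\mathbb M)$ is a fully faithful SM functor, hence preserves traces, so negligible morphisms between objects of $\cU$ agree whether computed in $\cU$ or in $\Rep(\mathbb M)$; therefore $\iota$ descends to a fully faithful SM functor $\overline\iota\colon\overline{\cU}\to\overline{\Rep(\mathbb M)}$. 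The dimension count shows $\overline\iota$ reaches every simple object of $\overline{\Rep(\mathbb M)}$, so it is essentially surjective and thus a tensor equivalence. Composing with $S\circ S^*\colon\Rep(OSp(1|2))\xrightarrow{\sim}\overline{\cU}$ yields $\overline{\Rep(\mathbb M)}\simeq\Rep(OSp(1|2))$.

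The main obstacle is the middle step: checking that every indecomposable on which $X^2$ is \emph{not} nilpotent has vanishing superdimension, which is what isolates the $M_{2k}$ as the only survivors. The key observation making this routine is that indecomposability forces a single generalized eigenvalue of $X^2$, after which invertibility of $X$ (for $\lambda\neq 0$) gives the dimension $0$ immediately; once this is in place, the fusion rules and the final equivalence are inherited for free from the $\G^{(1|1)}$-computation already carried out in Section~\ref{filtration}.
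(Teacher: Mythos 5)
Your proposal is correct, and it follows the same overall strategy as the paper: embed $\cU$ into $\Rep(\mathbb{M})$ along the quotient $\mathbb{M}\twoheadrightarrow\G^{(1|1)}$ (your $\iota$ is the paper's functor $\mathbf{R}$), prove that the only indecomposables of $\Rep(\mathbb{M})$ surviving semisimplification are those coming from $\cU$, and then invoke the already-established equivalence $\Rep(OSp(1|2))\simeq\overline{\cU}$. Two steps, however, are carried out by genuinely different means. For the crucial vanishing statement, the paper lets the semisimple generator $y_s$ act by a scalar $\lambda$ on an indecomposable $M$ and, when $\lambda\neq 0$, kills $\dim M$ by a categorical-trace computation ($\lambda\dim M=tr(y_s\rvert_M)=tr([x,x]\rvert_M)=0$, using that the nilpotent $y_n$ is traceless and that $[x,x]$ has vanishing supertrace); you instead use uniqueness of the Jordan--Chevalley decomposition to identify $\Rep(\mathbb{M})$ outright with the category of superspaces carrying an arbitrary odd endomorphism $X$, observe that an indecomposable is concentrated on a single generalized eigenvalue $\lambda$ of $X^2$, and for $\lambda\neq 0$ conclude by pure linear algebra: $X$ is then invertible and odd, forcing $\dim V_{\bar 0}=\dim V_{\bar 1}$. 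Your version is more elementary and makes the structure of $\Rep(\mathbb{M})$ completely explicit, while the paper's is shorter given the categorical machinery it already has in place. Likewise, you descend $\iota$ to the semisimplifications by matching negligible morphisms across a fully faithful, trace-preserving SM functor, whereas the paper factors the full functor $S'\circ\mathbf{R}$ through $S$ via the universal property of semisimplification and then checks that the resulting $\mathbf{R}'$ is exact and faithful; these two devices are interchangeable here. The one input you should cite explicitly (as the paper does) is the standard fact your ``dimension count'' rests on: every simple object of $\overline{\Rep(\mathbb{M})}$ is the image of an indecomposable object of $\Rep(\mathbb{M})$ of nonzero dimension, which applies because $\Rep(\mathbb{M})$ is a tensor (pre-Tannakian) category, so nilpotent endomorphisms have trace zero.
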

\begin{proof}
We have a fully faithful SM $\fk$-linear functor $$\mathbf{R}: \cU \to \Rep(\mathbb{M})$$ corresponding to the quotient map $\mathbb{M} \to \G^{(1|1)}$. Let $S': \Rep(\mathbb{M}) \to \overline{\Rep(\mathbb{M})}$ denote the semisimplification functor of the category $\Rep(\mathbb{M})$. Then $S'\circ \mathbf{R}$ is a full SM $\fk$-linear functor from $\cU$ to the semisimple SM category $\overline{\Rep(\mathbb{M})}$. Such a functor necessarily factors through the semisimplification $$S: \cU \to \Rep(OSp(1|2))$$ and we obtain a full SM $\fk$-linear functor $\mathbf{R}': \Rep(OSp(1|2)) \to \overline{\Rep(\mathbb{M})}$ making the diagram below commutative 
$$\xymatrix{&{\cU} \ar[r]^-S \ar[d]_{\mathbf{R}} &{\Rep(OSp(1|2))} \ar[d]^{\mathbf{R}'} \\
&{\Rep(\mathbb{M})} \ar[r]^-{S'} &{\overline{\Rep(\mathbb{M})}} }$$
Now, $\Rep(OSp(1|2))$ is semisimple, so $\mathbf{R}'$ is automatically exact and hence faithful. It remains to check that it is essentially surjective, and then we can conclude that it is an equivalence. For this, it is enough to check that any simple object $\bar{M} \in \overline{\Rep(\mathbb{M})}$ lies in the essential image of $\mathbf{R}'$.

Indeed, recall that for any such $\bar{M}$ there exists an indecomposable object $M \in \Rep(\mathbb{M})$ such that $\bar{M} \cong S'(M)$ and $\dim M \neq 0$.
\VeraA{By Lemma \ref{lem:m-indecomposable} if $\dim M\neq 0$ then $\bar{x}\rvert_M$ is nilpotent.}

Then the action homomorphism $\mathbb{M} \to GL(M)$ factors through the quotient map $\mathbb{M} \to \G^{(1|1)}$, and hence $M \cong \mathbf{R}(\widetilde{M})$ for some $\widetilde{M} \in \cU$. This implies that $\bar{M} \cong S'\circ \mathbf{R}(\widetilde{M}) \cong \mathbf{R}' \circ S (\widetilde{M}) $, and so $\bar{M}$ lies in the essential image of $\mathbf{R}'$.

\end{proof}
As a corollary of the proof above, we have the following statement:
\begin{corollary}\label{cor:minuscule_neat}
 The full subcategory of $\Rep(\mathbb{M})$ of objects whose indecomposable summands have non-zero dimension is precisely $\cU_{neat}$, embedded in $\Rep(\mathbb{M})$ via the functor $\mathbf{R}$.
\end{corollary}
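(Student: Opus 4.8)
The plan is to read the statement off directly from the dichotomy already established inside the proof of Lemma~\ref{lem:M-semisimplification}. Write $\mathcal{C} \subset \Rep(\mathbb{M})$ for the full subcategory of objects all of whose indecomposable summands have non-zero dimension; I must identify $\mathcal{C}$ with the essential image of $\cU_{neat}$ under $\mathbf{R}$. Since both subcategories are closed under passing to direct summands, it suffices to match up their \emph{indecomposable} objects, and the whole argument reduces to understanding which indecomposable $\mathbb{M}$-modules have non-zero dimension.

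For the inclusion $\mathcal{C} \subseteq \mathbf{R}(\cU_{neat})$, I would take an indecomposable $M \in \mathcal{C}$, so that $\dim M \neq 0$, and rerun the computation from the lemma. As $M$ is indecomposable and $y_s$ acts semisimply, $y_s$ acts by a scalar $\lambda\,\id_M$, and the supertrace identity
$$\lambda \dim M = tr(y_s\rvert_M) = tr([x,x]\rvert_M) = 0$$
forces $\lambda = 0$ because $\dim M \neq 0$. Hence the $\mathbb{M}$-action on $M$ factors through the quotient $\mathbb{M} \to \G^{(1|1)}$, giving $M \cong \mathbf{R}(\widetilde M)$ for some $\widetilde M \in \cU$. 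Because $\mathbf{R}$ is fully faithful it reflects indecomposability, so $\widetilde M$ is indecomposable; because $\mathbf{R}$ is symmetric monoidal it preserves dimensions, so $\dim \widetilde M = \dim M \neq 0$. The only indecomposables of $\cU$ with non-zero dimension are the $M_{2k}$ (up to parity), so $\widetilde M \in \cU_{neat}$, as desired.

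For the reverse inclusion I would simply note that $\mathbf{R}$ is fully faithful and symmetric monoidal, hence sends each indecomposable $M_{2k}$ to an indecomposable object $\mathbf{R}(M_{2k})$ of the same non-zero dimension; thus $\mathbf{R}(\cU_{neat}) \subseteq \mathcal{C}$. Combining the two inclusions identifies $\mathcal{C}$ with $\cU_{neat}$ embedded via $\mathbf{R}$. The only point that carries any content — and it is precisely the forward implication ``$\dim M \neq 0 \Rightarrow y_s$ acts trivially'' already handled in Lemma~\ref{lem:M-semisimplification} — is the vanishing of $\lambda$; everything else is formal, using only that $\mathbf{R}$ reflects indecomposability (full faithfulness) and preserves dimensions (monoidality). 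So the corollary is really a repackaging of the lemma and requires no genuinely new input.
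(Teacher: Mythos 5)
Your proof is correct and follows essentially the same route as the paper: the paper states this corollary as an immediate consequence of the proof of Lemma \ref{lem:M-semisimplification}, namely the dichotomy that for an indecomposable $M \in \Rep(\mathbb{M})$ the scalar $\lambda$ by which $y_s$ acts satisfies $\lambda \dim M = 0$, so $\dim M \neq 0$ forces $\lambda = 0$ and hence a factorization through $\mathbb{M} \to \G^{(1|1)}$. Your additional bookkeeping (that $\mathbf{R}$, being fully faithful and symmetric monoidal, preserves and reflects indecomposability and preserves dimension) is exactly the formal content the paper leaves implicit.
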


\section{General setting: tensor functors for odd elements}\label{sec:tensor_func}
\subsection{Definition}\label{ssec:gen_def_phi}
We now consider the most general setting.
Let $G$ be an algebraic supergroup with Lie \InnaC{super}algebra $\g$. Let 
$x\in \g_{\bar{1}}$.

Recall a homomorphism 
$i_x: \mathbb{M} \rightarrow G$ defined in Lemma \ref{lem:minuscule}.

Let $R_x: \Rep(G) \to \Rep(\mathbb{M})$ be the restriction functor with respect to $i_x$. Composing $R_x$ with the semisimplification functor $$S: \Rep(\mathbb{M}) \to \Rep(OSp(1|2))$$ we obtain a SM $\fk$-linear functor
$$\Phi_x:=S\circ R_x : \Rep(G) \longrightarrow \Rep(OSp(1|2)).$$ 

Let $\tilde{\g} := \Phi_x(\g)$. This is an $OSp(1|2)$-Lie algebra object.

The functor $\Phi_x$ is not necessarily exact on either side, but defines a SM $\fk$-linear functor 
$$\widetilde{\Phi_x}: \Rep(G) \longrightarrow \Rep_{OSp(1|2)}(\tilde{\g})$$ where the latter is the category of $OSp(1|2)$-equivariant representations of $\tilde{\g}$.

\begin{remark} It is not hard to see that if $\g$ is one of the classical superalgebras $\mathfrak{gl}(V),\ \mathfrak{osp}(V),\ \mathfrak{q}(V)$ or $\mathfrak{p}(V)$ then $\tilde \g$ is a classical superalgebra of the same type.
 Also in all examples we know if $\g$ is quasi-reductive then $\tilde \g$ is quasi-reductive but we do not know if it is true in general.
  \end{remark}
\subsection{Special case: nilpotent odd operator}\label{ssec:phi_nilp}
If $x$ is nilpotent, then $[x,x]$ is a nilpotent even element, and so the 
\InnaC{homomorphism} $\mathbb{M} \rightarrow G$ factors through the 
homomorphism $ \mathbb{M} \twoheadrightarrow \G^{(1|1)}$.
In that case, we will have a restriction functor $R:\Rep(G) \to \Rep(\G^{(1|1)})$ as in Section \ref{sec:JM}. Composing with the semisimplification functor $S:\Rep(\G^{(1|1)})\rightarrow \Rep(OSp(1|2))$, we obtain a $\fk$-linear SM  functor
$$\Phi_x:=S\circ R : \Rep(G) \longrightarrow \Rep(OSp(1|2)).$$
When $x$ is neat, this is precisely the functor considered in Theorem \ref{thrm:JM}, the restriction functor with respect to some embedding $OSp(1|2) \to G$.
\begin{remark}
 Let $x_1, x_2 \in \g_{neat}$. By Tannakian formalism, $\Phi_{x_1} \cong \Phi_{x_2}$ iff $x_1$, $x_2$ are conjugate with respect to $G_{\bar{0}}$.
\end{remark}

\subsection{Back to general case}
Let $x\in\g_{\bar 1}$ and $[x,x]=y_s+y_n$. Then $x$ acts nilpotently on $M^{y_s}$. Consider the Deligne filtration
$$\ldots\subset\mathcal F^{i}(M^{y_s})\subset \mathcal F^{i+1}(M^{y_s})\subset\ldots $$
as defined in Section \ref{filtration}. 
\begin{lemma}\label{lem:generalfunctor} $\Phi_x(M)\cong  T(M^{y_s})$ where $T=Gr^{ev}$ as defined in Lemma \ref{lem:iso}. 
\end{lemma}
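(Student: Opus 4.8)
The plan is to compute $\Phi_x(M) = S(R_x(M))$ by slicing $M$, viewed as an $\mathbb M$-module via $i_x$, along the eigenvalues of the central semisimple element $y_s \in \mathrm{Lie}(\mathbb M)$, showing that the semisimplification $S$ kills every nonzero-eigenvalue slice, and then identifying $S$ on the zero-eigenvalue slice with the graded functor $T$ via the machinery already set up for the minuscule supergroup.

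First I would use that $\bar y_s$ is central in $\mathfrak m = \mathrm{Lie}(\mathbb M)$ (indeed $[\bar x,\bar y_s]=[\bar y_n,\bar y_s]=0$) and acts semisimply on every object of $\Rep(\mathbb M)$. Hence the $y_s$-eigenspace decomposition of $R_x(M)$ is a decomposition of $\mathbb M$-modules
\[ R_x(M) \cong \bigoplus_{\lambda \in \fk} M_\lambda, \qquad M_0 = M^{y_s}, \]
where $M_\lambda$ is the $\lambda$-eigenspace of $y_s$ on $M$, and this is functorial in $M$ since morphisms commute with $y_s$. Next I would show $S(M_\lambda)=0$ for $\lambda\neq 0$, which is exactly the trace computation from the proof of Lemma \ref{lem:M-semisimplification}: an indecomposable summand $N$ of $M_\lambda$ satisfies $y_s\rvert_N=\lambda\,\id_N$, and using that $y_n$ is nilpotent and that the categorical trace of the supercommutator $[x,x]$ vanishes, one gets $\lambda\dim N = tr(y_s\rvert_N) = tr([x,x]\rvert_N) = 0$, forcing $\dim N=0$; an indecomposable object of categorical dimension zero is negligible, so $S(N)=0$. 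Therefore $\Phi_x(M) = S(R_x(M)) \cong S(M^{y_s})$.

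It then remains to identify $S(M^{y_s})$ with $T(M^{y_s})$. On $M_0 = M^{y_s}$ the element $y_s$ acts by zero, so the $\mathbb M$-action factors through the quotient $\mathbb M \twoheadrightarrow \G^{(1|1)}$; concretely $M^{y_s}\cong \mathbf R(\widetilde M)$ for the $\G^{(1|1)}$-module $\widetilde M$ on which $x$ acts nilpotently and $[x,x]$ acts as $y_n$. By the commutative square of Lemma \ref{lem:M-semisimplification} (equivalently Corollary \ref{cor:minuscule_neat}), the restriction of $S$ to the essential image of $\mathbf R$ agrees, under the equivalence $\overline{\Rep(\mathbb M)} \cong \Rep(OSp(1|2))$, with the semisimplification functor $\cU \to \Rep(OSp(1|2))$. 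Finally, Lemma \ref{lem:iso} identifies that functor with $T = Gr^{ev}$, whose value on $\widetilde M$ is assembled from the Deligne filtration of the nilpotent action of $x$ on $M^{y_s}$. Chaining these isomorphisms yields $\Phi_x(M) \cong S(M^{y_s}) \cong T(M^{y_s})$.

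The main obstacle is the bookkeeping in the last step: one must check that all the appearances of ``the semisimplification functor landing in $\Rep(OSp(1|2))$'' are taken under one and the same equivalence $\overline{\Rep(\mathbb M)} \cong \Rep(OSp(1|2))$, so that the resulting composite isomorphism is canonical and natural in $M$. The only genuinely substantive input — that the nonzero $y_s$-eigenspaces are negligible — is already furnished by Lemma \ref{lem:M-semisimplification}, while everything else follows from the functoriality of the eigenspace decomposition and of the Deligne filtration.
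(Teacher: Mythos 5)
Your proof is correct and matches the paper's own argument: the paper likewise decomposes $M$ into $y_s$-eigenspaces (each being $\mathbb{M}$-invariant since $y_s$ is central and semisimple), invokes the trace/superdimension-zero argument from the proof of Lemma \ref{lem:M-semisimplification} to discard the nonzero-eigenvalue summands, and concludes via Lemma \ref{lem:iso}. Your version merely spells out the bookkeeping that the paper leaves implicit, namely the functoriality of the eigenspace decomposition and the fact that all identifications are made under the single fixed equivalence $\overline{\Rep(\mathbb{M})}\cong\Rep(OSp(1|2))$.
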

\begin{proof} Note that $y_s$ acts semisimply on $M$ and any eigenspace of $y_s$ in $\mathbb M$-invariant. Every indecomposable $\mathbb M$-submodule which lies in the eigenspace with non-zero eigenvalue
  has superdimension zero as explained in the proof of Lemma \ref{lem:M-semisimplification}. Hence the statement follows from Lemma \ref{lem:iso}. 
\end{proof}

\subsection{Relation to the DS functor}\label{ssec:DS}

We now describe a special case of the construction above. 

Let $\G^{(0|1)}$ be the purely odd affine additive group, with $\mathrm{Lie}(\G^{(0|1)}) = \fk^{0|1}$ (a purely odd vector superspace with trivial bracket).

A representation of $\G^{(0|1)}$ is defined by a pair $(x, M)$ where $x: M \to \Pi M$ is an odd endormophism of $M$, such that $[x,x]=0$. 

Consider the group homomorphism $ \G^{(1|1)} \twoheadrightarrow \G^{(0|1)}$ and the corresponding (faithful, exact, $\fk$-linear SM) embedding

$$I:\Rep(\G^{(0|1)}) \longrightarrow \cU=\Rep(\G^{(1|1)}).$$

{For any $M \in \Rep(\G^{(0|1)})$}, the indecomposable summands of $I(M)$ in $\cU$ then have (categorical) dimensions either $\pm 1$ (the torsion part of $M$ seen as a module over the algebra of dual numbers $k[x]/x^2$) or $0$ (free part of $M$ over $k[x]/x^2$). These first summands are not annihilated by the semisimplification functor $S:\Rep(\G^{(1|1)})\rightarrow \Rep(OSp(1|2))$, and are sent to representations with trivial $OSp(1|2)$-action; the summands of second type are annihilated by the semisimplification functor $S$. Hence $$S \circ I: \Rep(\G^{(0|1)}) \longrightarrow \Rep(OSp(1|2))$$ is in fact given by a $\fk$-linear SM functor 
$$D:\Rep(\G^{(0|1)}) \longrightarrow \sVect$$ which sends $M$ to its homology $\Ker(x\rvert_M) / xM$.

Given an embedding $\G^{(0|1)} \hookrightarrow G$ into an algebraic supergroup $G$, consider $\g: =\mathrm{Lie}(G)$ as a Lie algebra object in $\Rep(G)$, and let $\tilde{\g}:=D(\g)$. 
The functor $D$ induces a $\fk$-linear SM functor $$\Rep(G) \longrightarrow \Rep(\tilde{\g})$$ which is precisely the Duflo-Serganova functor. Hence $\widetilde{\Phi_x}$ is a generalization of the Duflo-Serganova functor.

\begin{remark}
 In general the functor $\Phi_x$ does not satisfy the Hinich property (``exact in the middle''), satisfied by the Duflo-Serganova functors (see \cite[Lemma 30]{HPS}). For example, for $G=\G^{(1|1)}$ and $x \in \mathrm{Lie}(G)_{\bar 1} \setminus\{ 0\}$, the functor $\Phi_x$ is just the semisimplification functor $S$. Consider the short exact sequence of $\G^{(1|1)}$-modules:
 $$0 \to M_1 \to M_2 \to M_0 \to 0$$
 Then $S(M_1)=0$, $S(M_2) = \widetilde{M_2}$, $S(M_0) = \widetilde{M_0}$. This implies that the complex
 $$0 \to S(M_1) \to S(M_2) \to S(M_0) \to 0$$
 is not exact at the middle.
\end{remark}

\subsection{Support of a module}

Let $G$ be an algebraic supergroup, and $\g = \mathrm{Lie}(G)$.

\begin{definition}
 Let $M \in \Rep(G)$. We define the {\it support of $M$} as $$supp(M) := \{x \in \g_{\bar{1}} \rvert \; \Phi_x(M) \neq 0\}.$$
\end{definition}

\begin{remark} Note that by definition $supp(0)=\emptyset$ and $0 \in supp(M)$ for all non-zero $M \in \Rep(G)$.        
       \end{remark}

\begin{lemma}\label{lem:supp_obvious}
 Let $M, N \in \Rep(G)$. 
 \begin{enumerate}
  \item $supp(M \oplus N) = supp(M) \cup supp(N)$.
  \item $supp(M \otimes N) = supp(M) \cap supp(N)$.
 \end{enumerate}

\end{lemma}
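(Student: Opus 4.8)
The plan is to reduce both identities to the corresponding (non)vanishing statements about objects in the target category $\Rep(OSp(1|2))$, exploiting the two structural properties of the functor $\Phi_x = S \circ R_x$ recorded in Section \ref{ssec:gen_def_phi}: it is $\fk$-linear (hence additive) and symmetric monoidal. Concretely, for any $M, N \in \Rep(G)$ I will use the natural isomorphisms $\Phi_x(M \oplus N) \cong \Phi_x(M) \oplus \Phi_x(N)$ and $\Phi_x(M \otimes N) \cong \Phi_x(M) \otimes \Phi_x(N)$, so that membership of a fixed $x \in \g_{\bar 1}$ in a support becomes a question about objects of $\Rep(OSp(1|2))$.

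For part (1), fix $x \in \g_{\bar 1}$. By additivity $\Phi_x(M \oplus N) \cong \Phi_x(M) \oplus \Phi_x(N)$, and a direct sum of objects in an additive category is zero if and only if both summands are zero. Hence $\Phi_x(M \oplus N) \neq 0$ if and only if $\Phi_x(M) \neq 0$ or $\Phi_x(N) \neq 0$, which is precisely the condition $x \in supp(M) \cup supp(N)$. Chaining these equivalences yields the claimed equality of sets.

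For part (2), again fix $x$ and invoke monoidality: $\Phi_x(M \otimes N) \cong \Phi_x(M) \otimes \Phi_x(N)$. The crux is the observation -- already used in the proof of Theorem \ref{thrm:JM} -- that $\Rep(OSp(1|2))$ is a tensor category in which, for any nonzero object $X$, the endofunctor $X \otimes -$ is faithful. Consequently, for objects $A, B$ of $\Rep(OSp(1|2))$ one has $A \otimes B = 0$ if and only if $A = 0$ or $B = 0$: if both are nonzero, faithfulness of $A \otimes -$ forces $A \otimes B \neq 0$ (equivalently, every simple $\widetilde{M}_{2k}$ has superdimension $1$, so $\dim$ is multiplicative and nonvanishing on nonzero objects). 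Applying this with $A = \Phi_x(M)$ and $B = \Phi_x(N)$ gives $\Phi_x(M \otimes N) \neq 0$ if and only if $\Phi_x(M) \neq 0$ and $\Phi_x(N) \neq 0$, i.e. $x \in supp(M) \cap supp(N)$.

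The only non-formal ingredient -- the one I would flag as the main obstacle -- is this zero-divisor-free property of the tensor product in the target: unlike in an arbitrary symmetric monoidal category, it holds here because $\Rep(OSp(1|2))$ is a semisimple tensor category all of whose simple objects have nonzero categorical dimension. Everything else is a routine manipulation of the additive and monoidal structure of $\Phi_x$.
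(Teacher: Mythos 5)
Your proof is correct and takes essentially the same route as the paper, whose proof is the one-line observation that both identities follow from additivity and monoidality of $\Phi_x$; the zero-divisor-free property you flag as the crux is exactly the fact the paper itself invokes in the proof of Theorem \ref{thrm:JM}, namely that in the tensor category $\Rep(OSp(1|2))$ the endofunctor $X \otimes -$ is faithful whenever $X \neq 0$. One caveat: your parenthetical alternative justification is not valid as stated, because the simple objects also occur with parity shifts $\Pi \widetilde{M}_{2k}$ of superdimension $-1$, so a nonzero object of $\Rep(OSp(1|2))$ (e.g.\ $\widetilde{M}_{0} \oplus \Pi \widetilde{M}_{0}$) can have superdimension $0$ --- so you should rely on the faithfulness argument, which is the correct one.
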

\begin{proof}
The statements follow from the fact that the functor $\Phi_x$ is additive and monoidal.
\end{proof}

\begin{remark}
{The support of a module is not necessarily open nor closed. This can be seen from Proposition \ref{prop:proj_support} and the Example \ref{ex:neat_gl_1_2}, which shows that for $G=GL(1|2)$ the support of any projective module is $\g_{neat}$ and it is neither open nor closed.}
\end{remark}

Below we give some results about the relation between neat elements, support, and projective modules.

\begin{proposition}\label{prop:intersect_support}
$$\bigcap_{M \in \Rep(G), M\neq 0} supp(M) = \g_{neat}$$

\end{proposition}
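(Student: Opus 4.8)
The statement to prove is
\[
\bigcap_{M \in \Rep(G),\, M\neq 0} supp(M) = \g_{neat}.
\]

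\textbf{Overall approach.} The plan is to prove the two inclusions separately. For the inclusion $\g_{neat} \subseteq \bigcap_M supp(M)$, I would show directly that a neat element $x$ satisfies $\Phi_x(M) \neq 0$ for every nonzero $M$. For the reverse inclusion, I would show that if $x$ is \emph{not} neat, then there exists at least one nonzero module $M$ with $\Phi_x(M) = 0$, so $x$ falls out of the intersection.

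\textbf{The inclusion $\g_{neat} \subseteq \bigcap_M supp(M)$.} Let $x \in \g_{neat}$ and let $M \in \Rep(G)$ be nonzero. Since $x$ is neat, the restriction $R(M) \in \cU$ decomposes as a direct sum of indecomposables $M_{2k}$, each of nonzero categorical dimension. As $M \neq 0$, at least one such summand $M_{2k}$ appears. By the explicit description of the semisimplification (the Lemma stating $S \circ S^*$ is a tensor equivalence, together with $S(M_{2k}) \neq 0$ for even index), the semisimplification functor $S$ does not annihilate any object of $\cU_{neat}$. Hence
\[
\Phi_x(M) = S(R(M)) \neq 0,
\]
so $x \in supp(M)$. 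Since $M$ was arbitrary, $x \in \bigcap_M supp(M)$.

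\textbf{The reverse inclusion.} Suppose $x \notin \g_{neat}$. I would produce a single nonzero module $M$ with $\Phi_x(M) = 0$. The natural candidate is a suitable indecomposable (or a carefully chosen submodule/subquotient arising from the $\mathbb{M}$-action) of superdimension zero. Concretely, using Lemma \ref{lem:generalfunctor}, one has $\Phi_x(M) \cong T(M^{y_s})$, which is built from the even-graded pieces of the Deligne filtration on the $y_s$-invariants. The idea is that non-neatness of $x$ means that $x$ (equivalently, the $\G^{(1|1)}$- or $\mathbb{M}$-action it generates) produces an indecomposable summand of superdimension $0$ on some faithful module; this summand, realized as an appropriate object of $\Rep(G)$, is annihilated by $S$. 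The key point is to guarantee the annihilating module is an honest $G$-module, not merely a $\G^{(1|1)}$-module: here one uses that $G$ is quasi-reductive, so $\Rep(G)$ has enough projectives (Proposition \ref{prop:enough_proj}), and that projective $G$-modules play the role of the neat-detecting objects. I would argue that if $x$ lies in the support of \emph{every} module, in particular of every projective, then $R(P) \in \cU_{neat}$ for all projectives $P$, which by the neatness criterion forces $x \in \g_{neat}$, a contradiction.

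\textbf{Main obstacle.} The genuinely delicate step is the reverse inclusion: constructing, from the mere failure of neatness, an explicit nonzero $G$-module killed by $\Phi_x$. The subtlety is that non-neatness is witnessed on some representation, but one must promote this into an actual object of $\Rep(G)$ annihilated by the semisimplification, while controlling the $y_s$-semisimple part via Lemma \ref{lem:generalfunctor}. I expect the cleanest route is to phrase this contrapositively through projectives and the neatness criterion (Lemma \ref{lem:neat_criterion}), reducing the problem to the statement that $x$ is neat if and only if it acts neatly on a faithful representation, and then tensoring with projectives to detect support. The forward inclusion, by contrast, is essentially immediate from the faithfulness of $S$ on $\cU_{neat}$.
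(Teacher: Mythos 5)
Your forward inclusion ($\g_{neat} \subseteq \bigcap_M supp(M)$) is correct and is exactly the paper's argument: a nonzero $M$ restricts to a nonzero object of $\cU_{neat}$, on which $S$ is faithful.

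The reverse inclusion, however, has a genuine gap, and it sits precisely at the step you flag as delicate but then assert rather than prove. You claim that if $x$ lies in the support of every nonzero module (in particular every projective $P$), then $R_x(P) \in \cU_{neat}$. This does not follow: $\Phi_x(P) \neq 0$ only says that \emph{at least one} indecomposable summand of $R_x(P)$ has nonzero dimension, not that \emph{all} of them do. Bridging this is the actual content of the paper's Lemma \ref{lem:not_neat_not_faith}: if some $R_x(M)$ has an indecomposable summand of dimension $0$, then the underlying superspace of $\Phi_x(M)$ has strictly smaller dimension than $M$, hence (by Deligne's characterization) there is a partition $\lambda$ with $S^{\lambda}(M) \neq 0$ but $S^{\lambda}(\Phi_x(M)) = 0$; since $\Phi_x$ is symmetric monoidal, $\Phi_x(S^{\lambda}(M)) = 0$, producing a nonzero $G$-module killed by $\Phi_x$. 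This Schur-functor mechanism is exactly how one promotes a failure of neatness (witnessed only on $\G^{(1|1)}$-summands, which are not $G$-submodules) to an honest object of $\Rep(G)$ annihilated by $\Phi_x$; your sketch contains no substitute for it.

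There is also a circularity problem: you invoke Lemma \ref{lem:neat_criterion} (the neatness criterion), but in the paper that lemma is deduced from Proposition \ref{prop:proj_support}, which is itself deduced from the very proposition you are proving. Using it here would require an independent proof. Two further points of comparison with the paper's route: (i) the paper's argument never uses quasi-reductivity or enough projectives --- it works for an arbitrary algebraic supergroup, via Lemma \ref{lem:not_neat_not_faith} together with Corollary \ref{cor:minuscule_neat} applied inside $\Rep(\mathbb{M})$; your detour through projectives restricts the statement unnecessarily; (ii) you do not address why $x$ must be \emph{nilpotent} (membership in $\g_{neat}$ requires this): the paper gets it from the fact that $R_x(M) \in \cU$ for all $M$ forces $i_x: \mathbb{M} \to G$ to factor through $\mathbb{M} \twoheadrightarrow \G^{(1|1)}$.
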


\begin{proof}
First, let us show that for any $M \in \Rep(G)$, $\g_{neat} \subset supp(M)$. Indeed, let $x \in \g_{neat}$, let $i_x: \G^{(1|1)} \to G$ be the corresponding homomorphism and $R_x:\Rep(G) \to \cU$ the corresponding restriction. 

If $M \neq 0$, then $R_x(M)$ has at least one indecomposable summand, and since $x$ is neat, this summand is of non-zero dimension. Hence $\Phi_x(M) \neq 0$, and $x \in supp(M)$. This proves
$$ \g_{neat} \subset \bigcap_{M \in \Rep(G),M\neq 0} supp(M).$$
 
Next, we prove the inclusion in the other direction. 

Let $x \in \g_{\bar{1}}$. Let $i_x: \mathbb{M} \to G$ be the corresponding homomorphism and $R_x:\Rep(G) \to \Rep(\mathbb M)$ the corresponding restriction.

Assume that $x \in supp(M)$ for all non-zero $M \in \Rep(G)$; that is, $\Phi_x(M) \neq 0$ for all non-zero $M \in \Rep(G)$. 

By Lemma \ref{lem:not_neat_not_faith} below, this means that for all $M \in \Rep(G)$, the indecomposable summands of $R_x(M)$ have non-zero dimension. By Corollary \ref{cor:minuscule_neat}, this implies that
$$R_x(M) \in \cU_{neat} \subset \cU \subset \Rep(\mathbb{M}) \; \text{ for all } \; M \in \Rep(G).$$ The fact that $R_x(M) \in \cU$ for all $M$ implies that the embedding $i_x$ factors through the quotient map $\mathbb{M} \to \G^{(1|1)}$, and so $x$ is nilpotent. The fact that $R_x(M) \in \cU_{neat}$ for all $M$ implies that $x \in \g_{neat}$, as required.
\end{proof}

\begin{lemma}\label{lem:not_neat_not_faith}
 Let $x \in \g_{\bar{1}}$ and $M \in \Rep(G)$ be such that $R_x(M)$ has at least one indecomposable summand of dimension $0$ (so $x$ is not ``neat'' on $M$). Then the functor $$\Phi_x:\Rep(G) \to \Rep(OSp(1|2))$$ annihilates some non-zero module in $\Rep(G)$.
\end{lemma}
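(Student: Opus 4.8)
The plan is to construct an explicit module that is annihilated by $\Phi_x$, using the indecomposable summand of $R_x(M)$ that has dimension zero. Recall that $\Phi_x = S \circ R_x$, and that the semisimplification functor $S: \Rep(\mathbb{M}) \to \Rep(OSp(1|2))$ annihilates precisely the objects whose indecomposable summands all have categorical dimension zero. So the statement reduces to producing a nonzero $N \in \Rep(G)$ such that \emph{every} indecomposable summand of $R_x(N)$ has dimension zero.

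\textbf{Reduction to the nilpotent case.} First I would reduce to the situation where $x$ is nilpotent, i.e. where $R_x$ factors through $\cU = \Rep(\G^{(1|1)})$. Write $[x,x] = y_s + y_n$ with $y_s$ semisimple. By Lemma \ref{lem:generalfunctor} we have $\Phi_x(M) \cong T(M^{y_s})$, and the proof of Lemma \ref{lem:M-semisimplification} shows that any $\mathbb{M}$-submodule on which $y_s$ acts by a nonzero scalar already has dimension zero. Thus the hypothesis that $R_x(M)$ has a dimension-zero indecomposable summand is interesting precisely on the $y_s$-fixed part, and it suffices to find a nonzero $N$ all of whose $R_x$-summands have dimension zero after passing to $M^{y_s}$; equivalently one works inside $\cU$ with the induced odd nilpotent action. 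This lets me replace $\Rep(\mathbb{M})$ by $\cU$ and assume $R_x(M)$ contains a summand isomorphic to $M_{2k+1}$ (up to parity) for some $k$, these being exactly the odd-indexed, dimension-zero indecomposables.

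\textbf{Constructing the annihilated module.} The key idea is to build $N$ from $M$ by tensoring and taking an appropriate summand, exploiting the Clebsch--Gordan rules computed earlier. Since $R_x(M)$ has a summand $M_{2k+1}$ (the odd-dimensional-zero type), I would consider a tensor power or a tensor product of $M$ with itself or with its dual, and track how the summands multiply under $R_x$, using that $R_x$ is monoidal: $R_x(M \otimes M') \cong R_x(M) \otimes R_x(M')$. By the tensor-product decomposition lemma, $M_{2k+1} \otimes M_{2m+1} \cong \bigoplus_{s} M_{2s+1} \oplus \Pi M_{2s+1}$, so a product of two odd-type summands stays entirely within the dimension-zero (odd-index) indecomposables. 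The strategy is therefore to isolate, via a Tannakian/projector argument, a nonzero subquotient or direct summand $N$ of a suitable tensor construction on $M$ whose restriction $R_x(N)$ consists \emph{only} of such odd-type summands, so that $S(R_x(N)) = 0$ while $N \neq 0$ as a $G$-module. Concretely, one may take $N$ to be the image in $\Rep(G)$ of an idempotent (coming from the Krull--Schmidt decomposition over $\G^{(1|1)}$ together with the faithfulness of $R_x$) that projects onto the ``odd part'', and then check that this idempotent is realized by a genuine $G$-morphism because $\End_G(-)$ surjects appropriately, or alternatively build $N$ as an explicit kernel/cokernel of a $G$-equivariant map constructed from the $OSp(1|2) \to G$ data.

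\textbf{Main obstacle.} The hard part will be passing from the purely $\G^{(1|1)}$-module decomposition to an actual $G$-module: the summand $M_{2k+1}$ of $R_x(M)$ need not itself be the restriction of a $G$-submodule, since $R_x$ is only faithful, not full, and the Krull--Schmidt idempotents over $\G^{(1|1)}$ need not lift to $G$-equivariant idempotents. The natural way around this is to work at the level of the Tannakian category and the functor $\Phi_x$ directly: rather than splitting $M$, I would argue that the assumption $\Phi_x(M') \neq 0$ for \emph{all} nonzero $M'$ forces strong semisimplicity-type constraints incompatible with the existence of an odd-type summand, and then derive a contradiction, or else produce the desired annihilated object abstractly as $\ker$ or $\operatorname{coker}$ of a canonically defined $G$-morphism (for instance one built from the coevaluation $\triv \to M \otimes M^*$ composed with the projection onto a dimension-zero isotypic piece). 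Verifying that this canonical morphism is nonzero yet lands entirely in the dimension-zero part under $R_x$ is where the real work lies.
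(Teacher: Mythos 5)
Your reduction of the statement to ``find a nonzero $N \in \Rep(G)$ all of whose indecomposable summands under $R_x$ have dimension zero'' is correct, and your handling of the semisimple part $y_s$ matches what the paper does via Lemma \ref{lem:generalfunctor}. But the core of the proof is exactly the step you leave open. The obstacle you name is real and fatal to your construction: the Krull--Schmidt idempotents of $R_x(M)$ live in $\End_{\G^{(1|1)}}(M)$, not in $\End_G(M)$, so the dimension-zero summand $M_{2k+1}$ has no reason to come from a $G$-submodule, and tensoring does not help --- $R_x(M\otimes M) \cong R_x(M)\otimes R_x(M)$ contains products of two \emph{even}-type summands, which by the Clebsch--Gordan lemma are again even-type, so no tensor power of $M$ restricts entirely to dimension-zero indecomposables. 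Your fallback suggestions (a contradiction from ``strong semisimplicity-type constraints,'' or a kernel/cokernel of some canonical map built from coevaluation) are not carried out, and it is not clear how to make either work; in particular the projection onto the dimension-zero isotypic piece is again not $G$-equivariant.

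The missing idea is to use Schur functors, which are the \emph{canonical $G$-equivariant projectors} your sketch is groping for: the Young symmetrizer acts on $M^{\otimes n}$ by $G$-morphisms, so $S^{\lambda}(M)$ is automatically an object of $\Rep(G)$, and Schur functors commute with any SM $\fk$-linear functor, so $\Phi_x(S^{\lambda}(M)) \cong S^{\lambda}(\Phi_x(M))$. Now identify $\Phi_x(M)$ with the direct sum of the nonzero-dimension summands of $R_x(M)$; by hypothesis at least one summand is discarded, so the underlying vector superspace of $\Phi_x(M)$ is strictly smaller than that of $M$ (in both graded components it is $\leq$, and strictly smaller in total). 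By Deligne's theorem (\cite{D}, \cite{EHS}) the set of partitions $\lambda$ with $S^{\lambda}(V) = 0$ depends only on the superdimension pair of $V$ (the hook condition), and a strict inclusion of superspaces produces a partition $\lambda$ with $S^{\lambda}(M) \neq 0$ but $S^{\lambda}(\Phi_x(M)) = 0$. Then $N := S^{\lambda}(M)$ is a nonzero $G$-module with $\Phi_x(N) = 0$, finishing the proof in one stroke and bypassing the idempotent-lifting problem entirely. Without this (or some equivalent device) your argument does not close.
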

\begin{proof}
Consider the vector superspaces $M$ and $\Phi_x(M)$. The latter can be identified with a subspace of $R_x(M)$ which is the direct sum of all summands of $R_x(M)$ which have non-zero superdimension.

Since $\Phi_x$ is a SM functor, for every partition $\lambda$ we have: $$S^{\lambda}(M)  = 0 \; \text{ implies } \;S^{\lambda}(\Phi_x(M)) =0.$$

As vector spaces, the dimension of $M$ is greater than that of $\Phi_x(M)$. Hence these vector superspaces are not isomorphic. This implies (cf. \cite{D}, \cite{EHS}) that there exists a partition $\lambda$ such that $S^{\lambda}(M) \neq 0$ while $S^{\lambda}(\Phi_x(M)) =0$. Hence $\Phi_x(S^{\lambda}(M))=0$ while $S^{\lambda}(M) \neq 0$, as required.
\end{proof}

The following lemma is useful for determining the support of projective modules (Proposition \ref{prop:proj_support}).
\begin{lemma}\label{lem:annihilating_proj}
 Let $\T, \T'$ be two tensor categories, and $F: \T \to \T'$ be a SM $\fk$-linear functor (not necessarily exact). Assume that $F$ annihilates some object $M \neq 0$. Then $F$ annihilates all projective objects in $\T$.
 \end{lemma}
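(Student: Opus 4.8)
The plan is to exhibit each projective $P$ as a direct summand of $P \otimes M^* \otimes M$ and then apply $F$. The point is that $F$ is $\fk$-linear and monoidal, hence additive, so it preserves direct summands; moreover $F(P \otimes M^* \otimes M) \cong F(P) \otimes F(M)^* \otimes F(M)$ since $F$ commutes with duals, and this vanishes because $F(M) = 0$. Thus once $P$ is realized as a summand of $P \otimes M^* \otimes M$, we immediately conclude $F(P) = 0$.

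First I would show that the evaluation morphism $\mathrm{ev}_M \colon M^* \otimes M \to \triv$ is an epimorphism. Since $M \neq 0$, the zigzag (duality) identities force $\mathrm{ev}_M \neq 0$, so its image is a nonzero subobject of $\triv$; as $\End(\triv) = \fk$, the unit $\triv$ is simple, and hence $\mathrm{ev}_M$ is surjective. Tensoring with $P$ and using that $-\otimes-$ is exact, I obtain an epimorphism $P \otimes M^* \otimes M \twoheadrightarrow P$. Because $P$ is projective, every epimorphism onto $P$ splits, so $P$ is a direct summand of $P \otimes M^* \otimes M$, completing the reduction outlined above.

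The step I expect to require the most care is choosing an argument that does not rely on categorical dimensions. The naive attempt---splitting $P \hookrightarrow P \otimes M \otimes M^* \to P$ via coevaluation followed by evaluation---produces the scalar $\dim M$, which may well vanish in the super setting, where simple objects of dimension zero are abundant in $\Rep(G)$ for a supergroup $G$. Replacing this by the observation that $\mathrm{ev}_M$ is an epimorphism onto the simple object $\triv$, and then invoking projectivity of $P$ to split the resulting surjection, sidesteps any dimension hypothesis entirely. This is precisely the point at which projectivity of $P$ is essential, since the conclusion plainly fails for arbitrary objects.
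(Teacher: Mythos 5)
Your proof is correct and follows essentially the same route as the paper: both arguments realize $P$ as a direct summand of $P\otimes M\otimes M^*$ by noting that $\id_P\otimes ev$ is an epimorphism (since $ev$ is surjective onto the simple unit $\triv$) which splits by projectivity of $P$, and then apply $F$, which kills this object because $F(M)=0$. The only difference is cosmetic: you spell out why $ev$ is surjective and conclude via $F$ preserving retracts, while the paper concludes via $F(\id_P\otimes ev)=0$.
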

 \begin{proof}[Proof of \ref{lem:annihilating_proj}]
The map $ev: M\otimes M^* \to \triv$ is surjective in any tensor category, and $F(ev)=0$. 
Let $P$ be a projective object in $\T$. Then $\id_P \otimes ev: P\ \otimes M\otimes M^* \to P$ is surjective so it splits. 
Since $F(\id_P\otimes ev) =0$, we conclude that $F(P)=0$.
 \end{proof}
\begin{proposition}\label{prop:proj_support}
 For any non-zero projective module $P$ in $\Rep(G)$, we have: $supp(P) = \g_{neat}$. 
\end{proposition}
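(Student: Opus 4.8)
The plan is to establish the two inclusions $\g_{neat} \subseteq supp(P)$ and $supp(P) \subseteq \g_{neat}$ separately, each of which falls out of one of the two immediately preceding results, so that almost no new computation is required.

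For the inclusion $\g_{neat} \subseteq supp(P)$ I would simply invoke Proposition \ref{prop:intersect_support}, which asserts that $\g_{neat} = \bigcap_{M \neq 0} supp(M)$. In particular $\g_{neat} \subseteq supp(M)$ for every nonzero $M \in \Rep(G)$, and applying this to $M = P$ (which is nonzero by hypothesis) gives the claim at once. Note that this direction uses nothing about projectivity; it holds for an arbitrary nonzero module.

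For the reverse inclusion $supp(P) \subseteq \g_{neat}$ the idea is to show that every $x \in \g_{\bar 1} \setminus \g_{neat}$ is killed by $\Phi_x$ on $P$. Fix such an $x$. Since $x \notin \g_{neat} = \bigcap_{M \neq 0} supp(M)$ by Proposition \ref{prop:intersect_support}, there must exist a nonzero module $M \in \Rep(G)$ with $x \notin supp(M)$, that is, $\Phi_x(M) = 0$. Now $\Phi_x \colon \Rep(G) \to \Rep(OSp(1|2))$ is a SM $\fk$-linear functor between tensor categories (as recorded in Section \ref{ssec:gen_def_phi}) which annihilates the nonzero object $M$, so Lemma \ref{lem:annihilating_proj} applies verbatim and tells us that $\Phi_x$ annihilates every projective object of $\Rep(G)$. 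In particular $\Phi_x(P) = 0$, so $x \notin supp(P)$. As $x$ ranged over all elements outside $\g_{neat}$, this yields $supp(P) \subseteq \g_{neat}$, and combined with the first inclusion proves the equality.

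I do not anticipate a serious obstacle, as both halves are direct consequences of already-established statements; the argument amounts to quoting Proposition \ref{prop:intersect_support} and Lemma \ref{lem:annihilating_proj} correctly. The only point deserving a moment's care is checking that the hypotheses of Lemma \ref{lem:annihilating_proj} are satisfied, namely that $\Rep(G)$ and $\Rep(OSp(1|2))$ are tensor categories and that $\Phi_x$ is a SM $\fk$-linear functor between them, but all of this is in place from the construction of $\Phi_x$. (The statement is of course only of interest when $\Rep(G)$ admits nonzero projectives, which by Proposition \ref{prop:enough_proj} means $G$ is quasi-reductive; otherwise it is vacuously true.)
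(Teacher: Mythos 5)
Your proposal is correct and is essentially identical to the paper's own proof: both directions are obtained exactly as you describe, quoting Proposition \ref{prop:intersect_support} for the inclusion $\g_{neat} \subset supp(P)$ and combining Proposition \ref{prop:intersect_support} with Lemma \ref{lem:annihilating_proj} for the reverse inclusion. No gaps; your extra remarks on the hypotheses of Lemma \ref{lem:annihilating_proj} are fine but not needed beyond what the paper already records.
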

 \begin{proof}
By Proposition \ref{prop:intersect_support}, we have: $$ \g_{neat} \subset supp(P).$$
 
 In the other direction, let $x \in \g_{\bar{1}}$, and assume $x \notin \g_{neat}$. By Proposition \ref{prop:intersect_support}, there exists $M \in \Rep(G)$ such that $\Phi_x(M) =0$. Thus by Lemma \ref{lem:annihilating_proj}, $\Phi_x(P)=0$ for all projective $P$, so $x \notin supp(P)$. 
\end{proof}

We conjecture that for quasi-reductive supergroups the converse of the statement in Proposition \ref{prop:proj_support} also holds (cf. \cite{DS}):
\begin{conjecture}\label{conj:supp}
 Let $G$ be a quasi-reductive group, and $M \in \Rep(G)$. If $supp(M) = \g_{neat}$, then $M$ is projective.
\end{conjecture}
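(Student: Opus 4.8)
The plan is to reduce Conjecture \ref{conj:supp} to a projectivity criterion for the Duflo--Serganova associated variety, and then to attack that criterion. Since $\g_{neat}\subseteq supp(M)$ always holds by Proposition \ref{prop:intersect_support}, the conjecture is equivalent (by contraposition) to the assertion that a non-projective $M$ has a \emph{non-neat} odd element in its support. The natural source of such elements is the self-commuting cone $X:=\{x\in\g_{\bar 1}:[x,x]=0\}$: every nonzero $x\in X$ is non-neat, since on a faithful representation such an $x$ acts as a nonzero square-zero odd operator and therefore produces at least one indecomposable summand $M_1$ of superdimension zero.

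First I would pin down the restriction of the support to $X$. For $x\in X$ the semisimple part $y_s$ of $[x,x]$ vanishes, so $M^{y_s}=M$ and Lemma \ref{lem:generalfunctor} gives $\Phi_x(M)=T(M)$, with $T=Gr^{ev}$ applied to the square-zero action of $x$. As a $\G^{(1|1)}$-module $M$ then decomposes into copies of $M_0=\triv$ and $M_1$, and since $T(M_0)=\widetilde{M}_0$ while $T(M_1)=0$, the object $T(M)$ is (up to parity) a sum of $\dim\left(\Ker(x\rvert_M)/xM\right)$ copies of the trivial $OSp(1|2)$-module. As explained in Section \ref{ssec:DS}, this is exactly the homology computed by the Duflo--Serganova functor, so
$$\Phi_x(M)\neq 0 \iff DS_x(M)\neq 0 \qquad \text{for all } x\in X.$$
Hence $supp(M)\cap X$ is the DS associated variety $X_M$, and the hypothesis $supp(M)=\g_{neat}$ forces $X_M=\{0\}$, since no nonzero element of $X$ is neat.

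It therefore remains to prove the following \emph{DS projectivity criterion}: for quasi-reductive $G$, a module $M\in\Rep(G)$ with $X_M=\{0\}$ is projective. This is the heart of the matter and the step I expect to be the main obstacle; it is the analogue of the Dade/Carlson detection theorem in support-variety theory. Here $\Rep(G)$ has enough projectives (Proposition \ref{prop:enough_proj}), so projectivity of $M$ is detected homologically by the vanishing of $\Ext^{>0}_G(M,-)$, and one would try to compare the associated variety $X_M$ with a cohomological support variety built from the action of the relative cohomology ring $H^\bullet(\g,\g_{\bar 0};\fk)$ on $\Ext^\bullet_G(M,M)$. The aim is a rank-variety description in which $X_M=\{0\}$ forces the cohomological support to be a single point, a standard argument then yielding finite projective dimension and hence projectivity in the (self-injective) category $\Rep(G)$.

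The principal difficulty is establishing this comparison for arbitrary quasi-reductive $G$. For basic classical $\g$ the identification of $X_M$ with a cohomological support variety and the ensuing projectivity criterion are available (cf. \cite{DS} and subsequent work on support varieties), and the reduction above then closes the argument. In the general quasi-reductive case the ring $\Ext^\bullet_G(\triv,\triv)$ and its relation to $X$ are not understood well enough, which is precisely the gap keeping the statement conjectural. A possible route avoiding a full support-variety theory would be to induct on a composition series of $M$, exploiting the splitting of short exact sequences after tensoring with a projective (as in the proof of Theorem \ref{thrm:JM}) together with the two-term exactness of $DS_x$; but ruling out non-neat elements with $[x,x]\neq 0$ a priori occurring in $supp(M)$ would still demand a separate argument.
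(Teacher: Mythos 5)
The statement you were asked to prove is a \emph{conjecture} in the paper: the authors give no proof of it. Immediately after stating it they only remark that it holds when $G$ is quasi-reductive of Kac--Moody type, because \cite{DS} establishes the projectivity criterion ($M$ is projective iff $supp(M)\cap\mathcal{N}_{comm}=\{0\}$) in that setting and $\g_{neat}\cap\mathcal{N}_{comm}=\{0\}$; the general case is explicitly left open. Your proposal performs essentially this same reduction. You correctly identify $supp(M)\cap\mathcal{N}_{comm}$ with the Duflo--Serganova associated variety $X_M$ (this follows, as you say, from Lemma \ref{lem:generalfunctor} together with the discussion in Section \ref{ssec:DS}, since for $[x,x]=0$ one has $y_s=0$ and $\Phi_x(M)$ is a sum of trivial modules of total dimension $\dim DS_x(M)$), and your direct argument that no nonzero self-commuting element is neat (a faithful representation must contain an $M_1$-summand of superdimension zero) is in fact cleaner and more general than the paper's own justification, which derives $\g_{neat}\cap\mathcal{N}_{comm}=\{0\}$ from Proposition \ref{prop:proj_support} in the Kac--Moody case. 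So the hypothesis $supp(M)=\g_{neat}$ indeed forces $X_M=\{0\}$; all of that part of your argument is sound.

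The genuine gap is the one you name yourself: the implication ``$X_M=\{0\}$ for quasi-reductive $G$ implies $M$ projective'' is exactly the open problem, and nothing in your sketch (cohomological support varieties, the action of $H^\bullet(\g,\g_{\bar 0};\fk)$ on $\Ext^\bullet_G(M,M)$, rank-variety descriptions) is available beyond the basic classical / Kac--Moody situations already covered by \cite{DS}. Note also that this DS criterion is \emph{strictly stronger} than the conjecture, since its hypothesis only constrains the support on the self-commuting cone rather than on all of $\g_{\bar 1}$; by routing the argument through it you are attacking a harder statement, and you do not exploit the full strength of the assumption $supp(M)=\g_{neat}$ (under which there are no non-neat elements in $supp(M)$ whatsoever, so your closing worry about elements with $[x,x]\neq 0$ in the support does not actually arise). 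In summary: your text is an accurate reduction plus an honest identification of the obstruction, and it agrees with the paper's own discussion following the conjecture --- but it is not a proof, and the paper contains none either.
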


\begin{remark}
 The intersection of $supp(M)$ with the self-commuting cone $$\mathcal{N}_{comm}:=\{x \in \g_{\bar{1}}: [x,x] =0\}$$ has been studied extensively in \cite{DS} (there it is called the associated variety of $M$). 
 
 In \cite{SerVaintrob}, it has been shown there that for quasi-reductive supergroups $G$ of Kac--Moody type, we have: $$M\in \Rep(G) \; \text{ is projective iff } \; supp(M) \cap \mathcal{N}_{comm} = \{0\}.$$
In such cases, we have: $\g_{neat} \cap \mathcal{N}_{comm} =\{0\}$ by Proposition \ref{prop:proj_support}, and this implies Conjecture \ref{conj:supp}. However, it would be interesting to obtain this result in greater generality.
\end{remark}

Finally, in the lemma below, we give a convenient criterion to determine when $x \in \g_{neat}$.
\begin{lemma}\label{lem:neat_criterion}
 Let $G$ be a quasi-reductive algebraic supergroup, and $V$ be a faithful representation of $G$. Let $x\in \g_{\bar{1}}$, and assume $x\rvert_V$ is nilpotent and neat. 
 
 Then $x$ is nilpotent and $x \in \g_{neat}$.
\end{lemma}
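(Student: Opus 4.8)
The plan is to treat the two conclusions separately: nilpotency is immediate, while neatness is the substance. For nilpotency, faithfulness of $V$ gives an embedding $\g\hookrightarrow\gl(V)$, and $x\rvert_V$ nilpotent means $[x,x]\rvert_V=2(x\rvert_V)^2$ is a nilpotent operator on $V$; since $[x,x]\in\g\subset\gl(V)$, it is nilpotent in $\g$, so $x$ is nilpotent and, as in Section \ref{ssec:phi_nilp}, the functors $R_x\colon\Rep(G)\to\cU$ and $\Phi_x=S\circ R_x$ are defined through $\G^{(1|1)}$.

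For neatness I would reduce everything to producing a single nonzero projective that $\Phi_x$ does not kill. Indeed, Lemma \ref{lem:annihilating_proj} in contrapositive form says that if $\Phi_x(P)\neq0$ for one projective $P$, then $\Phi_x$ annihilates no nonzero object; Lemma \ref{lem:not_neat_not_faith}, again contrapositively, then forces $R_x(M)\in\cU_{neat}$ for every $M\in\Rep(G)$, which is precisely the statement $x\in\g_{neat}$. So the whole difficulty concentrates in exhibiting such a $P$.

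The key step is to build $P$ using the ambient group $GL(V)$, whose even part $GL(V_{\bar 0})\times GL(V_{\bar 1})$ is reductive, so that $GL(V)$ is quasi-reductive. Because $x\rvert_V$ is neat, $V\cong\bigoplus_i M_{2k_i}$ as a $\G^{(1|1)}$-module; since $M_{2k}\cong S^*(\widetilde M_{2k})=\Res^{OSp(1|2)}_{\G^{(1|1)}}\widetilde M_{2k}$, the $x$-action on $V$ lifts block by block to an action of $OSp(1|2)$, giving a homomorphism $j\colon OSp(1|2)\to GL(V)$ with $dj(X)=x$. As $\Rep(OSp(1|2))$ is semisimple, every $GL(V)$-module $W$ restricted along $j$ and then to $\G^{(1|1)}$ is a sum of the $M_{2k}$, so $R_x^{GL(V)}(W)\in\cU_{neat}$ and hence $\Phi_x^{GL(V)}(W)\neq0$ for all $W\neq0$ (using that $S$ is faithful on $\cU_{neat}$). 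I emphasize that this elementary lift is what avoids circularity: one cannot simply invoke Theorem \ref{thrm:JM} for $GL(V)$, since that presupposes $x\in\gl(V)_{neat}$, which is the very conclusion for $GL(V)$.

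Finally I would transport projectivity down to $G$. Choose a nonzero projective $W\in\Rep(GL(V))$ (Proposition \ref{prop:enough_proj}) and set $P:=\Res^{GL(V)}_G W$. Since $R_x^G\circ\Res^{GL(V)}_G=R_x^{GL(V)}$ (both are restriction to $\G^{(1|1)}$ along $x$, as $i_x^{GL(V)}$ factors through $G$), one gets $\Phi_x^G(P)=S\big(R_x^{GL(V)}(W)\big)=\Phi_x^{GL(V)}(W)\neq0$. The main obstacle, and the one genuinely technical point, is that $P$ is $G$-projective, i.e.\ that $\Res^{GL(V)}_G$ preserves projectives; this follows exactly as in the proof of Proposition \ref{prop:enough_proj}, since by PBW $U(\gl(V))$ is free over $U(\g)$, making both induction and coinduction along $\g\subset\gl(V)$ exact, so that their adjoint $\Res^{GL(V)}_G$ preserves projectives (one should check the bookkeeping for the possibly infinite-dimensional (co)induced modules, but this is routine and independent of the neatness hypothesis). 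With a nonzero projective $P$ satisfying $\Phi_x^G(P)\neq0$ in hand, the reduction of the second paragraph yields $x\in\g_{neat}$, completing the proof.
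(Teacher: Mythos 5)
Your opening moves are fine: the Jordan--Chevalley argument for nilpotency, the reduction of neatness to exhibiting a \emph{single} nonzero projective $P\in\Rep(G)$ with $\Phi_x(P)\neq 0$ (via the contrapositives of Lemmas \ref{lem:annihilating_proj} and \ref{lem:not_neat_not_faith}), and the block-by-block lift of $x\rvert_V$ to a homomorphism $OSp(1|2)\to GL(V)$ are all correct; this skeleton parallels the paper's proof, which also reduces everything to non-vanishing of $\Phi_x$ on projectives (through Proposition \ref{prop:proj_support}). The gap is your final step, the claim that $\Res^{GL(V)}_G$ preserves projectives ``exactly as in the proof of Proposition \ref{prop:enough_proj}, since by PBW $U(\gl(V))$ is free over $U(\g)$.'' That justification is invalid. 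In Proposition \ref{prop:enough_proj} the subgroup is $G_{\bar 0}\subset G$, so the two groups have the \emph{same} even part, and this is precisely what lets one identify supergroup induction with Lie-algebra coinduction $\Hom_{U(\g_{\bar 0})}(U(\g),-)$ via Harish--Chandra pairs; exactness then follows from PBW. For $G\subset GL(V)$ the even parts differ: $\Ind_G^{GL(V)}$ is \emph{not} $\Hom_{U(\g)}(U(\gl(V)),-)$, but involves inducing along $G_{\bar 0}\subset GL(V)_{\bar 0}$, i.e.\ taking sections of bundles over $GL(V)_{\bar 0}/G_{\bar 0}$; likewise, projectivity in $\Rep(G)$ is relative $(\g,G_{\bar 0})$-projectivity, not $U(\g)$-projectivity. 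A purely PBW argument would prove too much: $U(\mathfrak{sl}_2)$ is free over $U(\fb)$, yet restriction from $SL_2$ to its Borel subgroup $B$ destroys projectivity --- every finite-dimensional $SL_2$-module is projective, while $\Rep(B)$ has no nonzero projectives at all (if $P\neq 0$ were projective, $\fk$ would split off $P\otimes P^*$ and hence be projective, contradicting $\Ext^1_B(\fk,\fk_2)\neq 0$, which is witnessed by the standard representation twisted by a character). What your step actually requires is exactness of $\Ind_G^{GL(V)}$, equivalently affineness of $GL(V)/G$; this is true in your situation, but only because $G_{\bar 0}$ is reductive, via Matsushima's criterion applied to $GL(V)_{\bar 0}/G_{\bar 0}$ --- a genuine geometric input that your proof neither states nor supplies.

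The detour through $GL(V)$ is also unnecessary, and the paper's proof shows how to avoid it: since $V$ is faithful and projective objects of $\Rep(G)$ are also injective, every projective $P$ is a direct summand of a finite direct sum of modules $V^{\otimes r}\otimes(V^*)^{\otimes s}$. Since $R_x(V)\in\cU_{neat}$ by hypothesis and $\cU_{neat}$ is a Karoubian rigid SM subcategory of $\cU$ (closed under duals, tensor products, direct sums and direct summands, by the Clebsch--Gordan computation), it follows at once that $R_x(P)\in\cU_{neat}$, hence $\Phi_x(P)=S(R_x(P))\neq 0$ for every projective $P\neq 0$; quasi-reductivity of $G$ guarantees such a $P$ exists. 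This one observation replaces both your $OSp(1|2)$-lift and the problematic restriction-of-projectives step, and it is exactly where faithfulness of $V$ enters the paper's argument.
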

\begin{proof}

Let $R_x: \Rep(G) \to \Rep(\mathbb M)$ be the restriction functor associated to $x$.

Let $\mathcal{A} \subset \Rep(G)$ be the full subcategory of (finite) direct sums of $G$-modules of the form $V^{\otimes r }\otimes (V^*)^{\otimes s}$, $r,s \in \mathbb{Z}_{\geq 0}$. Let $Kar(\mathcal{A})$ be the full subcategory of $\Rep(G)$ whose objects are direct summands of objects in $\mathcal{A}$. Since $V$ is faithful, any projective module sits in $Kar(\mathcal{A})$.

The full subcategory $\cU_{neat} \subset \Rep(\mathbb M)$ is closed under taking direct sums, direct summands and tensor products.

Since $x\rvert_V$ is nilpotent and neat, we have: $R_x(V) \in \cU_{neat} $, so $R_x(M) \in \cU_{neat}$ for all $M \in Kar(\mathcal{A})$ {(in particular, this implies that the group homomorphism $i_x:\mathbb{M}\to G$ factors through the homomorphism $\mathbb{M} \twoheadrightarrow \G^{(1|1)}$ and so $x$ is nilpotent)}.

Thus $\Phi_x(P) \neq 0$ for any projective module $P \in \Rep(G)$, {$P\neq 0$}. Since we assumed that $G$ is quasi-reductive, there exists at least one projective module $P\neq 0$ in $\Rep(G)$. Using Proposition \ref{prop:proj_support}, we conclude that $x \in supp(P)=\g_{neat}$.
\end{proof}

\section{Reductive envelopes}\label{sec:red_envelopes}

\subsection{}
Let $\mathcal{C}_{neat} \subset \Rep(G)$ be the full subcategory with objects 
$M$ so that \InnaC{every nilpotent} $x \in \g_{\bar{1}}$ is neat on $M$. This is a full 
Karoubi additive rigid SM subcategory.

Let $S:\mathcal{C}_{neat} \to \overline{\mathcal{C}_{neat}}$ be the semisimplification of $\mathcal{C}_{neat}$. 

\begin{proposition}\label{prop:red_envelopes}
 Let $G$ be an algebraic supergroup. Then $\overline{\mathcal{C}_{neat}} \cong 
\Rep(\overline{G})$ for some reductive\footnote{An algebraic supergroup $G$ is 
called {\it reductive} if $\Rep(G)$ is semisimple.} algebraic pro-supergroup 
$\overline{G}$. If \InnaC{every} indecomposable object in $\mathcal{C}_{neat}$ 
has non-zero dimension then we have a homomorphism $G \to \overline{G} \InnaC{\rtimes \mu_2}$. 
\end{proposition}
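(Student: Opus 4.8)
The plan is to prove the two assertions in turn, the first by verifying the hypotheses of Lemma \ref{lem:splittannakian}, the second by constructing a monoidal section of the semisimplification functor.

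For the first assertion, I would check that $\overline{\mathcal{C}_{neat}}$ is a super-Tannakian category containing $\Pi\triv$. By the general theory of semisimplification, $\overline{\mathcal{C}_{neat}}$ is a semisimple rigid SM $\fk$-linear category; since every Hom space in $\mathcal{C}_{neat}$ is a subspace of a Hom space of $\Rep(G)$ it is finite-dimensional, and objects have finite length, so $\overline{\mathcal{C}_{neat}}$ is locally finite abelian. Because $\dim\triv\neq 0$, the unit $\triv$ is non-negligible and stays simple with $\End(\triv)=\fk$, so $\overline{\mathcal{C}_{neat}}$ is pre-Tannakian. To verify Deligne's criterion (each object killed by a Schur functor), I use that the forgetful functor $\omega\colon\mathcal{C}_{neat}\to\sVect$ is faithful and SM, and that $\mathcal{C}_{neat}$ is closed under Schur functors (being a Karoubian rigid SM subcategory): for $M\in\mathcal{C}_{neat}$, $\omega(S^{\lambda}(M))=S^{\lambda}(\omega(M))$, and since $\omega(M)\in\sVect$ is killed by some $S^{\lambda}$, faithfulness of $\omega$ gives $S^{\lambda}(M)=0$; as $S$ is SM, $S^{\lambda}(S(M))=S(S^{\lambda}(M))=0$, so $S(M)$ is killed by $S^{\lambda}$ too. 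Finally $\Pi\triv\in\mathcal{C}_{neat}$ (a single indecomposable summand of dimension $-1\neq 0$ on which every $x$ acts as zero, hence neatly), and it is not annihilated by $S$. Thus Lemma \ref{lem:splittannakian} applies and gives $\overline{\mathcal{C}_{neat}}\cong\Rep(\overline{G})$ for an algebraic pro-supergroup $\overline{G}$; semisimplicity of $\overline{\mathcal{C}_{neat}}$ means $\Rep(\overline{G})$ is semisimple, i.e.\ $\overline{G}$ is reductive.

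For the second assertion, assume every indecomposable object of $\mathcal{C}_{neat}$ has nonzero dimension. Then $S$ annihilates no nonzero object (an object is negligible exactly when all its indecomposable summands have dimension zero), so $S$ induces a bijection between isomorphism classes of indecomposables of $\mathcal{C}_{neat}$ and simple objects of $\overline{\mathcal{C}_{neat}}$. The plan is to build a monoidal section $\iota\colon\overline{\mathcal{C}_{neat}}\to\mathcal{C}_{neat}$ of $S$: on objects, send each simple to the corresponding indecomposable and extend additively; on morphisms, lift the scalar matrices of the semisimple category $\overline{\mathcal{C}_{neat}}$ to the corresponding scalar multiples of identity morphisms between the chosen representatives. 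Granting such an $\iota$, the composite
$$F\colon\ \Rep(\overline{G})\cong\overline{\mathcal{C}_{neat}}\xrightarrow{\ \iota\ }\mathcal{C}_{neat}\hookrightarrow\Rep(G)$$
is a SM $\fk$-linear functor, automatically exact since its source is semisimple. Moreover $\omega\circ\iota$ is faithful (both factors are) and hence a fiber functor on $\overline{\mathcal{C}_{neat}}$; using it to realize $\overline{\mathcal{C}_{neat}}\cong\Rep(\overline{G})$ makes $F$ compatible with the forgetful fiber functors, so by super-Tannakian formalism $F$ corresponds to a homomorphism $G\to\overline{G}$.

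The main obstacle is realizing $\iota$ as a genuinely coherent symmetric monoidal functor, and this is precisely where the nonzero-dimension hypothesis is indispensable. The definition on objects and morphisms is forced, and the tensor constraints $J_{\alpha\beta}\colon\iota(\bar L_\alpha)\otimes\iota(\bar L_\beta)\xrightarrow{\sim}\iota(\bar L_\alpha\otimes\bar L_\beta)$ exist because $S$ preserves the decomposition of a tensor product into indecomposables (no summand is lost). The difficulty is that the pentagon and hexagon identities for $\iota$ are known to hold only after applying $S$, i.e.\ up to negligible morphisms, which need not vanish in $\mathcal{C}_{neat}$; one must choose the $J_{\alpha\beta}$ so that these coherence identities hold on the nose. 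I would settle this either by a direct obstruction-vanishing argument exploiting that $\overline{\mathcal{C}_{neat}}$ is symmetric, semisimple and super-Tannakian in characteristic zero, or by invoking the existence of a monoidal splitting of semisimplification in this setting; this is the technical heart of the statement.
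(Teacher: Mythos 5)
Your proposal is correct and follows essentially the same route as the paper: the first assertion is proved there exactly as you do it (Schur-functor annihilation passed through the SM semisimplification functor, then Lemma \ref{lem:splittannakian} applied since $\Pi\triv\in\overline{\mathcal{C}_{neat}}$, with semisimplicity of $\overline{\mathcal{C}_{neat}}$ giving reductivity of $\overline{G}$). For the second assertion, the coherence problem you rightly single out as the technical heart is not resolved in the paper by any hand-built lift either --- the paper simply cites \cite[Corollary 3.11]{EO}, which is precisely the ``monoidal splitting of semisimplification under the nonzero-dimension hypothesis'' that you name as your fallback option, so your argument is complete once you commit to invoking that reference instead of the explicit construction.
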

\begin{remark}
 \InnaC{If $G$ is connected, then in the setting of Proposition \ref{prop:red_envelopes} we obtain a homomorphism $G \to \overline{G}$ by Remark \ref{rmk:conn_Tannakian}.}
\end{remark}

\begin{example}\label{ex:red_env}
\begin{enumerate}

\mbox{}

 \item For a purely even supergroup $G=G_{\bar 0}$ the group $\overline{G}$ is the reductive envelope $G_{\bar 0}^{red}$ of the algebraic group $G_{\bar 0}$, \cite{AndreKahn}.

 \item For $G = \G^{(1|1)}$ or $G=\mathbb{M}$, we have: $\mathcal{C}_{neat} = \cU_{neat}$, and we obtain $\overline{G} \cong OSp(1|2)$.
 Note that for $G=\mathbb{M}$, the homomorphism $G \to \overline{G}$ is not injective.
\end{enumerate}

\end{example}

\begin{proof}
 First of all, notice that the category $\overline{\mathcal{C}_{neat}}$ is super-Tannakian. Indeed, by Deligne's theorem (see \cite{D}), every object in $\Rep(G)$ is annihilated by some Schur functor, and so the same holds for every object in $\mathcal{C}_{neat}$. The semisimplification functor $\mathcal{C}_{neat} \to \overline{\mathcal{C}_{neat}}$ is SM, so every object in $\overline{\mathcal{C}_{neat}}$ is annihilated by some Schur functor. Applying Deligne's theorem again, we conclude that $\overline{\mathcal{C}_{neat}}$ is super-Tannakian. Since $\Pi(\triv)$ is an object of $\overline{\mathcal{C}_{neat}}$, Lemma \ref{lem:splittannakian} implies $\overline{\mathcal{C}_{neat}} \cong \Rep(\overline{G})$ for some algebraic pro-supergroup $\overline{G}$.
 Furthermore, the category $\overline{\mathcal{C}_{neat}}$ is semisimple. Hence $\overline{G}$ is reductive.

 It remains to check that there is a section $S^*: \overline{\mathcal{C}_{neat}} \to \mathcal{C}_{neat}$, which will induce a homomorphism $G \to \overline{G}$. But this is a consequence of \cite[Corollary 3.11]{EO}.
 \end{proof}
 \begin{proposition}\label{prop:red_envelopes-quasi} If $G$ is quasi-reductive then any indecomposable $M$ in $\mathcal{C}_{neat}$ has non-zero dimension.
   \end{proposition}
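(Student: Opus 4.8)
The plan is to reduce to the case where $M$ is a \emph{faithful} representation and then combine the neatness criterion of Lemma~\ref{lem:neat_criterion} with Corollary~\ref{cor:neatgroup}. The subtlety is that membership in $\mathcal{C}_{neat}$ only guarantees that each odd $x$ acts neatly on the single module $M$, whereas to conclude $\dim M \neq 0$ I want to upgrade this to the global condition $\g_{neat} = \g_{\bar 1}$; faithfulness is exactly what allows Lemma~\ref{lem:neat_criterion} to perform this upgrade.

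First I would let $\rho : G \to GL(M)$ be the defining representation and set $\overline{G} := \rho(G) \subseteq GL(M)$, a genuine (finite-dimensional) algebraic supergroup for which $M$ is faithful by construction. Since $\overline{G}_{\bar 0}$ is the image of the reductive group $G_{\bar 0}$, it is again reductive, so $\overline{G}$ is quasi-reductive. Writing $\overline{\g} = \mathrm{Lie}(\overline{G})$, the differential $d\rho : \g \to \overline{\g}$ is surjective and parity-preserving, hence surjective on odd parts $\g_{\bar 1} \twoheadrightarrow \overline{\g}_{\bar 1}$. Given $\overline{x} \in \overline{\g}_{\bar 1}$, I choose $x \in \g_{\bar 1}$ with $d\rho(x) = \overline{x}$; then $\overline{x}|_M = x|_M$ as odd operators on $M$, so neatness of $x$ on $M$ (which holds because $M \in \mathcal{C}_{neat}$) is equivalent to neatness of $\overline{x}$ on $M$.

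Next I would note that neatness of $\overline{x}$ on the faithful module $M$ means precisely that all indecomposable summands of $R_{\overline{x}}(M)$ have non-zero dimension; by Corollary~\ref{cor:minuscule_neat} this forces $R_{\overline{x}}(M) \in \cU_{neat}$, and in particular $\overline{x}|_M$ acts nilpotently. Then Lemma~\ref{lem:neat_criterion}, applied to the quasi-reductive group $\overline{G}$ and its faithful module $M$, yields $\overline{x} \in \overline{\g}_{neat}$. Since $\overline{x}$ was arbitrary, this gives $\overline{\g}_{neat} = \overline{\g}_{\bar 1}$; equivalently, every odd element of $\overline{\g}$ acts neatly on every representation of $\overline{G}$.

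Finally I would apply Corollary~\ref{cor:neatgroup} to $\overline{G}$: its hypothesis is exactly the statement just established, and its conclusion is that every indecomposable representation of $\overline{G}$ has non-zero (super)dimension. As $M$ is an indecomposable $\overline{G}$-module, this yields $\dim M \neq 0$. (Alternatively, at the last step one may invoke Proposition~\ref{prop:quasi_red_semisimple} to conclude that $\Rep(\overline{G})$ is semisimple and read off the dimension from the classification of simples.) The step that demands genuine care is the reduction to a faithful module: without it, neatness on $M$ alone does not imply $x \in \g_{neat}$, and it is precisely faithfulness that lets Lemma~\ref{lem:neat_criterion} convert ``neat on $M$'' into ``neat everywhere''.
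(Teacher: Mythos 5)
Your proof is correct and follows essentially the same route as the paper's: the paper likewise reduces to a faithful module by passing to the quotient $G' = G/\Ker(G \to GL(M))$ (your $\overline{G}=\rho(G)$), checks that $G'$ is quasi-reductive, applies Lemma \ref{lem:neat_criterion} to conclude $\mathrm{Lie}(G')_{neat} = \mathrm{Lie}(G')_{\bar 1}$, and then deduces $\dim M \neq 0$ via Proposition \ref{prop:quasi_red_semisimple} (itself a consequence of Corollary \ref{cor:neatgroup}, which you invoke directly). The only cosmetic differences are your explicit lifting of odd elements along the surjection $d\rho$ and your nilpotency check via Corollary \ref{cor:minuscule_neat}, both of which are implicit in the paper's argument.
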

\begin{proof} Take any indecomposable representation $V \in \mathcal{C}_{neat}$. 
 
 Let $K = \Ker(G \to GL(V))$, and let $G' = G/K$. The supergroup $G'$ is also quasi-reductive: its even part $G'_{\bar 0}$ is a quotient of the reductive group $G_{\bar 0}$, so it is also reductive.
 
 Now, $V$ has a natural structure of an indecomposable faithful representation of $G'$
 on which \InnaC{every} $x \in \mathrm{Lie}(G')_{\bar 1}$ is neat. By Lemma 
\ref{lem:neat_criterion}, we obtain: $\mathrm{Lie}(G')_{\bar 1} = 
\mathrm{Lie}(G')_{neat}$.
 
 Hence by Proposition \ref{prop:quasi_red_semisimple}, the category $\Rep(G')$ is semisimple. 
 
 This implies that $\dim V \neq 0$, since any non-zero indecomposable object in a semisimple tensor category has non-zero dimension.
\end{proof}

\begin{remark} If we drop the assumption that $G$ is quasi-reductive then it is not true in general that the dimension of any indecomposable object in $\mathcal C_{neat}$ is not $0$.
  For example, consider the supergroup $G=\G\times\G^{(1|1)}$ and $V=M_2\oplus \Pi M_2$ as a module over $\g_a^{(1|1)}$. Let $\{v_1,v_2,v_3\}$ and $\{w_1,w_2,w_3\}$ be bases of $M_2$ and $\Pi M_2$ respectively with action of odd generator ${X}$
  given by ${X}(v_i)=v_{i+1}$ and ${X}(w_i)=w_{i+1}$ for $i=1,2$ and ${X}(v_3)={X}(w_3)=0$. Define the action of {a generator} $y\in\mathrm{Lie}(\G)$ on $V$ by $$y(v_1)=w_2,\ y(v_2)=w_3,\ y(v_3)=y(w_1)=y(w_2)=y(w_3)=0.$$
  Then $V$ is an indecomposable $G$-module of superdimension $0$.
\end{remark}
\begin{remark}
  Let $G$ be an algebraic supergroup, and let $\widetilde{G}$ be the fundamental (super)group of the super-Tannakian category $\Rep(G)$. The algebraic supergroup $\widetilde{G} $ is isomorphic to the semidirect
  product $\mu_2\rtimes G$. If $\widetilde{G} \cong \mu_2 \times G$ then there is a homomorphism $\varepsilon:\mu_2\to G$ and we have a decomposition of abelian categories $$\Rep(G) \cong\Rep(G, \eps) \oplus \Pi \Rep(G, \eps).$$

  Consider the full Karoubi additive subcategory $\mathcal{C}'_{neat} := \mathcal{C}_{neat} \cap \Rep(G, \eps)$ of $\mathcal{C}_{neat}$. Taking its semisimplification $\overline{\mathcal{C}'_{neat}}$, the same argument as in the proof of Proposition \ref{prop:red_envelopes} shows that $\overline{\mathcal{C}'_{neat}} \cong \Rep(\overline{G}, \bar{\eps})$ for a reductive algebraic supergroup $\overline{G}$ and $\bar\eps: \mu_2 \to \overline{G}$. If the assumption of
  Proposition \ref{prop:red_envelopes} holds then there exists  a homomorphism $\phi:G \to \overline{G}$ such that $\phi \circ \eps = \bar{\eps}$.
  
  In particular, for $G = G_{\bar{0}}$ purely even and $\eps$ the trivial morphism, $\overline{G} \cong G^{red}_{\bar{0}}$ and $\overline{\mathcal C'_{neat}}$ is equivalent to $\Rep_{\Vect}(G^{red}_{\bar{0}})$.

  If $G = \G^{(1|1)}$ or $\mathbb{M}$, then $\widetilde G$ does not split into direct product of $G$ and $\mu_2$ and hence the abelian tensor category $\Rep(G)$ does not have splitting. However, the Karoubian category
  $\mathcal C_{neat}$ has a splitting
  $$\mathcal C_{neat}=\mathcal C_{neat}'\oplus \Pi\mathcal C_{neat}',$$
  where $\mathcal C'_{neat}$ is the Karoubian tensor subcategory generated by $\Pi M_{2}$. Then $\overline{\mathcal C'_{neat}}$ is equivalent to
$\Rep(OSp(1|2),\eps)$ where $\eps$ is the isomorphism of $\mu_2$ with the center of $OSp(1|2)_{\bar 0}$.
\end{remark}

\end{document}